\newcommand{\commentout}[1]{}
\newcommand {\Chi} {{\bf \raise 2pt \hbox{$\chi$}} }
\newcommand {\f}   {\frac}
\newcommand {\p}   {\partial}
\newcommand{\dis}{\displaystyle}
\newcommand{\beq}{\begin{equation}}
\newcommand{\eeq}{\end{equation}}
\newcommand{\bea} {\begin{array}{rl}}
\newcommand{\eea} {\end{array}}
\newcommand{\bepa}{\left\{ \begin{array}{l}}
\newcommand{\eepa} {\end{array}\right.}
\newcommand{\vitro}{{\it vitro}}
\newcommand{\vivo}{{\it vivo}}
\newtheorem{theorem}{Theorem}[section]
\newcommand{\qed}{{ \hfill
                       {\unskip\kern 6pt\penalty 500 \raise -2pt\hbox{\vrule\vbox to 6pt{\hrule width 6pt
                       \vfill\hrule}\vrule} \par}   }}
\title{\Large \bf An accurate front capturing scheme for tumor growth models with a free boundary limit}
\author{Jian-Guo Liu\thanks{Department of Mathematics and Department of Physics, Duke University} \and
  Min Tang\thanks{Department of mathematics, Institute of Natural Sciences and MOE-LSC. Shanghai Jiao Tong University}
\and  Li Wang\thanks{Department of Mathematics, Computational and Data-Enabled Science 
and Engineering Program, State University of New York at Buffalo} 
\and Zhennan Zhou \thanks{Department of Mathematics, Duke University}
}
\date{\today}
\begin{document}
\maketitle
\pagestyle{plain}
\begin{abstract}
We consider a class of tumor growth models under the combined
effects of density-dependent pressure and cell multiplication, with a free boundary model as its singular limit when the pressure-density relationship becomes highly nonlinear. In particular, the constitutive law connecting pressure $p$ and density $\rho$ is $p(\rho)=\frac{m}{m-1} \rho^{m-1}$, and when $m \gg 1$, the cell density $\rho$ may evolve its support due to a pressure-driven geometric motion with sharp interface along the boundary of its support. The nonlinearity and degeneracy in the diffusion bring great challenges in numerical simulations, let alone the capturing of the singular free boundary limit. Prior to the present paper, there is lack of standard mechanism to numerically capture the front propagation speed as $m\gg 1$. In this paper, we develope a numerical scheme based on a novel prediction-correction reformulation that can accurately approximate the front propagation even when the nonlinearity is extremely strong. We show that the semi-discrete scheme naturally connects to the free boundary limit equation as $m \rightarrow \infty$, and with proper spacial discretization, the fully discrete scheme has improved stability, preserves positivity, and implements without nonlinear solvers. Finally, extensive numerical examples in both one and two dimensions are provided to verify the claimed properties and showcase good performance in various applications.  
\end{abstract}

\noindent {\bf Key-words:}  Hele-Shaw equation;  free
boundary problem; front capturing scheme; tumor growth model; prediction-correction method 
\\
\noindent {\bf Mathematics Subject Classification} 35K55; 35B25;
76D27; 92C50.

\pagenumbering{arabic}

\section{Introduction} 

In this paper, we are concerned with a tumor growth model for the cell density function $\rho(\mathbf x,t)$, whose governing equation is given by:
\begin{equation}\label{eq:n}
\frac{\partial}{\partial t} \rho - \nabla \cdot \left( \rho \nabla p(\rho) \right)=\rho G(c,p), \quad \mathbf x\in \mathbb R^d, \, t\ge0,
\end{equation}
where $c(\mathbf x,t)$ is a nutrient concentration which may depend on the cell density function \cite{BOBAM,PTV}. The tumor cells are transported due to the spacial availability, which is described by the negative gradient of a pressure function, and  they proliferate with a growth rate that depends on both the nutrient concentration and the pressure. The pressure depends on the cell density through the state equation \begin{equation}
\dis p(\rho) = \frac{m}{m-1} \rho^{m-1}\,.
\label{eq:p}
\end{equation}and other forms as in \cite{TVCVDP} are possible. 
 $G(c,p)$ represents the growth factor, which is assumed to be monotonic increasing in the $c$ variable, monotonic decreasing in the $p$ variable and may take negative values, i.e.,
\[
\frac{\partial}{\partial c}G(c,p)\ge 0,\quad \frac{\partial}{\partial p}G(c,p)\le 0, \quad  G(\bar c,p)=0, \quad \mbox{for}\, \,\mbox{some} \, \,\bar c>0.
\]
Without loss of generality, we denote by $c_b$ the maximum nutrient concentration that the environment can provide, and we assume that the following condition is satisfied
\begin{equation}\label{est:cbound}
0 \le c(\mathbf x,t) \le c_b, \quad \quad \mathbf x\in \mathbb R^d,  \quad \forall t \ge 0.
\end{equation}
In Section \ref{sec:num}, the nutrient models in numerical tests will be specified, which satisfy the boundedness condition \eqref{est:cbound} according to \cite{LTWZ,PQV}.

With the uniform boundedness assumption on $C$, we further assume there is there exists a constant $P$ such that, $G(c_b,P)=0$, which prevents the pressure $p$ exceeding this limit.
Accordingly, we complement this system with an initial  condition that satisfies
\begin{equation} \label{hypInit} \left\{�\begin{array}{l}
\rho(0,x)=\rho^{ini}(x) > 0, \qquad  \rho^{ini}\in L^1\cap L^\infty,
\\[6pt]
p^{ini}=\f{m}{m-1}(\rho^{ini})^{m-1} \leq P.
\end{array} \right.\end{equation}


It has been proved in \cite{PQV} that, when $m \to \infty$, the solution of \eqref{eq:n} converges to the solution of a free boundary problems supported on $\Omega(t)$, which is a set on which $p_\infty >0$. The geometric motion of $\Omega(t)$ is governed by the limiting pressure $p_\infty$. To understand this, we first write down the equation for $p$. Multiplying equation \eqref{eq:n} by $p'(\rho)$, it becomes
\[
\f{\p}{\p t} p= \rho p'(\rho) \Delta p + |\nabla p|^2  
+p'(\rho)\rho G(c,p)\,,
\]
and since $p= \frac{m}{m-1} \rho^{m-1}$, we find
\begin{equation}    \label{eq:pk}
\f{\p}{\p t} p= (m-1) p \Delta p + |\nabla p|^2 + (m-1)p G(c,p)\,.
\end{equation}
Sending $m \rightarrow \infty$ in the above equation, we formally have the following `complementary relation':
\beq 
p_\infty \big(\Delta p_\infty+G(c,p_\infty) \big)= 0\,.
\label{eqp1}
\eeq
The normal velocity of the boundary $\partial \Omega$ is $v=-\nabla p_\infty \cdot \hat n$, where $\hat n (x,t)$ is the unit outer normal direction on the boundary. 

We point out that in the limit of $m \rightarrow \infty$, the constitutive law \eqref{eq:p} gets lost, that is, $\rho_\infty$ and $p_\infty$ no longer satisfy the relation \eqref{eq:p}. Instead, $\rho_\infty$ jumps from $0$ to $1$ across the boundary $\partial \Omega$, and if starts with a characteristic function, it maintains as a characteristic function of $\Omega (t)$ along dynamics, and $p_\infty$ satisfies 
\beq
p_\infty \in P_\infty(\rho) = \left\{\begin{array}{ll} 
0, & 0\leq \rho_\infty < 1, \\[2mm]
[0,\infty),\qquad & \rho_\infty=1. 
\end{array}
\right.
\label{eq:graph}
\eeq
This kind of solution behavior are also considered in the congested  crowd  transport models, or the congested aggregation models \cite{AKY,Degond,CKY}.


In term of numerics, the difficulties are two-fold. On one hand, it is well known that the solutions of degenerate parabolic equations may have no classical spatial derivatives at a subset of the domain, the profile of the solution may have sharp interfaces near its support and the boundary of the support propagates with a finite speed \cite{Vazquez}. Moreover, the solution to the reaction-diffusion equation \eqref{eq:n} have a {\it time-dependent} support. Due to the lack of smoothness at the sharp interfaces, prevailing parabolic solvers may lose the convergence order for degenerate problems, which may even result in incorrect propagation speed of the boundary of the density support.  Many numerical methods have been proposed for the simulations of degenerate parabolic equations, including the finite element method \cite{AWZ,BCW}, finite volume scheme \cite{BF,EGHM}, finite difference method \cite{KRT,LSZ}, relaxation scheme which exhibits the merit of the Jin-Xin relaxation model \cite{JinXin,NPT}, discontinuous galerkin method \cite{ZW}, or some approach based on perturbation and regularization \cite{PY}. However, to the best of our knowledge, no existing numerical methods have ever investigate the possibility of preserving the free boundary limit of the degenerate reaction-diffusion equation.

 On the other hand, the nonlinearity introduced by large $m$ also brings severe numerical challenges. In theory, there are two ways to handle the nonlinearity. One is to use a fully implicit scheme for the nonlinear terms and solve the resulting discrete nonlinear system by some iterative method. However, as $m$ increases, the growing multiplicity and the stiffness of the Jacobi matrix of the resulting algebraic equation makes the implementation of iterative methods infeasible. The other one is to treat the nonlinear diffusion term semi-implicitly as in \cite{LTWZ}. With refined spatial grids and time steps, it has been shown that the numerical method in \cite{LTWZ} gives accurate approximation to the profile of the density. However, if the spacial grid and the time steps are not resolved enought, either the scheme is not stable or the numerical front position deviates from its true location. In particular, it has been observed that, a parabolic type of CFL conditions are necessary to guarantee accurate numerical approximation, and the constraints are more severe as $m$ increases.

The goal of this present paper is to develop an efficient numerical method for the nonlinear reaction-diffusion equation \eqref{eq:n} to overcome those difficulties. Especially, we want to show that the proposed scheme has improved stability constraints and, more importantly,  it effectively preserves the free boundary limit as $m\rightarrow \infty$.


Since the nonlinear diffusion term causes the primary challenge in numerical simulations,
to illustrate the novelty of our scheme, we consider a family of  diffusion equations of the form 
\beq\label{eq:hrho}
\p_t\rho=\Delta h_{m}(\rho)\,.
\eeq
Here, $\{h_{m}(\cdot)\}_{m \in I}$ stand for a family of nonlinear functions, and $I$ is the index set.  For example, when $h_m(\rho)=\rho^m$ and $I=\{m \in \mathbb R,m>1\}$, it recovers the porous media equation \cite{Vazquez}, or the tumor growth model \eqref{eq:n} when $G\equiv 0$. The nonlinearity changes in $h_m$ as $m$ varies.

Our new perspective starts with deriving the {\it equivalent velocity equation} for the cell density model. Specifically, we rewrite the original nonlinear diffusion equation \eqref{eq:hrho} as
\[
\p_t\rho=\nabla\cdot \left(\rho\frac{h'_m(\rho)}{\rho}\nabla\rho \right) :=-\nabla\cdot(\rho \mathbf{u})\,,
\]
where $\mathbf{u}$ satisfies  
\[
\mathbf{u}=- g_m(\rho) \nabla_\mathbf{x} \rho\,, \quad g_m(\rho)= \frac{h'_m(\rho)}{\rho}\,
\]
can be considered as velocity. 

As written, the new variable $\mathbf{u}$ suggests a way to single out the velocity from the density equation. Given the fact that velocity field plays the major role in expanding the front at the correct speed, we come up with the following new system that evolves the cell density and the velocity field simultaneously if they are initialized with compatible profiles:
\begin{align*}
\begin{cases}{}
&\partial_t\rho+\nabla\cdot(\rho \mathbf{u}) =0, \\
&\partial_t \mathbf{u}+\nabla \big(g_m(\rho)\nabla \cdot (\rho \mathbf{u})\big)=0.
\end{cases}{}
\end{align*}
Here the equation for $\mathbf{u}$ is obtained by
multiplying both sides of the equation for $\rho$ by $g_m(\rho)=f_m'(\rho)$ and taking the derivative with respect to $\mathbf{x}$.

However, as we shall show in Section~\ref{reform}, this system is not stable in the sense that perturbations in the constitutive relationship $\mathbf{u}= - g_m(\rho) \nabla_\mathbf{x} \rho $ does not decay in time. For this reason, we propose the following relaxation system 
\begin{equation} \label{eq:relax}
\left\{ 
\begin{aligned}
&\partial_t\rho+\nabla\cdot(\rho \mathbf{u}) =0,  \\
&\partial_t \mathbf{u}+\nabla \big(g_m(\rho)\nabla \cdot (\rho \mathbf{u})\big)=- \frac{1}{\epsilon^2} (\mathbf{u}+ g_m(\rho) \nabla \rho), 
\end{aligned} \right.
\end{equation}
where $\epsilon \ll 1$ is the relaxation parameter. Since we artificially expand the size of the system of equation, and the extra relaxation term in the above system will reinforce the constitutive relation and thus  helps to stabilize the discrepancy between the auxiliary quantity $\mathbf{u}$ and its consistent representation $-\nabla_\mathbf{x} g_m(\rho) $.

We shall show in the paper that, as $\epsilon \rightarrow 0$, the relaxation system \eqref{eq:relax} leads to a system of augmented differential algebraic equations (DAEs), whose numerical method can be understood as a prediction correction method. And for the tumor growth model, the proposed numerical method for the augmented DAEs is compatible with the free boundary limit of the cell density model, and is shown to capture the correct front speed of the moving boundaries.

The rest of the paper is organized as follows. The relaxation system of the cell density model are formulated and analyzed in Chapter 2. We also show augmented DAEs as the limiting system of the relaxation system. In Chapter 3, we propose the prediction correction method of the augmented DAEs in the semi-discrete level, and proves how it connects to the free boundary limit---the Hele-Shaw model.  The fully discrete scheme is presented in Chapter 4, along with some further numerical analysis results. At last, in Chapter 5 we confirm the properties of our numerical method with numerous test examples, and we also explore the applications of the scheme for a few extended models.



\section{Relaxation system and the prediction correction formulation} \label{reform}

\subsection{Reformulating the $\mathbf u$ equation}

In this section, we  focus on the density equation \eqref{eq:n}, and formulate a relaxation system out of it, which provides a superior framework for accurate numerical approximations. 
As briefly mentioned in the introduction, let us first introduce the velocity field that transports the cell population: 
\begin{equation}\label{eqn:urho}
 \mathbf{u}=-\nabla p(\rho)= -\f{m}{m-1}\nabla \rho^{m-1}\,,
\end{equation}
then the equation for $\rho$ reads
\begin{equation}\label{eqn:conv}
\p_t \rho+\nabla \cdot (\rho  \mathbf{u} )= \rho G(c)\,.
\end{equation}
Obviously, together with \eqref{eqn:urho}, the reformulation \eqref{eqn:conv} is as difficult to solve numerically as the original one. However, as discussed in the introduction, one can derive an equation for $\mathbf{u}$ from the equation for $\rho$ and evolve the cell density and the velocity field simultaneously. When $\rho$ and $\mathbf{u}$ are initialized with compatible profiles, the following transport system is considered instead: 
\begin{equation} \label{eq:convection}
\left\{\begin{aligned} 
&\p_t \mathbf{u}=m\nabla \Big(\rho^{m-2}\big(\nabla \cdot (\rho \mathbf{u})- \rho G(c)\big)\Big),\\
&\p_t \rho+ \nabla \cdot (\rho  \mathbf{u} )= \rho G(c),\\
&\mathbf{u}(x,0)=-\f{m}{m-1}\nabla \rho^{m-1}(x,0).\\
\end{aligned}
\right.	
\end{equation}
Note that here the first two equations are equivalent to the original cell density only when the last equation is satisfied. Indeed, if we introduce the discrepancy term $$W=\mathbf{u}+\f{m}{m-1}\nabla \rho^{m-1}\,,$$ it is straightforwardly to verify that
\[
\p_t W(x,t) =0, \quad W(x,0)=0.
\]
Therefore, we conclude $W(x,t)=0$, and the consistency condition \eqref{eqn:urho} is always satisfied. 

However, when solving the system \eqref{eq:convection} numerically, the discretization error destroys the relation \eqref{eqn:urho}. In fact, $W$ deviates from $0$ at every time step, and since there is no damping in the $W$ equation, the local truncation error accumulates and leads to a large global error. It is worth emphasizing that, $W$ is not a direct quantity of interest, however, it determines whether the transport system is an equivalent reformulation of the original density equation  \eqref{eq:n}. 

To illustrate the propagation of the discrepancy when numerical error is present, we add a small perturbation to \eqref{eq:convection}:
\begin{equation} \label{eq:convectiondelta}
\left\{\begin{aligned} 
&\p_t \mathbf{u}=m\nabla \Big(\rho^{m-2}\big(\nabla \cdot (\rho \mathbf{u})- \rho G(c)\big)\Big)+\delta_1,\\
&\p_t \rho+ \nabla \cdot (\rho  \mathbf{u} )= \rho G(c)+\delta_2,\\
&\mathbf{u}(x,0)=-\f{m}{m-1}\nabla \rho^{m-1}(x,0).\\
\end{aligned}
\right.	
\end{equation} 
Here, $\delta_1$, $\delta_2$ are small perturbation functions (in numerical schemes, they are just local truncation errors). Then by direct calculation, we get 
\[
\p_tW(x,t)=\delta_1+m\nabla(\rho^{m-2}\delta_2).
\]
Clearly,  $\|W(\cdot,t) \|_{L^\infty}$ may increase linearly in time, which breaks the consistency condition \eqref{eqn:urho} in the transport system \eqref{eq:convection}. In particular, $\|W(\cdot,t) \|_{L^\infty}$ may increase faster when $m$ is large.

\subsection{The relaxation system and its properties}

To ensure the consistency condition, we instead consider the following equation for the discrepancy with an artificial damping
\[
\p_t W(x,t) =- \frac{1}{\epsilon^2} W(x,t), 
\]
where $0<\epsilon \ll 1$ is the relaxation constant. 
This discrepancy equation leads to the following relaxation system
\begin{equation} \label{eq:convection2}
\left\{\begin{aligned} 
&\p_t \mathbf{u}=m\nabla \Big(\rho^{m-2}\big(\nabla \cdot (\rho \mathbf{u})- \rho G(c)\big)\Big) - \frac{1}{\epsilon^2} \Big( \mathbf{u}+\f{m}{m-1}\nabla \rho^{m-1}\Big), \\
&\p_t \rho+ \nabla \cdot (\rho  \mathbf{u} )= \rho G(c),\\
&\mathbf{u}(x,0)=-\f{m}{m-1}\nabla \rho^{m-1}(x,0).\\
\end{aligned}
\right.	
\end{equation}
Correspondingly, the relaxation system with perturbations is given by
\begin{equation} \label{eq:convection2p}
\left\{\begin{aligned} 
&\p_t \mathbf{u}=m\nabla \Big(\rho^{m-2}\big(\nabla \cdot (\rho \mathbf{u})- \rho G(c)\big)\Big) - \frac{1}{\epsilon^2} \Big( \mathbf{u}+\f{m}{m-1}\nabla \rho^{m-1}\Big)+\delta_1, \\
&\p_t \rho+ \nabla \cdot (\rho  \mathbf{u} )= \rho G(c)+\delta_2, \\
&\mathbf{u}(x,0)=-\f{m}{m-1}\nabla \rho^{m-1}(x,0),\\
\end{aligned}
\right.	
\end{equation}
where again, $\delta_1$, $\delta_2$ are small perturbation functions that may represent local truncation errors from numerical schemes.

As we shall see below, the advantage of the relaxation system \eqref{eq:convection2} can be seen in two ways. First of all, when $\epsilon$ is small, the consistency condition \eqref{eqn:urho} is effectively preserved along the dynamics even in the presence of perturbations. Secondly, the relaxation term in \eqref{eq:convection2p} can be shown of the same order as local perturbations, so that even with perturbations the $\mathbf u$ equation in \eqref{eq:convection2p} still well approximates the $\mathbf u$ equation in \eqref{eq:convection}. This property is important as the accuracy in approximating  $\mathbf{u}$ determines the accuracy of the front position. In particular, in the absence of perturbation, the relaxation term is exactly $0$.

More specifically, we show in the following that 1) when $\epsilon\to 0$, $W(x,t)\to 0$ for $t \ge0$, which indicates the preservation of the consistency condition \eqref{eqn:urho}; 2) the relaxation terms in \eqref{eq:convection2} $\frac{W(x,t)}{\epsilon^2}$ is of the same order as the perturbations.

For \eqref{eq:convection2p}, we find that $W(x,t)$ satisfies 
\begin{equation*} \label{eq:Wrelax}
\p_tW(x,t)=-\frac{1}{\epsilon^2}W(x,t)+\delta_1+m\nabla(\rho^{m-1}\delta_2)\,,
\end{equation*} 
whose solution can be represent as follows
\[
W(x,t)=\exp\left(-\frac{t}{\epsilon^2}\right)W(x,0)+\int_0^t\exp\left(-\frac{t-\tau}{\epsilon^2}\right)\big(\delta_1(\tau)+m\nabla(\rho^{m-1}\delta_2)(\tau)\big)\,d\tau \,.
\]
Then, with integration by parts, we get
$$\begin{aligned}
\frac{W(x,t)}{\epsilon^2}=&\frac{1}{\epsilon^2}\exp\left(-\frac{t}{\epsilon^2}\right)\big(W(x,0)-\epsilon^2 \delta_1(0)-\epsilon^2 m\nabla(\rho^{m-1}\delta_2)(0)\big)+\delta_1(t)+m\nabla(\rho^{m-1}\delta_2)(t)\\
&-\int_0^t\exp\left(-\frac{t-\tau}{\epsilon^2}\right)\p_\tau\big(\delta_1(\tau)+m\nabla(\rho^{m-1}\delta_2)(\tau)\big)\,d\tau,
\end{aligned}
$$which indicates that when $\epsilon\to 0$, 
\[
\f{W(x,t)}{\epsilon^2}= \delta_1(t)+m\nabla(\rho^{m-1}\delta_2)(t)+O(\epsilon^2).
\]
Obviously, $\f{W(x,t)}{\epsilon^2}$ is at the same order of $\delta_1$ and $\delta_2$ for small $\epsilon$, and when $\epsilon\to 0$, $W(x,t)\to 0$ for $t \ge0$.

 
We shall further explore the role of the relaxation term by the Chapman-Enskog expansion below similar to what is done in \cite{JinXin}.
Conducting the Chapman-Enskog expansion to the relaxation system \eqref{eq:convection2}, we get
$$\begin{aligned}
\mathbf{u}&=-\frac{m}{m-1}\nabla\rho^{m-1}-\epsilon^2\p_t \mathbf{u}+\epsilon^2 m\nabla \Big(\rho^{m-2}\big(\nabla \cdot (\rho \mathbf{u})- \rho G(c)\big)\Big) \\
&=-\frac{m}{m-1}\nabla\rho^{m-1}-\epsilon^2\p_t\Big(-\frac{m}{m-1}\nabla\rho^{m-1}-\epsilon^2\p_t \mathbf{u}+\epsilon^2 m\nabla \Big(\rho^{m-2}\big(\nabla \cdot (\rho \mathbf{u})- \rho G(c)\big)\Big)\Big)\\
&\qquad +\epsilon^2 m\nabla \Big(\rho^{m-2}\big(\nabla \cdot (\rho \mathbf{u})- \rho G(c)\big)\Big)\\
&=-\frac{m}{m-1}\nabla\rho^{m-1}+\epsilon^4\p_t\Big(\p_t\mathbf{u}-m\nabla \Big(\rho^{m-2}\big(\nabla \cdot (\rho \mathbf{u})- \rho G(c)\big)\Big).
\end{aligned}
$$
If we continue the calculations, formally, all higher order terms will cancel each other, which implies that \eqref{eq:convection2} is equivalent to \eqref{eq:n}, not only to the leading order, but also at each higher order. This is not surprising since analytically we always have the consistency condition \eqref{eqn:urho} and the extra relaxation term is simply zero, and thus \eqref{eq:convection2} is equivalent to \eqref{eq:convection}. On the other hand, when solving \eqref{eq:convection} numerically, the consistency condition is no longer satisfied with full accuracy, and the relaxation term in \eqref{eq:convection2} will add extra contributions to counteract the perturbations caused by numerical approximations so that it reinforces the consistency relation between $u$ and $\rho$ without adding any dissipation. 
Therefore, the relaxation term in \eqref{eq:convection}, although remaining zero analytically, plays an crucial role in  numerical approximation.  



\subsection{The prediction-correction method}
Based on the analysis in the previous sections, we choose to solve \eqref{eq:convection2} instead and propose the following time splitting method:
\begin{equation}\label{eqn:dae1}
\left\{\begin{aligned}
&\p_t \rho+\nabla \cdot (\rho  \mathbf{u} )=\rho G(c),\\
&\p_t u=m\nabla  \Big(\rho^{m-2}\big(\nabla \cdot (\rho \mathbf{u})-\rho G(c)\big)\Big),
\end{aligned}
\right.	
\qquad
\left\{\begin{aligned}
&\p_t\rho=0,\\
&\p_t \mathbf{u}=-\frac{1}{\epsilon^2}(\mathbf{u}+\f{m}{m-1}\nabla \rho^{m-1})\,.
\end{aligned}
\right.
\end{equation}
At every time step, given $(\rho^n,\mathbf{u}^n)$, one solves the left system in \eqref{eqn:dae1} for one time step $\Delta t$ and obtains the intermediate values $(\rho^{*}, \mathbf{u}^{*})$, and then solve the second system of equations in \eqref{eqn:dae1} to get $(\rho^{n+1},\mathbf{u}^{n+1})$.

When $\epsilon\to 0$, the second step in \eqref{eqn:dae1} reduces to 
\begin{equation}\label{eqn:dae2}
\p_t\rho=0,\qquad
\mathbf{u}(x,t)=-\f{m}{m-1}\nabla \rho^{m-1}(x,t),
\end{equation}
which can be understood as a projection step. Note that in the projection step, $\rho$ is a constant in time, namely $\rho^*=\rho^{n+1}$.   Therefore, the time splitting method for the fully relaxed system ($\epsilon=0$) becomes: 
\begin{equation}\label{eqn:dae3}
\left\{\begin{aligned}
&\p_t \rho+\nabla \cdot (\rho  \mathbf{u} )=\rho G(c),\\
&\p_t \mathbf{u}=m\nabla  \Big(\rho^{m-2}\big(\nabla \cdot (\rho \mathbf{u})-\rho G(c)\big)\Big),
\end{aligned}\qquad
\mathbf{u}(x,t)=-\f{m}{m-1}\nabla \rho^{m-1}(x,t).
\right.		
\end{equation}
We use the first two equations to numerically evolve the systems  while taking the last equation as a constraint, which induces the following prediction-correction method: at each time step, we first solve the left two equations in \eqref{eqn:dae3}  from $\left(\rho^n, \mathbf{u}^n\right)$ to obtain $\left(\rho^{n+1}, \mathbf{u}^*\right)$ (here in $\rho$ equation, the convection term is treated semi-implicitly, i.e., $\nabla\cdot (\rho^n u^*)$), and then use the right equation to enforce $\mathbf{u}^{n+1}=-\f{m}{m-1}\nabla (\rho^{n+1})^{m-1}$. Here $\mathbf{u}^*$  is an intermediate value which is only used to help compute $\rho^{n+1}$ better. Actually, $\mathbf{u}^*$ can be a good approximation to $\mathbf{u}(t^{n+1})$, but, $\mathbf{u}^*$ and $\rho^{n+1}$ may not satisfy the consistency condition \eqref{eqn:urho}. 

We remark that, the introduction of $\mathbf{u}^*$ gives us the freedom to solve for $\rho^{n+1}$ stably and accurately without worrying about the  constraint \eqref{eqn:urho}. Hence, $\mathbf{u}^*$ can be viewed as a prediction, which is corrected by the consistency condition.  The correction is essentially a projection onto the solution manifold (as shown in Figure~\ref{fig:scheme}), which is not carried out on $\mathbf{u}^*$ directly, but by using $\rho^{n+1}$ and the explicit consistency condition \eqref{eqn:urho}. It is worth mentioning that similar projection ideas have been introduced to other equations in numerical simulations, like the incompressible flows (\cite{Chorin1,Chorin2,HJL}) and the Landau-Lifshitz equation (\cite{CLZ,WGE}).
\begin{figure}[ht]
\centering 
\includegraphics[scale=0.6]{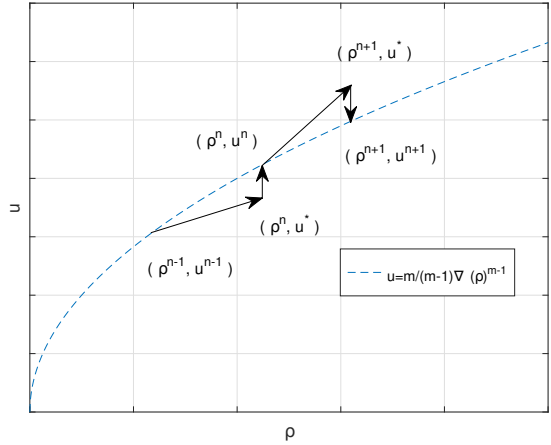}
\caption{Schematic plot of the prediction-correction numerical simulation of the augmented differential algebraic equations \eqref{eqn:dae3}. The correction is only done on the u component by projecting onto the solution manifold.}
\label{fig:scheme}
\end{figure}
%

\section{Time discretization and connections to the Hele-Shaw model}
In this section, we aim to propose a semi-discrete form of \eqref{eqn:dae3} that yields a good stability condition. Note that 
although in the prediction step of \eqref{eqn:dae3}, the two equations are linear respectively in $\rho$ and $u$, they are still nonlinear equations. If we naively update $\rho$ and $u$ explicitly there, it is equivalent to explicitly updating original equation \eqref{eq:n}, which is subject to severe stability constraints that highly depend on the strength of the nonlinearity encoded by $m$. Instead, we consider an implicit-explicit discretization for \eqref{eqn:dae3} as follows: 
\begin{align}
\f{\mathbf{u}^{n*}-\mathbf{u}^n}{\Delta t}&=m\nabla \Big((\rho^{n})^{m-2}\big(\nabla \cdot  (\rho^n \mathbf{u}^{n*})-\rho^n G(c^{n},p(\rho^n))\big)\Big) \,, \label{eq:ustar}\\
\f{\rho^{n+1}-\rho^n}{\Delta t}&=-\nabla \cdot (\rho^n \mathbf{u}^{n*})+\rho^{n+1} G(c^{n},p(\rho_n)) \,,\label{eq:rhonew} \\
\mathbf{u}^{n+1}&=-\f{m}{m-1}\nabla (\rho^{n+1})^{m-1} \,. \label{eq:ustar0}
\end{align}
In this scheme, each equation  can be solved consecutively, and each of them is only semi-implicit, which means that no nonlinear solver is needed in implementing the scheme.  For the ease of analysis, we assume $G$ is a constant in this section to conduct the numerical analysis. In the numerical examples, we shall instead consider more general time and spatial dependent $G$ in various examples.

\subsection{The connection to the Hele-Shaw model} \label{sec:heleshaw}


In the free boundary limit, $m\to\infty$, with properly prepared initial conditions, the density function $\rho$ behaves as a time dependent characteristic function with a sharp front, whose geometric motion is governed by the Hele-Shaw model for the pressure (see e.g.  \cite{PQV}). 


In order to examine the connection of the semi-discrete scheme to the Hele-Shaw model, we aim to show that $u^*$, computed from \eqref{eq:ustar} and used in \eqref{eq:rhonew} to propagate $\rho^{n+1}$, satisfies the Hele-Shaw model when $m \rightarrow \infty$, with $\Delta t$ fixed. 
 For $m\gg 1$, the right hand side of \eqref{eq:ustar} is dominating, and the leading order with respect to $m$ reads
\[
\vec 0=\nabla \Big((\rho^{n})^{m-2}\big(\nabla \cdot (\rho^n \mathbf{u}^{n*})-\rho^n G\big)\Big)= \nabla \Big((\rho^{n})^{m-1}  \nabla \cdot \mathbf{u}^{n*}+(\rho^{n})^{m-2} (\nabla \rho^n) \cdot \mathbf{u}^{n*}-(\rho^n)^{m-1} G \Big).
\]
By the definition of the pressure $p(\rho)$, and since $\mathbf{u}^{n}=-\f{m}{m-1}\nabla(\rho^{n})^{m-1} $ from the previous step, we obtain
\[
\vec 0= \nabla \Big( \frac{m-1}{m} p(\rho^n) \nabla \cdot \mathbf{u}^{n*} + \frac{1}{m} \mathbf{u}^n \cdot \mathbf{u}^{n*} - \frac{m-1}{m} p(\rho^n) G\Big).
\]
For $m \gg 1$, this effectively reduces to 
\[
\vec 0= \nabla \Big(  p(\rho^n) \big(\nabla \cdot  \mathbf{u}^{n*} - G\big) \Big),
\]
which indicates that there exists a scalar constant $a$ that is independent of space, such that
\[
p(\rho^n) \big(\nabla \cdot \mathbf{u}^{n} - G\big) = a.
\]
Note that, due to the degenerate diffusion, if we initially start with a density that has compact support, then $\rho$ as well as the pressure $p(\rho^n)$ remain compactly supported, which implies
\[
p(\rho^n) \big(\nabla \cdot \mathbf{u}^{n*} - G\big) = 0.
\]
Therefore, if we denote the support of $p(\rho^n)$ by $\Omega_n$, then the velocity field $\mathbf{u}^{*}$ satisfies
\begin{equation} \label{eq:ap}
\nabla \cdot  \mathbf{u}^{n*} - G=0, \quad x \in \Omega_n.
\end{equation}
%

{From \eqref{eq:ustar}, since $p(\rho^n)=0$ outside of $\Omega_n$, we have ${\mathbf{u}^{n*}}=\mathbf{u}^n$ for $x\in\mathbb{R}^d/\bar\Omega_n$. From \eqref{eq:ustar0} of the previous step, $\mathbf{u}^n$ is supported in $\Omega_n$, so it is with $\mathbf{u}^{n*}$.} The above limit is for the velocity field $\mathbf{u}^{n*}$, which is closely related to the limiting pressure. We further show that the velocity field can be written as the gradient of a scalar function, which plays the role of the pressure.

We observe that,  \eqref{eq:ustar} can be rewritten as
\[
{u^{n*}}=u^n +\nabla \Big( m{\Delta t}  (\rho^{n})^{m-2}\big(\nabla \cdot  (\rho^n \mathbf{u}^{n*})-\rho^n G\big)\Big), 
\]
and recall $\mathbf{u}^n = -\f{m}{m-1}\nabla (\rho^{n})^{m-1}$, then $\mathbf{u}^{n*}$ can also be written as the gradient of a scalar function, which we denote by $-p^{n*}$, namely $\mathbf{u}^{n*} =- \nabla p^{n*}$. Thus, \eqref{eq:ap} becomes
\[
-\Delta p^{n*} =G, \quad x \in \Omega_n.
\]

Next, we try to find the boundary condition for $p^{n*}$ on  $\partial\Omega_n$ by assuming that $p^{n*}$ is continuous across $\Omega_n$.  Due to the definition of $p^{n*}$ such that $\mathbf{u}^{n*} =- \nabla p^{n*}$ and the fact that $\mathbf{u}^{n\ast}$ is supported in $\Omega_n$,
we conclude that $p^{n*}$ is constant along $\partial \Omega_n$.  Then we have for some constant $d$,
\begin{equation} \label{eq:pstar}
\left\{ \begin{split}-\Delta p^{n*} =G, & \quad x \in \Omega_n, \\
p^{n*}=d,& \quad x \in \partial \Omega_n.
\end{split} \right.
\end{equation}
The solutions to \eqref{eq:pstar} yield the same $\mathbf{u}^{n\ast}=-\nabla p^{n\ast}$ for different constant $d$.  Especially, we can view $u^{n*}$ as $-\nabla p^{n*}_0$, where $p^{n*}_0$ solves the Hele-Shaw model
\begin{equation} 
\left\{ \begin{split}-\Delta p^{n*}_0 =G, & \quad x \in \Omega_n, \\
p^{n*}_0=0,& \quad x \in \partial \Omega_n.
\end{split} \right.
\end{equation}
Due to the convex profile of $p_0^{n*}$ within $\Omega_n$, although $p_0^{n*}$ is continuous in the whole space, $\nabla p^{n*}$ may have jumps across  $\Omega_n$, which reflects the jumps in $u^{n*}$ across  $\Omega_n$.  

 To sum up, for $m\gg 1$ and fixed $\Delta t$, due to \eqref{eq:ustar0} from the previous time step, and \eqref{eq:ustar}, the velocity field of the semi-discrete numerical scheme is the same as the velocity field given by the limiting Hele-Shaw model. Then the front propagates according to the velocity field $\mathbf{u}^{n*}$ from \eqref{eq:rhonew}, which is the same as the free boundary model.

\subsection{Stability analysis}
When $m>1$, it is difficult to analyze the stability of the semi-discrete scheme due to nonlinearity. However, for $m=2$, the nonlinearity in the degenerate diffusion is only quadratic, and the introduction of the velocity $\mathbf u$ makes stability analysis of the augmented DAEs \eqref{eq:ustar}--\eqref{eq:ustar0} feasible. Similar observation has been  made in a recent work \cite{CYZ}.
For simplicity, we still assume $G$ is a constant, and the extensions to more general $G$ functions will be discussed later in this section.

\textbf{Heuristic argument for $m>1$:} In obtaining $\mathbf{u}^{n*}$ from equation  \eqref{eq:ustar}, the right hand side is treated semi-implicitly: only the velocity on the right hand side is taken implicitly while the rest of the terms are still taken explicitly. Therefore, the stability may be compromised for efficiency. Especially, since $m\nabla(\rho^n G)$ is explicit, if $G$ is independent of space, we can rewrite \eqref{eq:ustar} into
$$\f{\mathbf{u}^{n*}-\mathbf{u}^n}{\Delta t}=m\nabla \big((\rho^{n})^{m-2}\nabla \cdot  (\rho^n \mathbf{u}^{n*})\big)-(m-1)G\mathbf{u}^n.  
$$
{Therefore, a necessary stability requirement for the equation to update $\mathbf{u}^{n*}$ \eqref{eq:ustar} is that the required $\Delta t$ linearly depends on $1/(m-1)$. }
Besides, in solving for $\rho^{n+1}$ from equation \eqref{eq:rhonew} with $\mathbf{u}^{n*}$ and $c^{n+1}$ given, the convection term on the right hand side is treated explicitly. Therefore, the time step $\Delta t$ needs to satisfy the regular hyperbolic CFL constraint.

However, due to the high nonlinearity of the system, it is impossible for us to derive the sufficient stability condition, but we shall numerically verify that, with proper spatial discretization, the resulting scheme is uniformly accurate in $m$ and the stability condition depends on $1/(m-1)$ linearly.

\textbf{Rigorous stability analysis for $m=2$: }
When $m=2$, the semi-discretization \eqref{eq:ustar}-\eqref{eq:ustar0} reduces to:
\begin{align}
\f{\mathbf{u}^{n*}-\mathbf{u}^{n}}{\Delta t}&=2\nabla \Big(\big(\nabla \cdot  (\rho^n \mathbf{u}^{n*})-\rho^n G\big)\Big)\,, \label{eq:us1}\\
\f{\rho^{n+1}-\rho^n}{\Delta t}&=-\nabla \cdot (\rho^n \mathbf{u}^{n*})+\rho^{n+1} G\,, \label{eq:rho1}\\
\mathbf{u}^{n+1}&=-2\nabla \rho^{n+1}\,. \label{eq:u1}
\end{align}
 Clearly, \eqref{eq:us1} is equivalent to
\begin{equation} \label{eq:us2}
\f{\mathbf{u}^{n*}-\mathbf{u}^n}{\Delta t} =2\nabla \nabla \cdot (\rho^n \mathbf{u}^{n*}) + \mathbf{u}^{n} G,
\end{equation}
where we have used $u^{n}=-2\nabla \rho^{n}$. Take the spatial gradient of \eqref{eq:rho1} and multiply by $-2$, we get
\begin{equation} \label{eq:us3}
\f{\mathbf{u}^{n+1}-\mathbf{u}^n}{\Delta t} =2\nabla \nabla \cdot   (\rho^n \mathbf{u}^{n*}) + \mathbf{u}^{n+1} G.
\end{equation}
Subtracting \eqref{eq:us2} from \eqref{eq:us3}, we see
\begin{equation} \label{eq:us4}
\f{\mathbf{u}^{n+1}-\mathbf{u}^{n*}}{\Delta t} =(\mathbf{u}^{n+1}-\mathbf{u}^n) G,
\end{equation}
which implies
\begin{equation} \label{rel:us}
\mathbf{u}^{n*}=\mathbf{u}^{n+1}-\Delta t G (\mathbf{u}^{n+1}-\mathbf{u}^n).
\end{equation}
It is important to note that this relation tells that $\mathbf{u}^{n*}$ is indeed a good prediction of $\mathbf{u}^{n+1}$ since their discrepancy is of order $O(\Delta t^2)$ when $\mathbf{u}^{n+1}-\mathbf{u}^n$ is at order $\Delta t$. 

Now multiplying both sides of \eqref{eq:rho1} by $\rho^{n+1}\Delta t$, integrating in space, one gets
\begin{equation} \label{est1}
\frac{1}{2} \| \rho^{n+1}\|^2- \frac{1}{2} \| \rho^{n}\|^2+\frac{1}{2} \| \rho^{n+1}-\rho^n\|^2 =-\Delta t \int \nabla \cdot (\rho^n \mathbf{u}^{n*}) \rho^{n+1} dx +  \Delta t G \| \rho^{n+1}\|^2.
\end{equation}Here $\|\cdot\|$ represents the $L^2$ norm.
If we assume that the scheme is positivity preserving, i.e., $\rho^n \ge 0$, then with integration by parts and \eqref{eq:u1}, we learn that
\[
-\int \nabla \cdot (\rho^n \mathbf{u}^{n*}) \rho^{n+1} dx = - \frac{1}{2} \int \rho^n \mathbf{u}^{n*}\cdot \mathbf{u}^{n+1} dx .
\]
Therefore, from \eqref{rel:us}, we obtain
\begin{equation} 
- \frac{1}{2} \int \rho^n \mathbf{u}^{n*}\cdot \mathbf{u}^{n+1} dx = -\frac{1}{2} \int \rho^n | \mathbf{u}^{n+1}|^2 d x + O(\Delta t) .
\end{equation}
Hence, we conclude that
\begin{equation} \label{est2}
\left(\frac{1}{2}-G \Delta t \right) \| \rho^{n+1}\|^2 \le \frac{1}{2} \| \rho^{n}\|^2 +O(\Delta t^2).
\end{equation}
This implies the following stability estimate for $\frac{1}{2}-G \Delta t>0$, 
\[
 \| \rho^{n}\|^2 \le \bar C e^{2GT}  \| \rho^{0}\|^2, \quad n\Delta t <T\,,
\]
where $\bar C$ only depends on the final time $T$.

In conclusion, we have proved the following  stability estimate
\begin{theorem}
For some $T>0$ and $G_{\max}>0$, when $G_{\max} \Delta t < \frac 1 2$, the solutions to the semi-discrete scheme \eqref{eq:us1} \eqref{eq:rho1} \eqref{eq:u1}  satisfy
\begin{equation}
 \| \rho^{n}\|_{L^2} \le C e^{G_{\max}T}  \| \rho^{0}\|_{L^2}, \quad \quad \forall \, \, n\Delta t <T,
\end{equation}
where the constant $C$ only depends on $T$ and $G_{\max}$.
\end{theorem}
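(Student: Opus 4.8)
The plan is to derive an $L^2$ energy inequality for the density iterates $\{\rho^n\}$ and then close it with a discrete Gronwall argument. The one structural fact that makes this possible for $m=2$ is that the predicted velocity $\mathbf u^{n*}$ appearing in the (only semi-implicit, hence a priori sign-indefinite) convection term of \eqref{eq:rho1} can be replaced, up to an $O(\Delta t^2)$ error, by the corrected velocity $\mathbf u^{n+1}=-2\nabla\rho^{n+1}$, after which the convection contribution becomes $-\tfrac12\int\rho^n|\mathbf u^{n+1}|^2\,dx\le 0$ and can simply be dropped.

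First I would establish the prediction--correction identity \eqref{rel:us}. Substituting $\mathbf u^n=-2\nabla\rho^n$ into \eqref{eq:us1} rewrites it as \eqref{eq:us2}; differentiating \eqref{eq:rho1} in space, multiplying by $-2$, and using $\mathbf u^{n+1}=-2\nabla\rho^{n+1}$ and $\mathbf u^n=-2\nabla\rho^n$ produces \eqref{eq:us3}; subtracting the two yields \eqref{eq:us4}, hence $\mathbf u^{n*}=\mathbf u^{n+1}-\Delta t\,G(\mathbf u^{n+1}-\mathbf u^n)$. This says precisely that $\mathbf u^{n*}$ is an $O(\Delta t^2)$-accurate stand-in for $\mathbf u^{n+1}$ as long as $\mathbf u^{n+1}-\mathbf u^n=O(\Delta t)$.

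Next, I would test \eqref{eq:rho1} with $\rho^{n+1}\Delta t$ and integrate over space, using $a(a-b)=\tfrac12(a^2-b^2+(a-b)^2)$ to obtain \eqref{est1}. The convection term on the right is integrated by parts and, via $\mathbf u^{n+1}=-2\nabla\rho^{n+1}$, becomes $-\tfrac12\int\rho^n\mathbf u^{n*}\cdot\mathbf u^{n+1}\,dx$ --- here I use positivity $\rho^n\ge0$, which is assumed in the statement and guaranteed by the spatial discretization of Chapter 4. Inserting \eqref{rel:us} splits this into $-\tfrac12\int\rho^n|\mathbf u^{n+1}|^2\,dx\le0$, which is discarded, plus a remainder of size $\Delta t\,G\int\rho^n(\mathbf u^{n+1}-\mathbf u^n)\cdot\mathbf u^{n+1}\,dx$. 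Discarding also $\tfrac12\|\rho^{n+1}-\rho^n\|^2\ge0$ on the left, one arrives at \eqref{est2}, i.e. $(\tfrac12-G\Delta t)\|\rho^{n+1}\|^2\le\tfrac12\|\rho^n\|^2+O(\Delta t^2)$.

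Finally, under $G_{\max}\Delta t<\tfrac12$ I would divide by $\tfrac12-G\Delta t>0$ to obtain a recursion of the form $\|\rho^{n+1}\|^2\le(1+CG_{\max}\Delta t)\|\rho^n\|^2+O(\Delta t^2)$ and iterate over $n\le T/\Delta t$ steps; the amplification factor gives $e^{CG_{\max}T}$ and the accumulated $O(\Delta t^2)$ terms sum to a bounded quantity, which together yield the claimed bound $\|\rho^n\|_{L^2}\le Ce^{G_{\max}T}\|\rho^0\|_{L^2}$. The hard part is controlling the remainders I called $O(\Delta t)$ and $O(\Delta t^2)$: the discarded term $\Delta t\,G\int\rho^n(\mathbf u^{n+1}-\mathbf u^n)\cdot\mathbf u^{n+1}\,dx$ involves $\nabla\rho^{n+1}$ and $\nabla\rho^n$, which are not controlled by $\|\rho\|_{L^2}$, so rigorously one needs an a priori (bootstrap) bound on $\|\mathbf u^{n+1}\|$ uniform in $n$, or to work in a regime where the exact solution is smooth so that $\mathbf u^{n+1}-\mathbf u^n=O(\Delta t)$ with a controlled constant; a minor additional point is tracking $G$ as $G_{\max}$ throughout when $G$ depends on space and time, which only changes the constants.
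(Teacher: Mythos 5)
Your proposal follows essentially the same route as the paper: derive the identity $\mathbf u^{n*}=\mathbf u^{n+1}-\Delta t\,G(\mathbf u^{n+1}-\mathbf u^n)$, test \eqref{eq:rho1} with $\rho^{n+1}\Delta t$, integrate by parts using positivity and $\mathbf u^{n+1}=-2\nabla\rho^{n+1}$ to make the convection contribution nonpositive up to an $O(\Delta t)$ remainder, and then iterate under $G_{\max}\Delta t<\tfrac12$. The technical caveat you raise about the uncontrolled $O(\Delta t)$ remainder is also left formal in the paper's own argument, so your write-up is faithful to (indeed slightly more candid than) the original proof.
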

 Obviously, the proof also works for spatially inhomogeneous $G$ as long as there exists a constant $G_{\max}$ such that
\[
0 \le G \le G_{\max} .
\] Although the theorem above does not necessarily imply the stability constraint of a fully discrete scheme, it indicates that our time discretization respects the stability estimate of the  original density model \eqref{eq:n}.
 


\section{The fully discrete scheme}
In this section, we introduce the spatial discretization to obtain the fully discrete scheme. We first construct the discretization for \eqref{eq:ustar}--\eqref{eq:ustar0} in one spatial dimension, and then extend it to 2D. Higher dimensional extensions should be similar. We will focus our attention exclusively on the discretization of $\rho$ and $\mathbf u$ by assuming that $G$ is given, and discretization for $c$ is straightforward, see \cite{LWZ}.  We also look into the scheme's property on positivity and establish a formal energy estimate.

\subsection{The fully discrete scheme in 1D}

Let $[a,b]$ be the computational domain, $\Delta x=(b-a)/N_x$ be the mesh size, and the grid points be 
$$
x_i=a+i\Delta x,\quad x_{i+1/2}=a+(i+1/2)\Delta x,\qquad
i\in\{0,1,\cdots,N_x-1\}\,.
$$
We use staggered grid for $u$ and regular grid for $\rho$. More precisely, $\rho$ takes cell averages on $[x_{i-\frac{1}{2}},x_{i+\frac{1}{2}}]$ and $u$ takes value at $x_{i+1/2}$, i.e., 
$$ \rho_i(t)=\frac{1}{\Delta x}\int_{x_{i-1/2}}^{x_{i+1/2}}\rho(x,t)\,dx,
\qquad u_{i+1/2}(t)= u(x_{i+1/2},t)\,.
$$  

First of all, the space discretization for $u^{n\ast}$ in \eqref{eq:ustar} is via the centered finite difference method, i.e., 
\begin{eqnarray}\label{sch:u1D}
\f{u_{i+1/2}^{n*}-u_{i+1/2}^n}{\Delta t}
&=&\frac{m}{\Delta x}\left\{ \left((\rho^{n}_{i+1})^{m-2}\left(\frac {\rho_{i+3/2}^n u_{i+3/2}^{n*}-
\rho_{i+1/2}^n u_{i+1/2}^{n*}}{\Delta x}-\rho_{i+1}^n G^n_i \right)\right) \right.  \nonumber \\
&& \left. \qquad-\left((\rho^{n}_i)^{m-2}\left(\frac {\rho_{i+1/2}^n u_{i+1/2}^{n*}-
\rho_{i-1/2}^n u_{i-1/2}^{n*}}{\Delta x}-\rho_{i}^{n} G^n_i \right)\right) \right\}  \,,
\end{eqnarray}
where  $G^n_i\approx G(x_i,n\Delta t)$ and the half grid values of $\rho$ are taken as the average of their two neighboring cells, i.e.
\[
\rho^n_{i+1/2}= \frac{ \rho^n_i +  \rho^n_{i+1}}{2}.
\]


After obtaining $u^{n*}$, equation \eqref{eq:rhonew} is just a linear hyperbolic equation for $\rho$ with a growth term, and we use central scheme to discretize it as in \cite{BF}. More specifically, we have 
$$
\frac{\rho^{n+1}_i-\rho^n_i}{\Delta t}+\frac{F^n_{i+1/2}-F^n_{i-1/2}}{\Delta x}= \rho_i^{n+1}G^n_i\,,
$$
where the flux $F^n_{i\pm 1/2}$ is given by
\begin{equation}\label{eq:flux1d}
F^n_{i\pm1/2} = \frac{1}{2} \left[ \rho^{Ln} u^{n\ast } + \rho^{Rn} u^{n\ast } - |u^{n\ast }|(\rho^{Rn} - \rho^{Ln}) \right]_{i\pm1/2}\,,
\end{equation}
and $\rho_{i\pm1/2}^{L/Rn}$ are edge values constructed as follows. 

On the cell $[x_{i-1/2},x_{i+1/2}]$, let 
\begin{equation}\label{eqn:10}
\rho_i^n(x)\approx \rho^n_i+(\partial_x\rho)^n_i(x-x_i).
\end{equation} 
Then at the interface $x_{i+1/2}$, there are two approximations from the left and from the right, i.e., 
\begin{equation}\label{eqn:rhoLR}
\rho_{i+1/2}^{Ln}= \rho^n_i+\frac{\Delta x}{2}(\partial_x\rho)^n_i,\qquad
\rho_{i+1/2}^{Rn}= \rho^n_{i+1}-\frac{\Delta x}{2}(\partial_x\rho)^n_{i+1}.
\end{equation}
Here $(\partial_x\rho)_i$ is given by the minmod limiter:
\begin{equation} \label{eqn:slopelimiter}
(\partial_x\rho)^n_i=\left\{\begin{array}{cc}\min\{\frac{\rho^n_{i+1}-\rho^n_i}{\Delta x},\frac{\rho^n_{i+1}-\rho^n_{i-1}}{2\Delta x},\frac{\rho^n_{i}-\rho^n_{i-1}}{\Delta x}\},&\qquad\mbox{if all are positive,}\\
\max\{\frac{\rho^n_{i+1}-\rho^n_i}{\Delta x},\frac{\rho^n_{i+1}-\rho^n_{i-1}}{2\Delta x},\frac{\rho^n_{i}-\rho^n_{i-1}}{\Delta x}\},&\qquad\mbox{if all are negative},\\
0,&\qquad\mbox{otherwise.}\end{array}\right.
\end{equation}
The reconstruction of $\rho$ based on cell averages $\rho_j$ via \eqref{eqn:10} has the following property (see e.g. \cite{CCH}).
The minmod limiter is used so that the reconstruction is second-order accurate and positive preserving. Other limiter such as Van Leer limiter can also be used \cite{BF}.


In the correction step, we simply employ the centered difference approximation, namely
\begin{equation} \label{sch:u1D2}
u^{n+1}_{i+1/2} =  - \frac{m}{m-1}  \frac{ ( \rho^{n+1}_{i+1})^{m-1} -( \rho^{n+1}_{i})^{m-1} }{\Delta x}.
\end{equation}
Note that at the propagating front, the accuracy of this approximation may degrade. However, it ensures the consistency condition between $\rho$ and $u$. The fully discrete scheme is first order accurate in time and second order accurate in space. We observe in numerical tests that, the convergence order is $1$ when the hyperbolic CFL condition is satisfied, and the convergence order becomes 2 if the parabolic CFL condition is used instead. In the future, we may investigate a second order discretization in time.



\subsection{The fully discrete scheme in 2D}
In 2D, the velocity has two components $\mathbf{u} = (u,v)$, with $u$ and $v$ being the velocities along $x$ and $y$ directions, respectively. Then \eqref{eq:ustar} becomes  
\begin{eqnarray*}
u_t = m \partial_x \left[ \rho^{m-2}  \left(  (\rho u)_x + (\rho v)_y - \rho G   \right)  \right]\,, \nonumber
\\ 
v_t = m \partial_y \left[  \rho^{m-2} \left(  (\rho u)_x + (\rho v)_y - \rho G   \right)   \right] \,, 
\end{eqnarray*}
and the discretization is essentially the same as in 1D except a slight difference in choosing half grid points or grid points. We assume that the computational domain is $(x,y)\in[a,b]\times[a,b]$, similar to the grids in $x$-direction, denote  
\[
y_j=a+j\Delta y,\quad y_{j+1/2}=a+(j+1/2)\Delta y,\qquad \Delta y = \frac{b-a}{N_y}, \qquad
j\in\{0,1,\cdots,N_y-1\}\,.
\] 
At variance with the 1D case, we compute both $\rho$ and $(u,v)$ on regular grids, and specify the half grid values if needed. To be concrete, 
let
\[
 \rho_{ij}(t)=\frac{1}{\Delta x\Delta y}\int_{x_{i-1/2}}^{x_{i+1/2}} \int_{y_{i-1/2}}^{y_{i+1/2}}   \rho(x, y, t)\,dx dy,
\qquad u_{ij}(t) \approx u(x_{i}, y_j, t)\, \qquad v_{ij}(t) \approx v(x_{i}, y_j, t) \,,
\]
then the discretization for $u$ takes the following form
\begin{eqnarray} \label{sch:u2D}
\frac{u_{ij}^{n*} - u_{ij}^n }{\Delta t} &=&  \frac{m}{\Delta x} 
\left\{    (\rho^n_{i+1/2,j})^{m-2} \frac{ (\rho^n u^{n*})_{i+1,j} - (\rho^n u^{n*})_{ij}  }{\Delta x}  -   
            (\rho^n_{i-1/2,j})^{m-2}\frac{ (\rho^n u^{n*})_{i j} - (\rho^n u^{n*})_{i-1,j}  }{\Delta x}   \right. \nonumber 
 \\ && \left.  +\frac{1}{2}( \rho_{i+1,j}^n)^{m-2}  \frac{ (\rho^n v^{n*})_{i+1,j+1} - (\rho^n v^{n*})_{i+1,j-1} }{2\Delta y} 
    -  \frac{1}{2} (\rho_{i-1,j}^n)^{m-2}  \frac{ (\rho^n v^{n*})_{i-1,j+1} - (\rho^n v^{n*})_{i-1,j-1} }{2\Delta y}      \right.    \nonumber 
 \\&&  \left.  - \frac{[(\rho^n)^{m-1} G^{n+1}]_{i+1,j} - [(\rho^n)^{m-1} G^{n+1}]_{i-1,j }}{2}  \right\}  \,.
\end{eqnarray}
Likewise, the discretization for $v$ reads
\begin{eqnarray}\label{sch:v2D}
\frac{v_{ij}^{n*} - v_{ij}^n }{\Delta t} &=&  \frac{m}{\Delta y} 
\left\{    (\rho^n_{i,j+1/2})^{m-2} \frac{ (\rho^n v^{n*})_{i,j+1} - (\rho^n v^{n*})_{ij}  }{\Delta y}  -   
            (\rho^n_{i,j-1/2})^{m-2}\frac{ (\rho^n v^{n*})_{i j} - (\rho^n v^{n*})_{i,j-1}  }{\Delta y}   \right. \nonumber 
 \\ && \left.  +\frac{1}{2}( \rho_{i,j+1}^n)^{m-2}  \frac{ (\rho^n u^{n*})_{i+1,j+1} - (\rho^n u^{n*})_{i-1,j+1} }{2\Delta x} 
    - \frac{1}{2}  (\rho_{i,j-1}^n)^{m-2}  \frac{ (\rho^n u^{n*})_{i+1,j-1} - (\rho^n u^{n*})_{i-1,j-1} }{2\Delta x}      \right.    \nonumber 
 \\&&  \left.  - \frac{[(\rho^n)^{m-1} G^{n+1}]_{i,j+1} - [(\rho^n)^{m-1} G^{n+1}]_{i,j-1 }}{2}  \right\}  \,.
\end{eqnarray}
Here again the half grid value of $\rho$ is taken as the average of grid values:
\[
\rho_{i+1/2,j}=\frac{1}{2} (\rho_{ij}+\rho_{i+1,j}), \quad \rho_{i,j+1/2}=\frac{1}{2} (\rho_{ij}+\rho_{i,j+1})\,.
\]
Putting \eqref{sch:u2D} and \eqref{sch:v2D} together, we end up with a linear system for $u^*$ and $v^*$ which can be solved using GMRES. 

Upon getting $u^{n*}$ and $v^{n*}$, $\rho^{n+1}$ can be obtained by virtue of the central scheme again:
\begin{equation} \label{sch:rho2D}
\frac{\rho^{n+1}_{ij} - \rho^n_{ij}}{\Delta t} + \frac{1}{\Delta x} \left[ (F_1)^n_{i+1/2,j} - (F_1)^n_{i-1/2,j}\right] 
+ \frac{1}{\Delta y} \left[ (F_2)^n_{i,j+1/2} - (F_2)^n_{i,j-1/2}\right]  = (\rho G)^{n+1}_{ij} \,,
\end{equation}
where 
\begin{eqnarray*}
&& (F_1)^n_{i\pm 1/2, j} = \frac{1}{2} \left[ \rho^{L_xn} u^{n*} + \rho^{R_xn} u^{n*} - |u^{n*}| (\rho^{R_xn} - \rho^{L_xn})\right]_{i\pm 1/2, j} \,,
\\ && (F_2)^n_{i, j\pm 1/2} = \frac{1}{2} \left[ \rho^{L_yn} u^{n*} + \rho^{R_yn} u^{n*} - |u^{n*}| (\rho^{R_yn} - \rho^{L_yn})\right]_{i, j \pm 1/2}\,.
\end{eqnarray*}

Here $\rho^{L_x}$, $\rho^{R_x}$, $\rho^{L_y}$, $\rho^{R_y}$ are half grid values obtained by linear reconstruction with a slope limiter:  
\begin{eqnarray*}
&& \rho_{i+1/2,j}^{L_xn}= \rho_{ij}^n+\frac{\Delta x}{2}(\partial_x\rho)^n_{ij},\qquad
\rho_{i+1/2,j}^{R_xn}= \rho^n_{i+1,j}-\frac{\Delta x}{2}(\partial_x\rho)^n_{i+1,j} \,, \nonumber
\\ && \rho_{i,j+1/2}^{L_yn}= \rho^n_{ij}+\frac{\Delta y}{2}(\partial_y\rho)^n_{ij},\qquad
\rho_{i,j+1/2}^{R_yn}= \rho^n_{i,j+1}-\frac{\Delta y}{2}(\partial_y\rho)^n_{i,j+1} \,,
\end{eqnarray*}
with the slope determined by \eqref{eqn:slopelimiter}. 

At last, the correction step takes the form
\begin{equation} \label{sch:u2D2}
u^{n+1}_{i,j} =  - \frac{m}{m-1}  \frac{ ( \rho^{n+1}_{i+1,j})^{m-1} -( \rho^{n+1}_{i-1,j})^{m-1} }{2\Delta x} \,, \qquad
v^{n+1}_{i,j} =  - \frac{m}{m-1}  \frac{ ( \rho^{n+1}_{i,j+1})^{m-1} -( \rho^{n+1}_{i,j-1})^{m-1} }{2\Delta y}\,.
\end{equation}

\subsection{Properties}
Since we have adopted a classical staggered grid scheme for  the spatial discretization, which is similar to a recent work \cite{BF}, we expect our fully discrete scheme to have similar properties. Nevertheless, it is worth pointing out, unlike the gradient flow model in \cite{CCH} or nonlinear degenerate parabolic equations in \cite{BF}, the tumor growth model is not associated with a decaying free energy/relative entropy. 

\subsubsection{Positivity preserving} Positivity preserving is desired for simulating tumor-growth models since lack of such property may result in nonphysical oscillations at the moving support of the cell density. Thanks to the nonnegative reconstruction by the minmond limiter, we can prove the positivity preserving property of the proposed scheme.  For simplicity, we make the following technical assumption, for a given $m$ and finite time $t$, there exists a uniform $U \in \mathbb R$ for all time steps, such that
\[
\max_{n,j} \{ u^{n*}_{j+1/2}\} \le U,
\]
and $G$ is nonnegative with an upper bound, namely
\[
G(c) \le G_{\max}.
\]
We state the time step constraints for the positivity preserving property in the following, whose proof roughly follows from that for Theorem 2.3 in \cite{CCH}.

\begin{theorem}
Consider the the fully discrete scheme \eqref{sch:u1D}--\eqref{sch:u1D2} to the cell density model with initial data $\rho_0 (x) \ge 0$. Then, the cell averages $\rho^n_i \ge 0$, $\forall n \in \mathbb N^+$ and $\forall i$, if the following CFL condition is satisfied
\begin{equation}\label{cond:dt1}
\Delta t  \le \frac{\Delta x}{2 U}, \quad \text{and} \quad 1-G_{\max} \Delta t >0.
\end{equation}
\end{theorem}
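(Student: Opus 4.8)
The plan is to mimic the classical positivity argument for second-order MUSCL-type central schemes (as in \cite{CCH}), adapted to account for the source term $\rho G$ which is treated implicitly. First I would observe that the update for $\rho^{n+1}_i$ in \eqref{eq:rhonew}–\eqref{eq:flux1d} can be written, after moving the implicit source term to the left-hand side, as
\[
(1-G^n_i\Delta t)\,\rho^{n+1}_i = \rho^n_i - \frac{\Delta t}{\Delta x}\left(F^n_{i+1/2}-F^n_{i-1/2}\right).
\]
Since $1-G_{\max}\Delta t>0$ forces $1-G^n_i\Delta t>0$ for every $i$, it suffices to show the right-hand side is nonnegative whenever all $\rho^n_j\ge 0$; then division by the positive factor $1-G^n_i\Delta t$ preserves the sign, and the result follows by induction on $n$, the base case being the hypothesis $\rho_0(x)\ge 0$ (so $\rho^0_i\ge 0$ by averaging).

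The core step is to rewrite the right-hand side as a convex-combination-like expression in the reconstructed point values. Using the reconstruction \eqref{eqn:rhoLR}, I would split $\rho^n_i = \tfrac12(\rho^{Ln}_{i+1/2}+\rho^{Rn}_{i-1/2})$ — which holds because $\rho^n_i$ is the midpoint of the linear reconstruction on cell $i$ — and substitute the upwind flux \eqref{eq:flux1d}. Writing the flux in upwind form, $F^n_{i+1/2}=u^{n*}_{i+1/2}\rho^{Ln}_{i+1/2}$ if $u^{n*}_{i+1/2}\ge 0$ and $=u^{n*}_{i+1/2}\rho^{Rn}_{i+1/2}$ otherwise, one collects the coefficients multiplying each of the nonnegative edge values $\rho^{Ln}_{i+1/2}$, $\rho^{Rn}_{i-1/2}$ (and possibly neighboring ones). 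A direct computation shows each such coefficient is of the form $\tfrac12 - \tfrac{\Delta t}{\Delta x}|u^{n*}|$ or $\tfrac12 \pm \tfrac{\Delta t}{\Delta x}u^{n*}$, which is $\ge 0$ precisely when $\Delta t\,|u^{n*}_{i\pm1/2}|\le \tfrac{\Delta x}{2}$. Invoking the uniform bound $|u^{n*}_{j+1/2}|\le U$ — here I should note that the stated hypothesis bounds $u^{n*}$ from above; for the argument I would use $|u^{n*}|\le U$, and in the write-up I'd phrase the assumption accordingly — gives exactly the CFL condition $\Delta t\le \Delta x/(2U)$. Finally, one must verify that the reconstructed edge values $\rho^{Ln}_{i+1/2},\rho^{Rn}_{i+1/2}$ are themselves nonnegative: this is the standard property of the minmod limiter \eqref{eqn:slopelimiter}, since the reconstructed linear function on cell $i$ stays between $\min$ and $\max$ of the cell averages used in the limiter, hence $\rho^n_i \pm \tfrac{\Delta x}{2}(\partial_x\rho)^n_i \ge 0$ when the neighboring cell averages are nonnegative; this is where I'd cite the reconstruction property from \cite{CCH}.

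I would then assemble the pieces: assuming $\rho^n_j\ge 0$ for all $j$, the minmod reconstruction gives $\rho^{Ln}_{i+1/2},\rho^{Rn}_{i-1/2}\ge 0$; the CFL condition $\Delta t\le\Delta x/(2U)$ makes all the combination coefficients nonnegative; hence the right-hand side $\rho^n_i - \tfrac{\Delta t}{\Delta x}(F^n_{i+1/2}-F^n_{i-1/2})\ge 0$; and division by $1-G^n_i\Delta t>0$ yields $\rho^{n+1}_i\ge 0$. Induction closes the proof.

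The main obstacle I anticipate is the bookkeeping in the flux-splitting step: because the upwind direction of $u^{n*}_{i+1/2}$ and $u^{n*}_{i-1/2}$ can be chosen independently, one must handle all sign combinations (or, more cleanly, write the numerical flux in the Rusanov/local Lax–Friedrichs form already given in \eqref{eq:flux1d} and track coefficients symbolically without case-splitting), and carefully confirm that every edge value appearing has a nonnegative coefficient under the single condition $\Delta t\le\Delta x/(2U)$. A secondary subtlety is the mismatch between the hypothesis as stated (an upper bound on $u^{n*}$) and what the positivity estimate actually needs (a bound on $|u^{n*}|$); I would resolve this simply by stating the assumption as a bound on $|u^{n*}_{j+1/2}|$, which is what is genuinely used, and noting it is the natural discrete analogue of a finite propagation speed.
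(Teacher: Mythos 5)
Your proposal is correct and follows essentially the same route as the paper's proof: move the implicit source term to the left so that $(1-G^n_i\Delta t)\rho^{n+1}_i$ equals the explicit transport update, split $\rho^n_i=\tfrac12\bigl(\rho^{Ln}_{i+1/2}+\rho^{Rn}_{i-1/2}\bigr)$, substitute the flux \eqref{eq:flux1d} to exhibit $\rho^{n+1}_i$ as a nonnegative combination of the minmod-reconstructed edge values under the CFL condition \eqref{cond:dt1}, and close by induction. Your observation that the argument genuinely uses a bound on $|u^{n*}_{j+1/2}|$ rather than only the one-sided bound stated in the hypothesis is a fair refinement of the paper's phrasing.
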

\begin{proof}
Assume that at time level $t_n=n\Delta t$, the cell averages are nonnegative: $\rho^n_j \ge 0$. After the prediction step for the velocity $u^{n*}$, we update the cell averages via the following formula
\begin{equation}\label{eq:up1}
(1-\Delta t \, G^n_i)  \rho^{n+1}_i =   \rho^{n}_i - \lambda \left[  F^n_{i+1/2}-F^n_{i-1/2} \right],
\end{equation}
where $\lambda = \Delta t/\Delta x$. From the definition of $\rho^{Ln}_{i+1/2}$ and $\rho_{i-1/2}^{Rn}$ in \eqref{eqn:rhoLR}, $\rho_i^n=(\rho^{Ln}_{i+1/2}+\rho_{i-1/2}^{Rn})/2$.
 Then, by substituting the definition of the numerical flux $F_{i\pm 1/2}^n$ in \eqref{eq:flux1d} into \eqref{eq:up1}, we get
\begin{align}
(1-\Delta t \, G^n_i)  \rho^{n+1}_i = &
 \frac{1-\lambda u^{n*}_{i+1/2}-\lambda| u^{n*}_{i+1/2}|}{2}\rho^{Ln}_{i+1/2}  -  \frac{ \lambda u^{n*}_{i+1/2}-  \lambda |u^{n*}_{i+1/2}|}{2}\rho^{Rn}_{i+1/2} \nonumber \\
 &+ \frac{\lambda u^{n*}_{i-1/2}+\lambda| u^{n*}_{i-1/2}|}{2}  \rho^{Ln}_{i-1/2} +\frac{1+\lambda u^{n*}_{i-1/2}-  \lambda |u^{n*}_{i-1/2}| }{2} \rho^{Rn}_{i-1/2}.\label{eq:up2}
\end{align}
It is obvious that the coefficients of $\rho^{Rn}_{i+1/2}$ and $\rho^{Ln}_{i-1/2}$ are always nonnegative, $\forall j$. And when condition  \eqref{cond:dt1} is satisfied, the coefficients of $\rho^{Ln}_{i+1/2}$ and $\rho^{Rn}_{i-1/2}$ are  also nonnegative. Hence, with condition \eqref{cond:dt1}, $\rho^{n+1}_i $  is  clearly  a linear combination of the nonnegative reconstructed  values at the cell boundaries, $\rho^{Ln}_{i+1/2}$, $\rho^{Ln}_{i-1/2}$, $\rho^{Rn}_{i+1/2}$  and $\rho^{Rn}_{i-1/2}$. Therefore, we conclude that given  \eqref{cond:dt1}, $\rho^{n+1}_i $ is nonnegative. The theorem thus follows by induction.
\end{proof}

%
%
%

%
\subsubsection{Energy estimate}
We show in the following that the spatial discretization is superior in the sense that it respects the energy estimate of the original equation. For simplicity, we assume $G$ is nonnegative and has an upper bound denoted by $G_{\max}$.

Let us look at the continuous setting first. Denote $H(\rho)= \frac{1}{m-1} \rho^m$ the density of the internal energy, it satisfies
\[
H'(\rho)=p(\rho).
\] 
Then the cell density model \eqref{eq:n} is equivalent to
\begin{equation}\label{eq:nH}
\frac{\partial}{\partial t} \rho - \nabla \cdot \left( \rho \nabla H'(\rho) \right)=\rho G(c).
\end{equation}
Then we define the free energy of equation \eqref{eq:nH} by \[
E(\rho) = \int_{\mathbb R^d} H(\rho) dx.
\]
Clearly, the free energy only consists of the internal energy and may grow in time due to the growth factor. For classical solutions, we have
\[
\frac{d}{dt}E(\rho) = - \int_{\mathbb R^d} \rho |u|^2 dx+\int_{\mathbb R^d} H'(\rho) \rho G dx = -I(\rho)+ mG E(\rho),
\]
where $I(\rho)=\int_{\mathbb R^d} \rho |u|^2 dx$ is the dissipation function and we have used the fact that
\[
H'(\rho) \rho = m H(\rho).
\]
Hence, we arrive at the following energy estimate 
\begin{equation}
E(\rho) (t) \le e^{mGt} E(\rho) (0).
\end{equation}

Next we turn to the semi-discrete (continuous in time) form of our scheme
\begin{equation}
\left\{
\begin{split}
\frac{d\rho_i}{dt}+\frac{F_{i+1/2}(t)-F_{i-1/2}(t)}{\Delta x}=\rho_iG(c_i),\\
u_{i+1/2} =  - \frac{m}{m-1}  \frac{ ( \rho_{i+1})^m -( \rho_{i})^m }{\Delta x}.
\end{split}
\right.
\end{equation}
Here when time variable is continuous, the prediction-correction scheme for the velocity $u$ reduces to an algebraic equation. 
The semi-discrete Energy takes the following form,
\[
E_\Delta (t) = \sum_{j} \Delta x H( \rho_j).
\]
By direct calculation and summation by parts, the time derivative of the semi-discrete energy is given by
\begin{align*}
\frac{d}{dt}E_\Delta  & =- \sum_j H'( \rho_j) (F_{i+1/2}(t)-F_{i-1/2}(t) ) + \sum_j \Delta x  H'( \rho_j) \rho_j G(c_j) \\
&= \sum_j (p( \rho_{j+1}) - p( \rho_{j}) )F_{i+1/2}+ \sum_j \Delta x  H( \rho_j)  m G(c_j) \\
& =-\Delta x \sum_j u_{i+1/2} F_{i+1/2} +\sum_j \Delta x  H( \rho_j)  m G(c_j).
\end{align*}
Note that we have the following estimates,
\begin{align*}
  -\Delta x \sum_j u_{i+1/2} F_{i+1/2} &= -\Delta x \sum_j u_{i+1/2} \left[ \frac{u_{i+1/2}+|u_{i+1/2}|}{2}\rho_{i+1/2}^L+ \frac{u_{i+1/2}-|u_{i+1/2}|}{2}\rho_{i+1/2}^R\right] \\
&\le \Delta x \sum_j (u_{i+1/2})^2 \min\{\rho_{i+1/2}^L,\rho_{i+1/2}^R\} \le 0,
\end{align*}
and
\[
\sum_j \Delta x  H( \rho_j)  m G(c_j) \le m G_{\max} E_\Delta 
\]
Therefore, we reach the analogue energy bound for the semi-discrete scheme
\begin{equation}
E_\Delta (t) \le e^{mG_{\max}t} E_\Delta (0).
\end{equation}

\section{Numerical examples} \label{sec:num}
In this section, we perform several numerical tests using our new schemes \eqref{sch:u1D}---\eqref{sch:u1D2}, \eqref{sch:u2D}--\eqref{sch:u2D2} or  their variations (will be specified below when needed) to justify its performance especially in capturing the large $m$ limit. Here most of the examples are picked from \cite{BF,Pbook,LTWZ}. 

We first elaborate the nutrient models in the following:
\begin{equation} \label{eqn:c0}
\tau \frac{\partial}{\partial t}  c - \Delta c + \Psi (\rho,c)=0,
\end{equation} 
where $\Psi(\rho,c)$ is the consumption function which takes different forms and $\tau$ is a time scaling constant. When $\tau=0$, the nutrient distribution $c(x,t) $ effectively adjusts to its local equilibrium according to the cell density $\rho(x,t)$.  
As in \cite{PTV}, we consider two specific models: the \emph{in vitro} model and the \emph{in vivo} model. For the \emph{in vitro} model, one assumes that the nutrient is constant outside the tumoral region; while the consumption is linear in $c$ inside, thus equation \eqref{eqn:c0} reads
\begin{align} 
-\Delta c + \psi(n)c=0, & \quad \mbox{for} \, x\,\in\, D;  \label{eqn:c0-invitro} \\
c=c_B, & \quad   \mbox{for} \, x\,\in\,\mathbb{R} \backslash D, \label{eqn:c1-invitro}
\end{align}
where 
\begin{equation} \label{eqn:D000}
D = \{n(x)>0 \} = \{ p(n)>0 \}\,.
\end{equation}
Here $\psi(n)$ satisfies
\begin{equation} \label{eqn:psi000}
\psi(n)\geq 0  \quad \text {for } n\geq 0, \qquad \text{and} \qquad  \psi(0)=0\,.
\end{equation}
For the \emph{in vivo} model, the nutrient is brought by the vasculature network away from the tumor and diffused to the tissue. In this case, equation \eqref{eqn:c0} writes
\begin{equation}\label{eq:c}
- \Delta c + \psi(n)c=\chi_{\{ n=0\}} (c_B-c)d \,,
\end{equation}
where $\psi$ is the same as in (\ref{eqn:psi000}). We remark that, the authors proved in \cite{LTWZ} that, both models satisfy the boundedness estimate \eqref{est:cbound}.

\subsection{Convergence test}
In the first example, we consider the 1D porous media equation 
\begin{equation}
\partial_t \rho = \partial_x \rho^{m}\,, \quad m>1\,. 
\end{equation}
with a computational domain $[-5,5]$. The initial condition takes the Barenblatt form 
\begin{equation} \label{eqn:Barenblatt}
\rho(x,0) = \frac{1}{t_0^\alpha} \left( C - \alpha \frac{m-1}{2m} \frac{x^2}{t_0^{2\alpha}} \right)_{+}^{\frac{1}{m-1}}
\end{equation}
with $t_0 = 0.01$ and $\alpha = \frac{1}{m+1}$. $C$ is a constant to be defined for different tests. Then the solution $\rho(x,t)$ remains the shape of \eqref{eqn:Barenblatt} with $t$ in place of $t_0$.  To check convergence, we calculate the errors compared to the analytical solution with decreasing $\Delta x = 1/16, 1/32, 1/64, 1/128, 1/256$:
\begin{equation} \label{error}
error = \sum_{j=1}^{N_x} \sum_{n=1}^{N_t} |\rho_j^n - \rho(x_j, n\Delta t)| \Delta x \Delta t \,.
\end{equation}
We first choose $\Delta = 0.01 \Delta x$. In Fig.~\ref{fig:porous} on the left, we display both the numerical solution and the analytical solution, wherein good agreement is seen. On the right, we plot the error \eqref{error} versus $\Delta x$ and observe a first order accuracy for varying $m$. Here for $m=3$, we choose $C=1$, and $m=15, ~60$, $C=0.1$ just to make sure the solution support is within our computational domain. In Fig.~\ref{fig:porous2}, we use $\Delta t= \Delta^2$. Then for small $m=3$, a second order accuracy can be observed (left) whereas for larger $m$, since the boundary becomes shaper and accuracy degrades. To show the boundary effect, we compute the error in time (right), i.e.,
\[
error(k\Delta t) =   \sum_{j=1}^{N_x}  |\rho_j^k - \rho(x_j, k\Delta t)| \Delta x 
\]
and the oscillation in this error demonstrates the effect of sharp boundary. 
\begin{figure}[!ht]
\centering
\includegraphics[width = 0.48\textwidth]{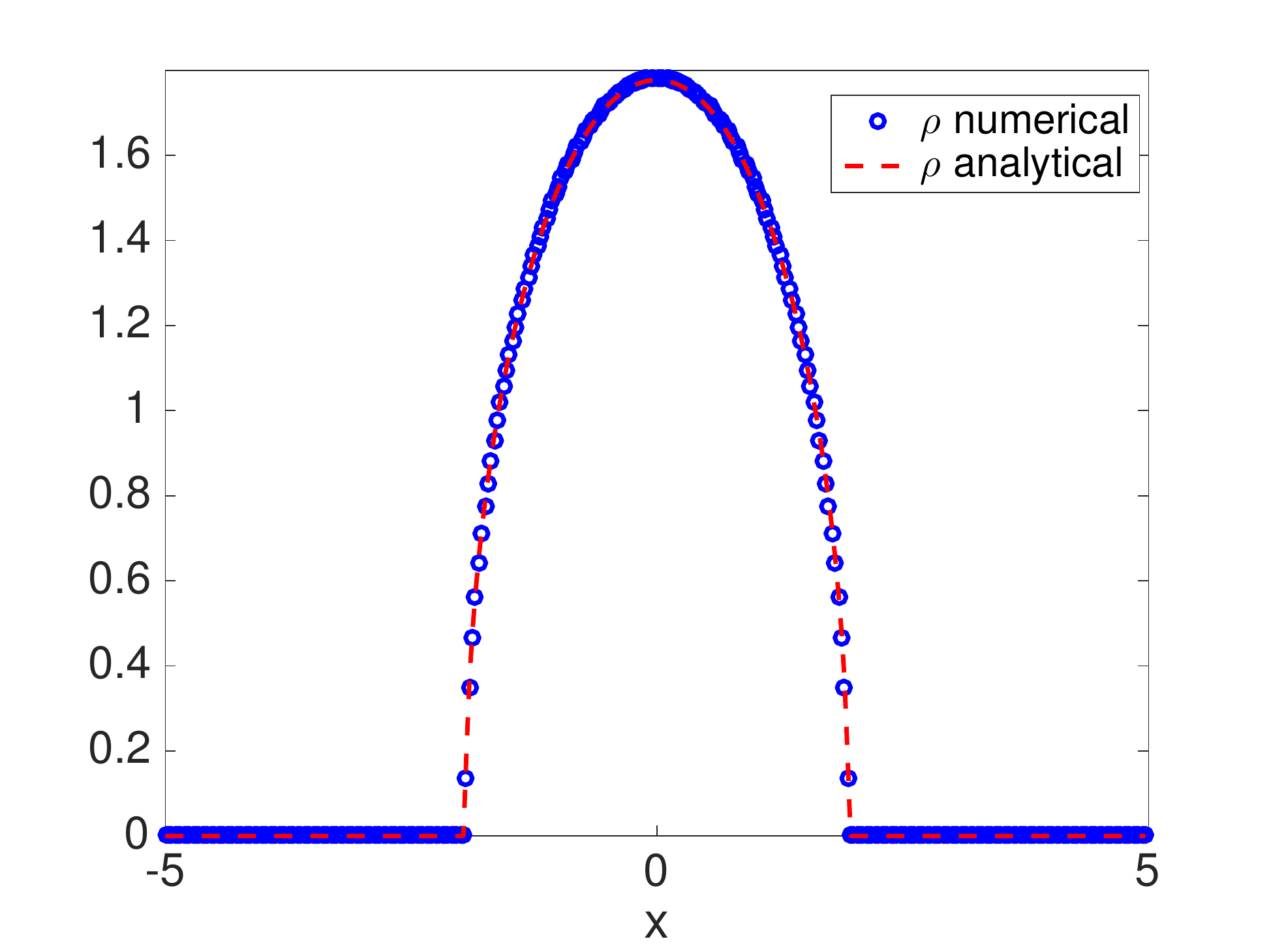}
\includegraphics[width = 0.48\textwidth]{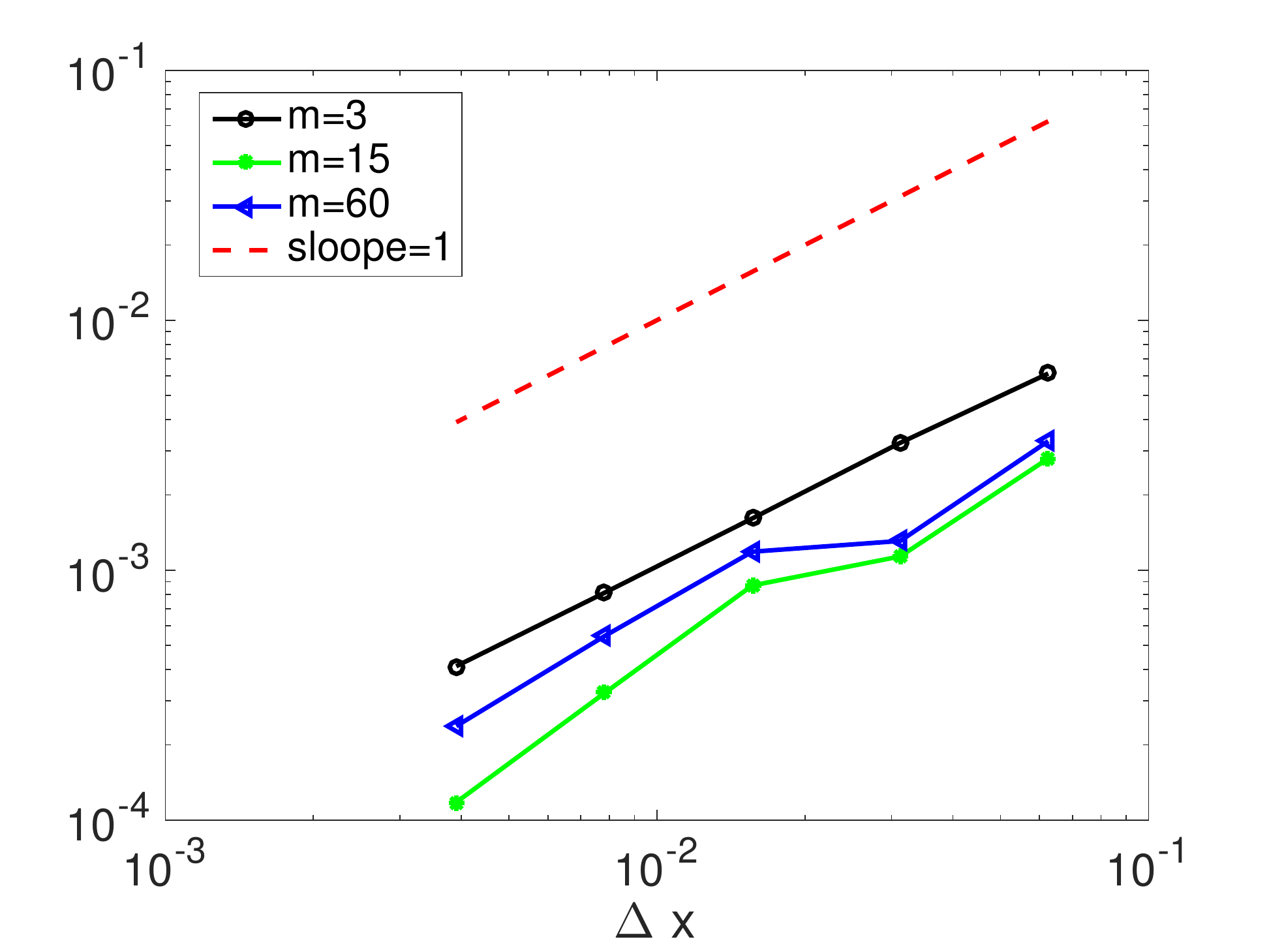}
\caption{ 1D porous media equation using $\Delta t = 0.01\Delta x$. The computed time is $T=0.1$. Left: plot of solution with $m=3$ and $N_x=320$. Right: Plot of error between analytical solution and numerical solution versus $\Delta x$. }
\label{fig:porous}
\end{figure}

\begin{figure}[!ht]
\centering
\includegraphics[width = 0.48\textwidth]{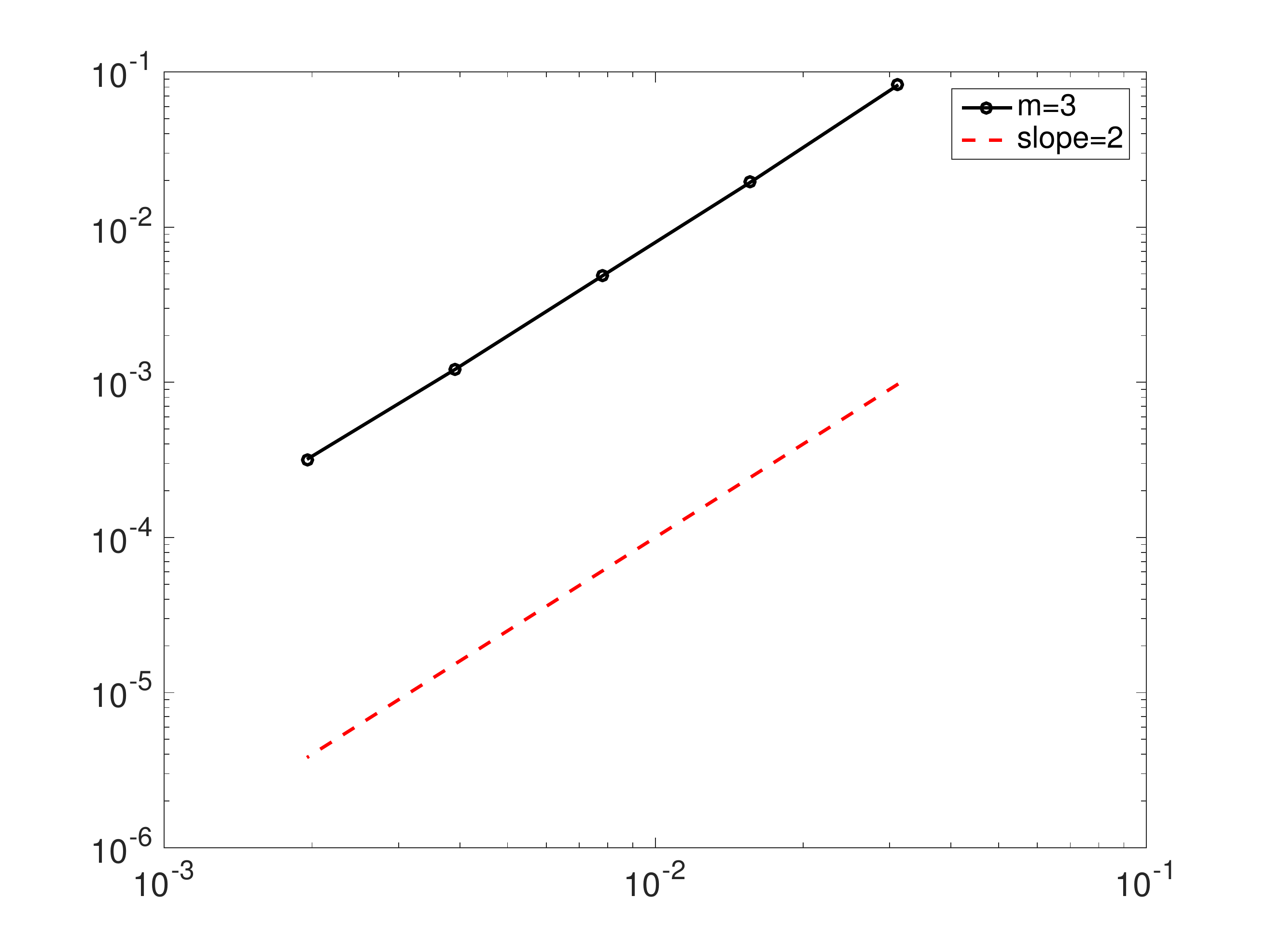}
\includegraphics[width = 0.48\textwidth]{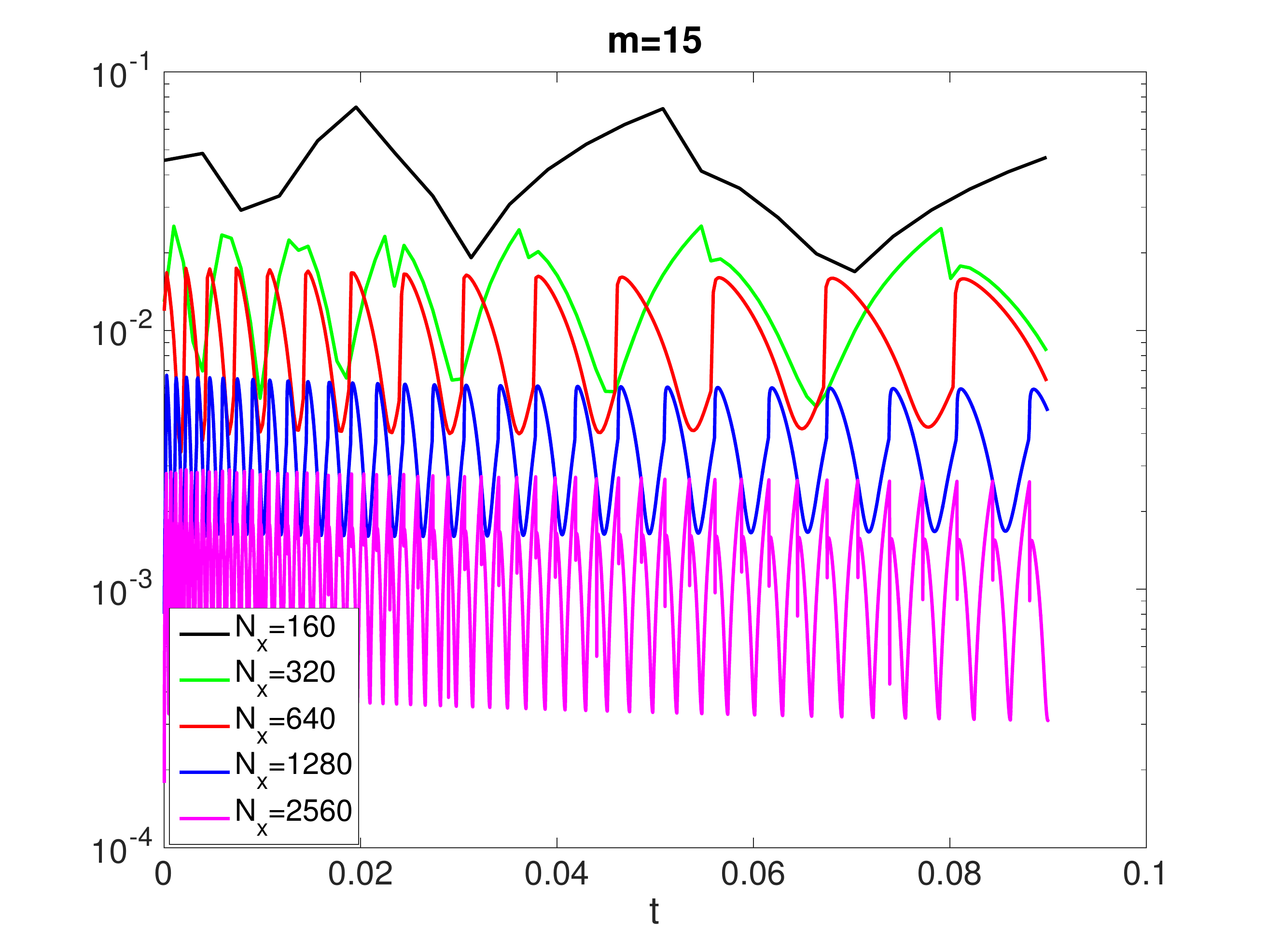}
\caption{ 1D porous media equation using $\Delta t = \Delta x^2$. Left: $m=3$. Plot of error versus $\Delta x$. Initial condition is \eqref{eqn:Barenblatt} with $t_0=0.01$ and $C=1$. Right: $m=15$. Plot of error along time. Initial condition is \eqref{eqn:Barenblatt} with $t_0=0.01$ and $C=0.1$.}
\label{fig:porous2}
\end{figure}

\subsection{1D model with linear growth}
Our second test is devoted to 1D model with linear growth, i.e., $G(c) = c$. The specific model we are computing reads
\begin{eqnarray}
&&\frac{\partial}{\partial t} \rho - \partial_x  \left( \rho \partial_x p(\rho) \right)=\rho c, \qquad p(\rho) = \frac{m}{m-1} \rho^{m-1}\,, \label{eqn:1Drho}
\\ && -\partial_{xx} c + \Psi(\rho,c) = 0\,,  \label{eqn:1Dc0}
\end{eqnarray}
where the nutrient \eqref{eqn:1Dc0} takes the form 
\begin{equation} \label{eqn:vitro1D}
\left\{ \begin{array}{cc} - \partial_{xx}c + \rho c = 0  & x\in D=\{\rho>0\} \\ c = 1 & {\text otherwise} \end{array} \right. 
\end{equation}
in the {\it in vitro} model and
\begin{equation} \label{eqn:vivo1D}
- \partial_{xx} c + \rho c = \chi_{\rho=0} (1-c)
\end{equation}
in the {\it in vivo} model. 

If initially we consider $\rho(x,0)$ as a characteristic function with support $[-R(0), R(0)]$, then sending $m$ to infinity, the cell density $\rho(x,t)$ remains a characteristic function with support $[-R(t), R(t)]$, i.e., $\rho_\infty = \chi_{[-R(t), R(t)]}$. In the {\it in vitro} model $R^\vitro(t)$ evolves as $\partial_t R^\vitro = \text{tanh}(R^\vitro)$, 
and the limiting pressure $p_\infty^\vitro(x,t)$ takes the form
\begin{equation} \label{eqn:pinfvitro}
p_\infty^{\it vitro} = \left\{  \begin{array}{cc} - \text{cosh}(x)/\text{cosh}(R(t)) + 1, & x \in[-R^\vitro(t), R^\vitro(t)]; \\ 0 & \text{ otherwise}, \end{array} \right. 
\end{equation}
In the {\it in vivo} model, $R^{\vivo}(t)$ obeys $\partial_t R^\vivo = \text{sinh}(R^\vivo) / e^{R^\vivo}$
and the limiting pressure $p_\infty^\vivo(x,t)$ is 
\begin{equation} \label{eqn:pinfvivo}
p_\infty^\vivo = \left\{ \begin{array}{cc}  [\cosh(R^\vivo(t))-\text{cosh}(x)]/e^{R^\vivo(t)},
& x\in[-R^\vivo(t), R^\vivo(t)]; \\ 0 & \text{otherwise}. \end{array} \right. 
\end{equation}
Details of the derivation of the above formulas can be found in \cite{LTWZ}.

Here we use computational domain $[-5,5]$ and choose the initial data to be
\begin{equation}
\rho(x,0) = \left( \frac{m-1}{m} p_\infty(x,0) \right) ^{\frac{1}{m-1}}\,, \quad R(0) = 1\,,
\end{equation} 
where $p_\infty$ takes either \eqref{eqn:pinfvitro} or \eqref{eqn:pinfvivo}. We compare our numerical solution with the above analytical solution with $m = 80$ and the results are collected in Fig.~\ref{fig:1Dvitro} and Fig.~\ref{fig:1Dvivo} with $\Delta x = 0.025$ and $\Delta t = 0.00125$. For both the density $\rho$ and pressure $P$, our solution matches well with the analytical solution of the Hele-Shaw model, indicating that our scheme performs well in capturing the free boundary limit. 
\begin{figure}[!ht]
\centering
\includegraphics[width = 0.48\textwidth]{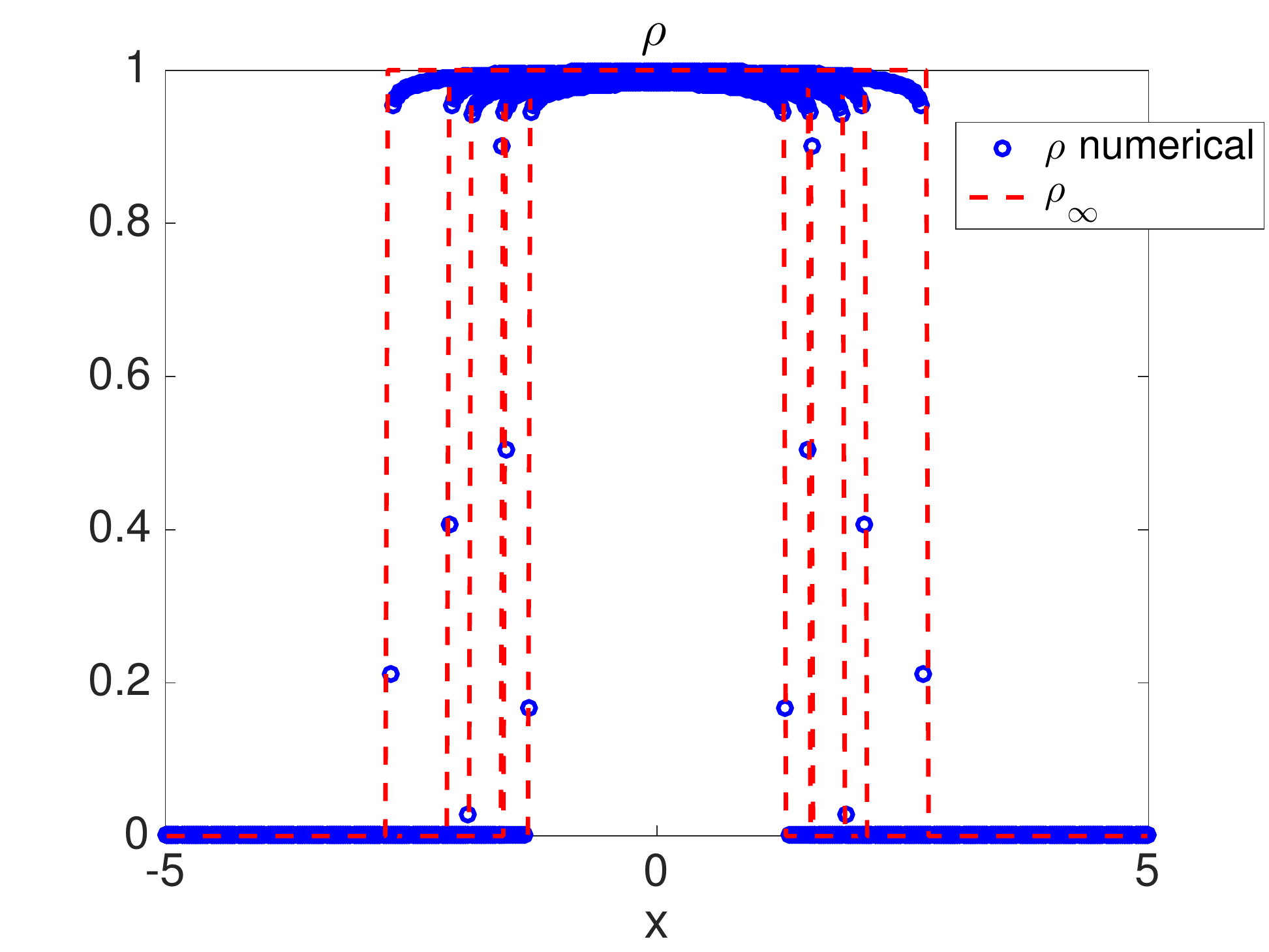}
\includegraphics[width = 0.48\textwidth]{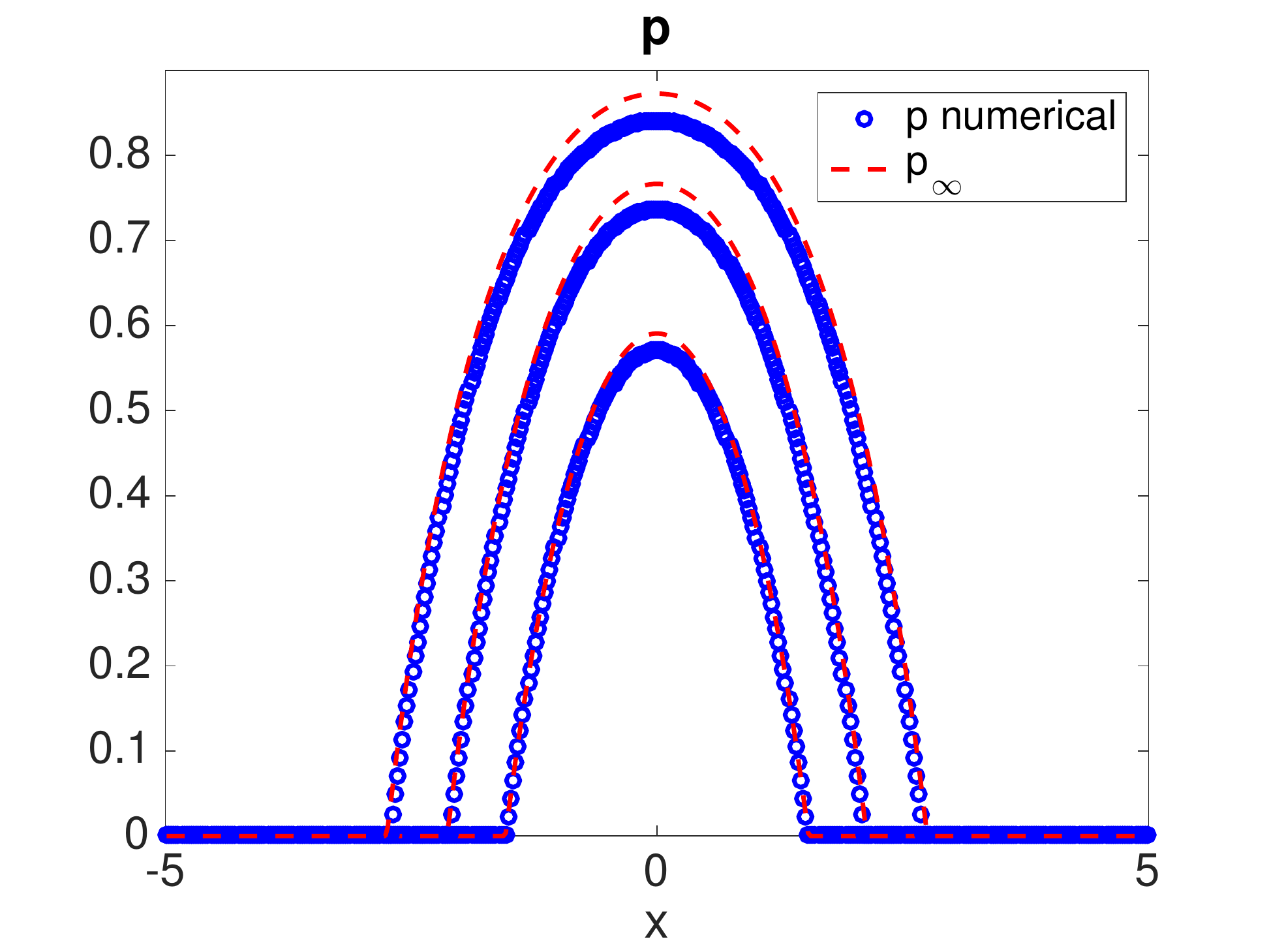}
\caption{1D {\it in vitro} model with $m=80$. We compare the numerical solution (blue circle) and analytical solution (red dashed curve) at different times $t=0.6237$, $t=1.2487$ and $t=1.8737$. The left figure is for the cell density $\rho$ and the right one is for pressure $p(\rho)$. Here we use $\Delta x = 0.025$ and $\Delta t = 0.00125$. }
\label{fig:1Dvitro}
\end{figure}

\begin{figure}[!ht]
\centering
\includegraphics[width = 0.48\textwidth]{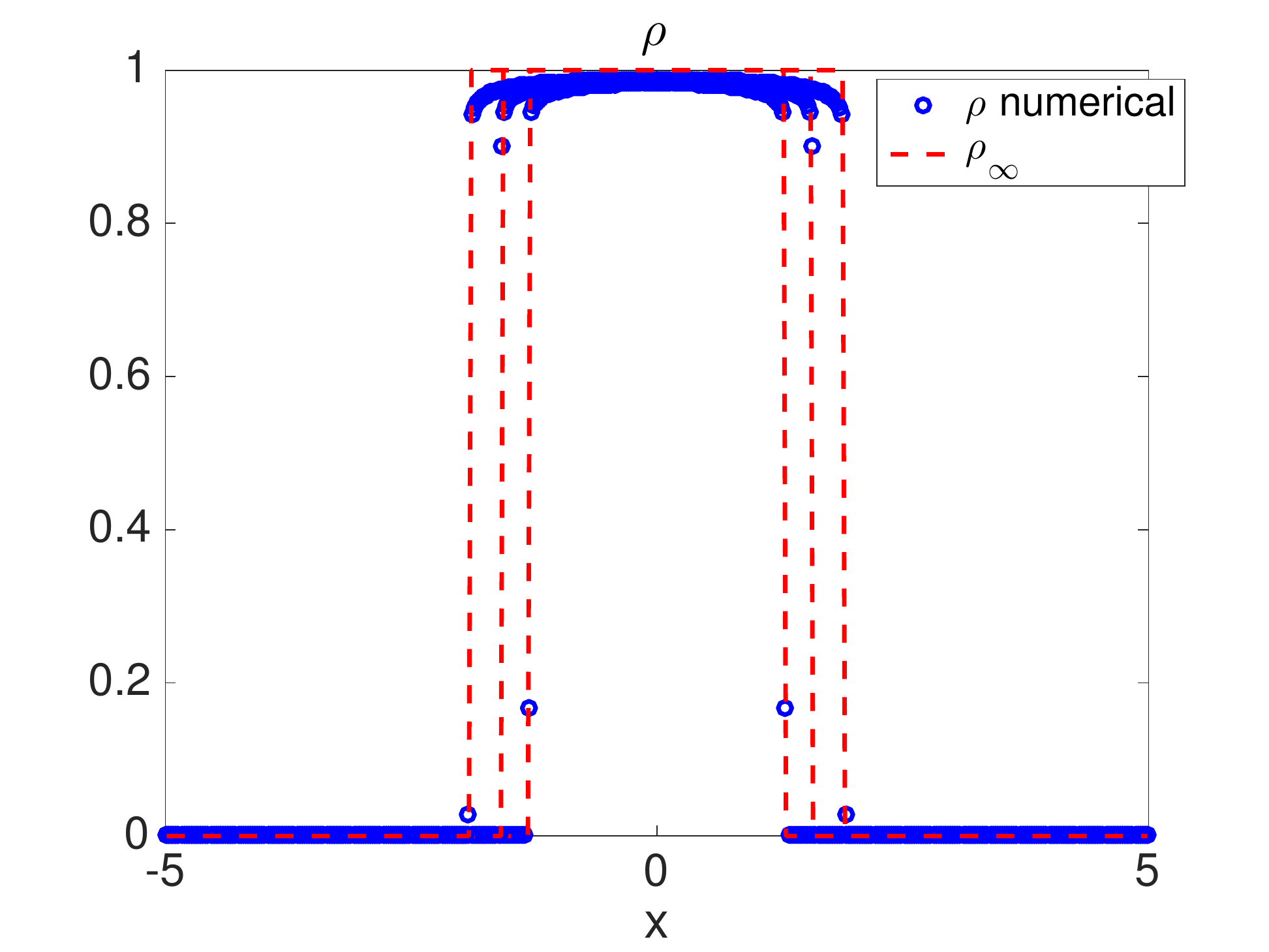}
\includegraphics[width = 0.48\textwidth]{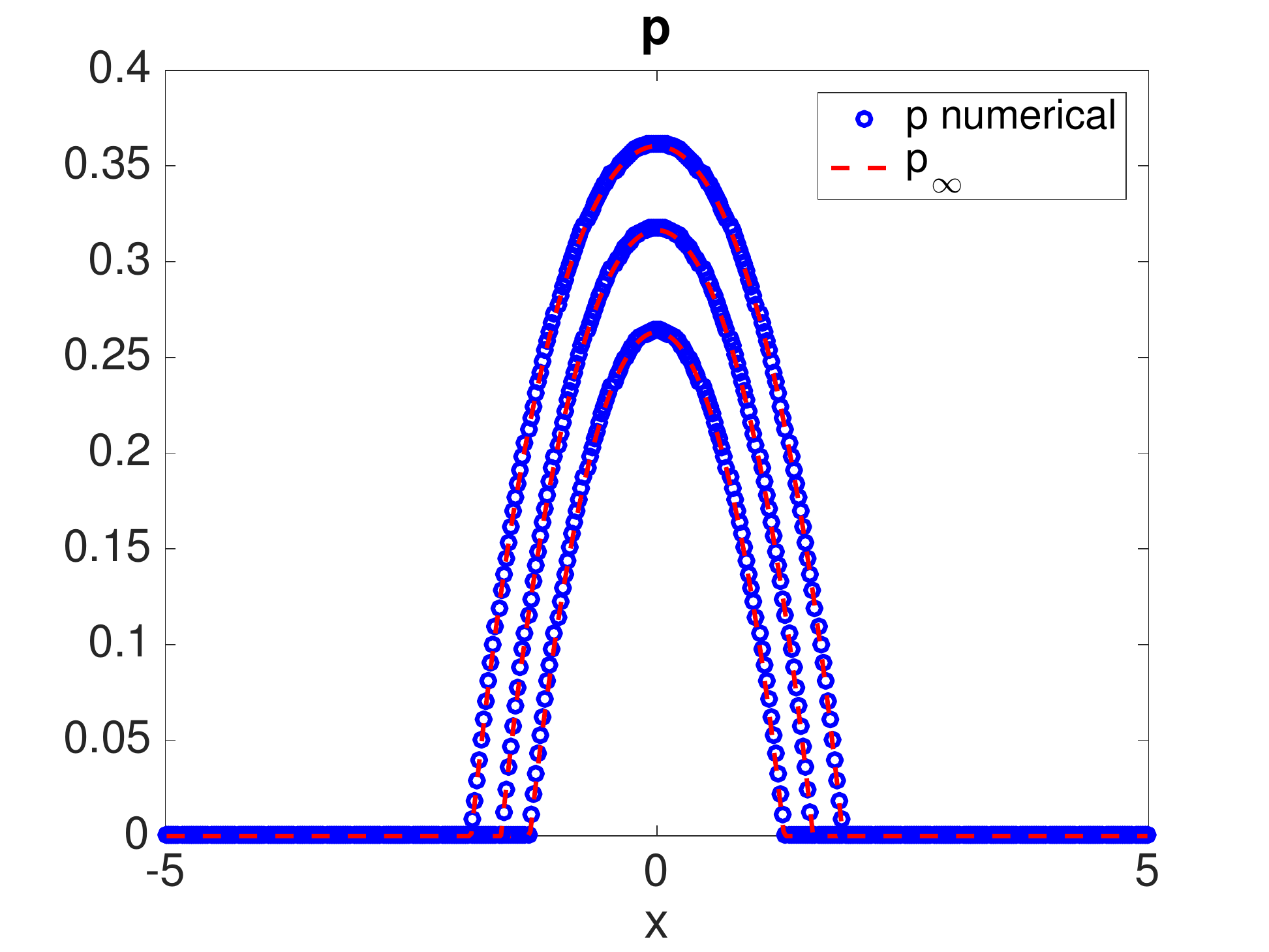}
\caption{1D {\it in vivo} model with $m=80$. We compare the numerical solution (blue circle) and analytical solution (red dashed curve) at different times $t=0.6237$, $t=1.2487$ and $t=1.8737$. The left figure is for the cell density $\rho$ and the right one is for pressure $p(\rho)$. Here we use $\Delta x = 0.025$ and $\Delta t = 0.00125$. }
\label{fig:1Dvivo}
\end{figure}

\subsection{2D radial symmetric model with constant nutrient}
Thirdly, we test the 2D radial symmetric case with constant nutrient $c \equiv1$ and $G(c)\equiv 1$. Then our new form of system \eqref{eqn:dae3} rewrites
\begin{equation*}
\left\{\begin{aligned}
&\p_t \rho+ \frac{1}{r}\frac{\partial}{\partial r}( r\rho  u )=\rho ,\\
&\p_t u=m   \frac{\partial}{\partial r} \left[\rho^{m-2}\left(\frac{1}{r} \frac{\partial}{\partial r} (\rho u r)-\rho G(c)\right)\right],
\end{aligned}\qquad
u(x,t)=-m \rho^{m-2} \frac{\partial \rho}{\partial r}\,,
\right.		
\end{equation*}
where the left system is again the prediction step and the right relation is the correction step. The discretization is similar to that in Section 4 and we omit the details here.  

If we start with an indicator function for the cell density, i.e., $$\rho(0,t)= \mathbf{1}_{r_{-}(0) \leq r \leq r_+(0)}\,,$$ then as $m$ goes to infinity, $\rho$ remains a characteristic function and the pressure has the form \cite{LTWZ}
\begin{equation}\label{eqn:pinf1ann}
p_\infty(r,t) = - \frac{r^2}{4} + \frac{r_+^2 - r_-^2 }{4 (\ln r_+ - \ln r_-)} \ln r  - \frac{r_+^2 \ln r_- - r_-^2 \ln r_+ }{ 4 (\ln r_+ - \ln r_-)}\,,
\end{equation}
where $r_-(t)$ and $r_+(t)$ satisfies
\begin{eqnarray*}
\partial_t r_- = \frac{1}{2} r_- - \frac{ r_+^2 - r_-^2}{ 4  r_-(\ln r_+ - \ln r_- )}\,,
\qquad \partial_t r_+ = \frac{1}{2} r_+ - \frac{r_+^2 - r_-^2 }{ 4  r_+(\ln r_+ - \ln r_- )}\,.
\end{eqnarray*}
Numerically, we choose the initial data as
\begin{equation} \label{IC:1ann}
\rho(r,0) = \left( \frac{m-1}{m} p_\infty (r,0)\right)^{\frac{1}{m-1}}, \quad r_+(0) = 1, ~ r_-(0)=0.6\,,
\end{equation}
where $p_\infty$ is defined in \eqref{eqn:pinf1ann}, and compare our numerical solution with the analytical one specified above. The results are gathered in Fig.~\ref{fig:1ann}. 
\begin{figure}[!ht]
\centering
\includegraphics[width = 0.48\textwidth]{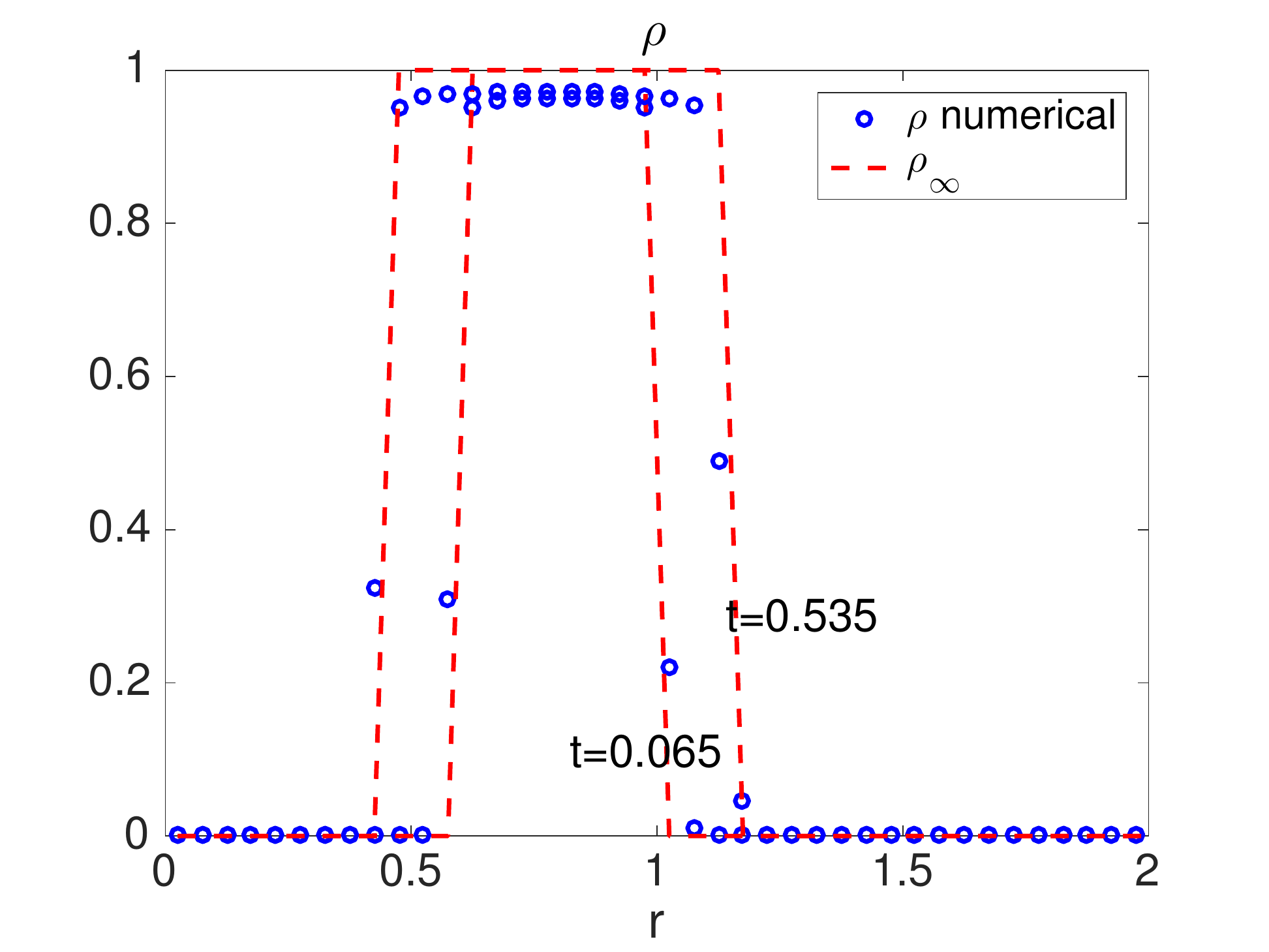}
\includegraphics[width = 0.48\textwidth]{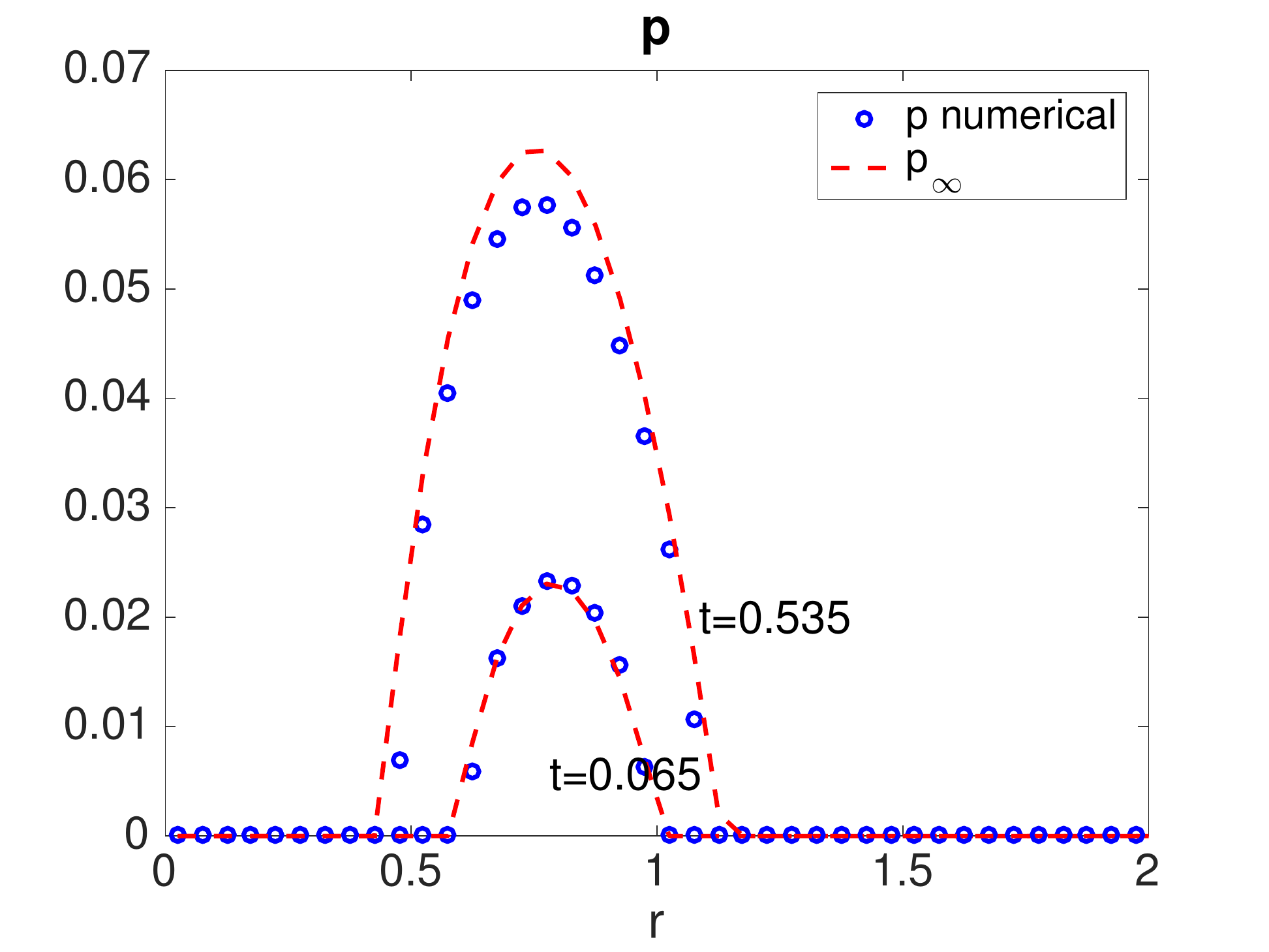}
\caption{2D radial symmetric case with one annular initial data \eqref{IC:1ann} and $m=100$. Here $\Delta r =0.05$, and $\Delta t=0.0025$.  }
\label{fig:1ann}
\end{figure}

\begin{figure}[!ht]
\centering
\includegraphics[width = 0.48\textwidth]{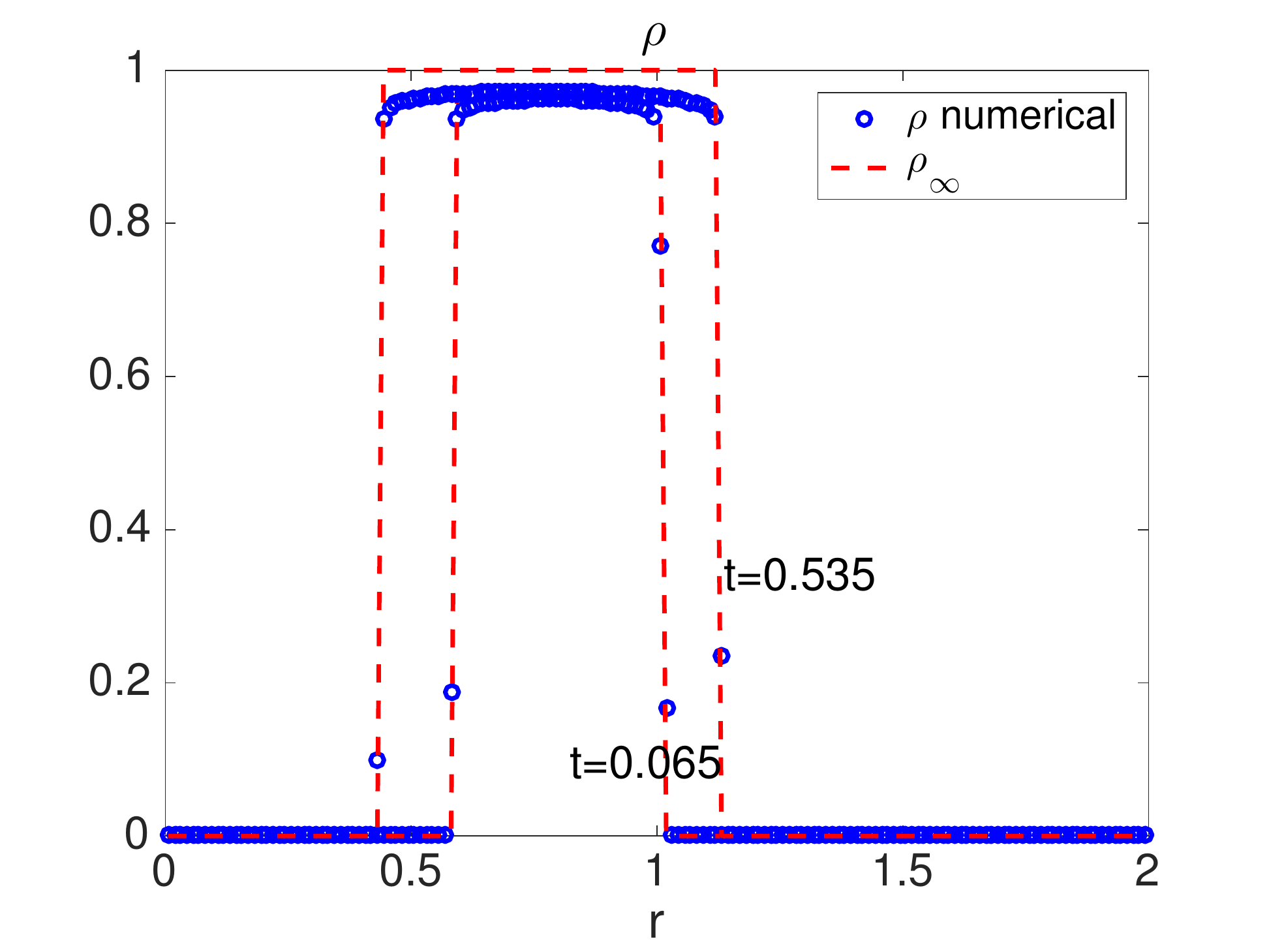}
\includegraphics[width = 0.48\textwidth]{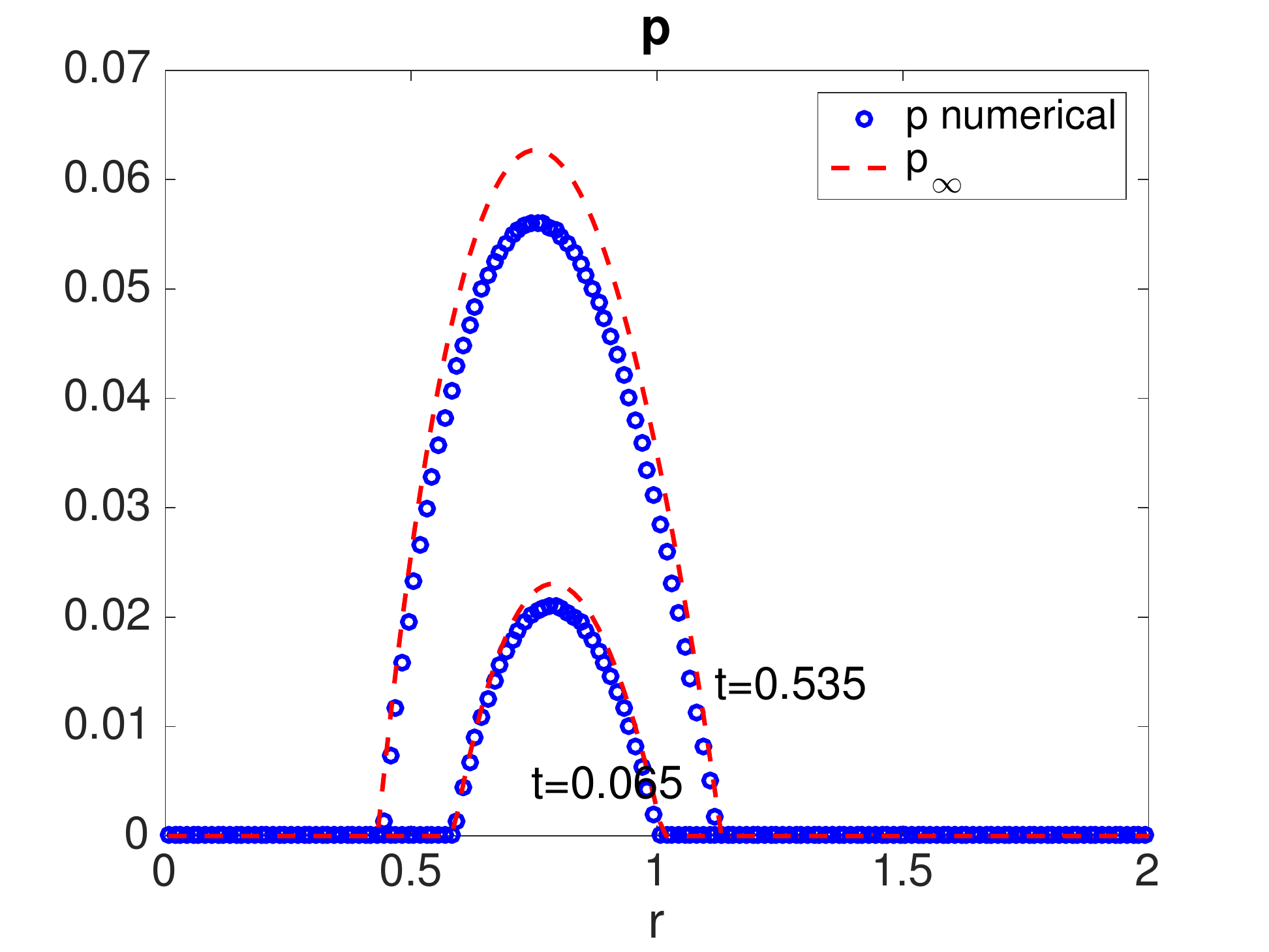}
\caption{The same example as in Fig. \ref{fig:1ann} but with finer mesh $\Delta r =0.0125$, and $\Delta t=0.000625$.  }
\label{fig:1ann}
\end{figure}

Next we take the initial condition with two annulus:
\[
\rho(r,0) = \mathbf{1}_{r_1(0)\leq r \leq r_2(0)} +\mathbf{1}_{r_3(0)\leq r \leq r_4(0)} \, 
\qquad r_1(0)< r_2(0) <r_3(0) < r_4(0)\,,
\]
then the pressure, as $m$ goes to infinity, takes the form 
\begin{eqnarray}
p_\infty(r,t) = \left\{ \begin{array}{cc} 
-\frac{r^2}{4} + \frac{r_2^2 - r_1^2}{4 (\ln r_2 - \ln r_1)} \ln r - \frac{r_2^2 \ln r_1 - r_1^2 \ln r_2}{4(\ln r_2 - \ln r_1)} \,, & r_1 \leq r \leq r_2\,;  \label{pinf_2ann0}
\\ -\frac{r^2}{4} + \frac{r_4^2 - r_3^2}{4 (\ln r_4 - \ln r_3)} \ln r - \frac{r_3^2 \ln r_3 - r_4^2 \ln r_4}{4(\ln r_4 - \ln r_3)} & r_3\,, \leq r \leq r_4\,,  \label{pinf_2ann1}
\end{array} \right.
\end{eqnarray}
where $r_1(t) \sim r_4(t)$ changes according to 
\begin{eqnarray*}
&& \partial_t r_1 = \frac{r_1}{2} - \frac{r_2^2 - r_1^2}{4r_1 (\ln r_2 - \ln r_1)}\,, \quad 
\partial_t r_2 = \frac{r_2}{2} - \frac{r_2^2 - r_1^2}{4r_2 (\ln r_2 - \ln r_1)}\,, 
\\&&
\partial_t r_3 = \frac{r_3}{2} - \frac{r_4^2 - r_3^2}{4r_3 (\ln r_4 - \ln r_3)}\,, \quad 
\partial_t r_4 = \frac{r_4}{2} - \frac{r_4^2 - r_3^2}{4r_4 (\ln r_4 - \ln r_3)}\,.
\end{eqnarray*}
To compute, we set initial cell density to be 
\begin{equation}
\rho(r,0) = \left( \frac{m-1}{m} p_\infty (r,0) \right)^{\frac{1}{m-1}},  \quad 
r_1(0) = 0.6,~ r_2(0) = 0.9, ~ r_3(0) = 1.5, ~ r_4(0) = 1.8\,,
\end{equation}
where $p_\infty $ is defined in \eqref{pinf_2ann0} \eqref{pinf_2ann1}. The solutions are collected in Fig.~\ref{fig:2ann}.

In both examples, the time evolution of both cell density and pressure are displayed, overlaid with analytical solution, wherein good agreement can be observed.  
\begin{figure}[!ht]
\centering
\includegraphics[width = 0.48\textwidth]{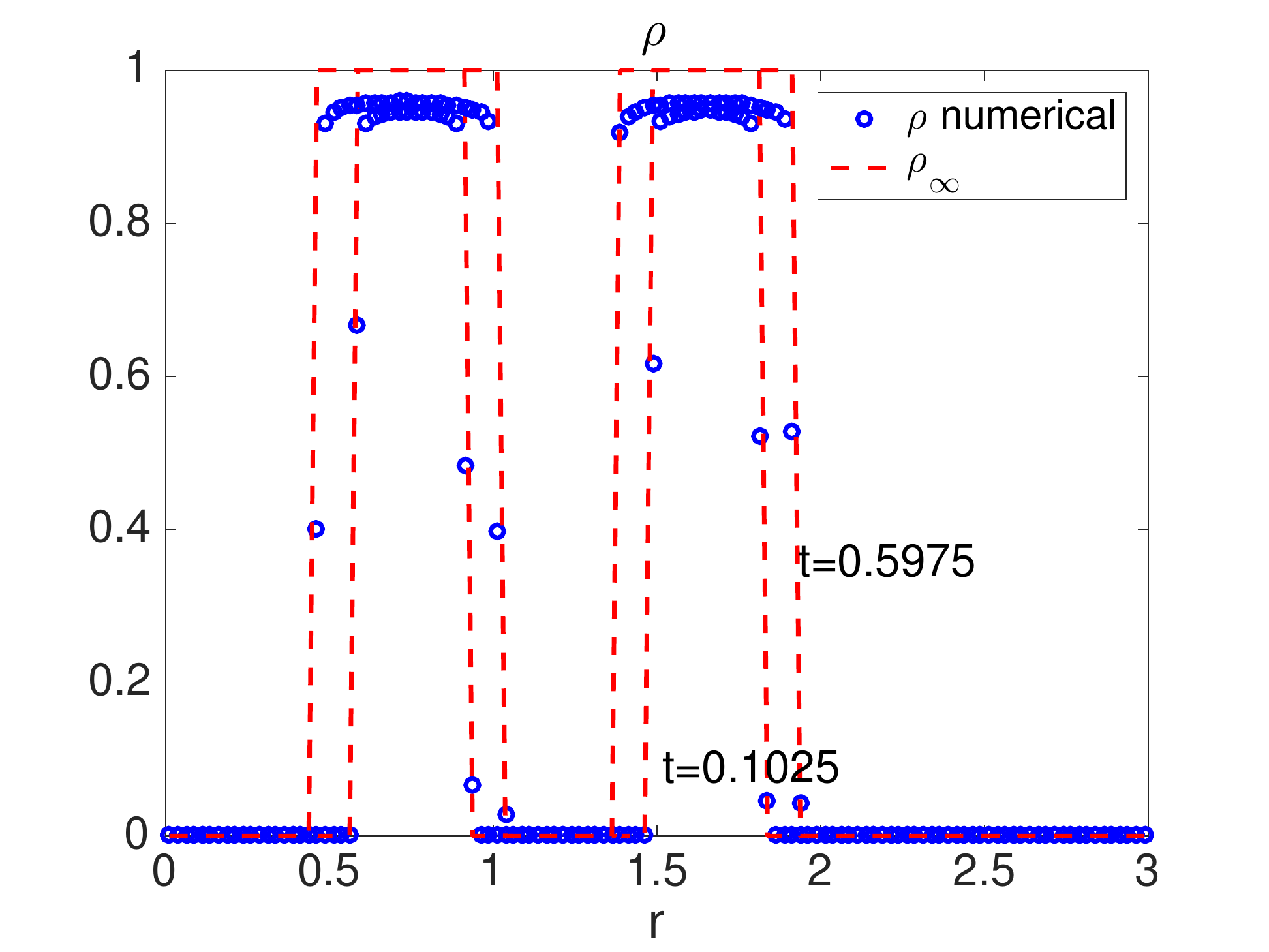}
\includegraphics[width = 0.48\textwidth]{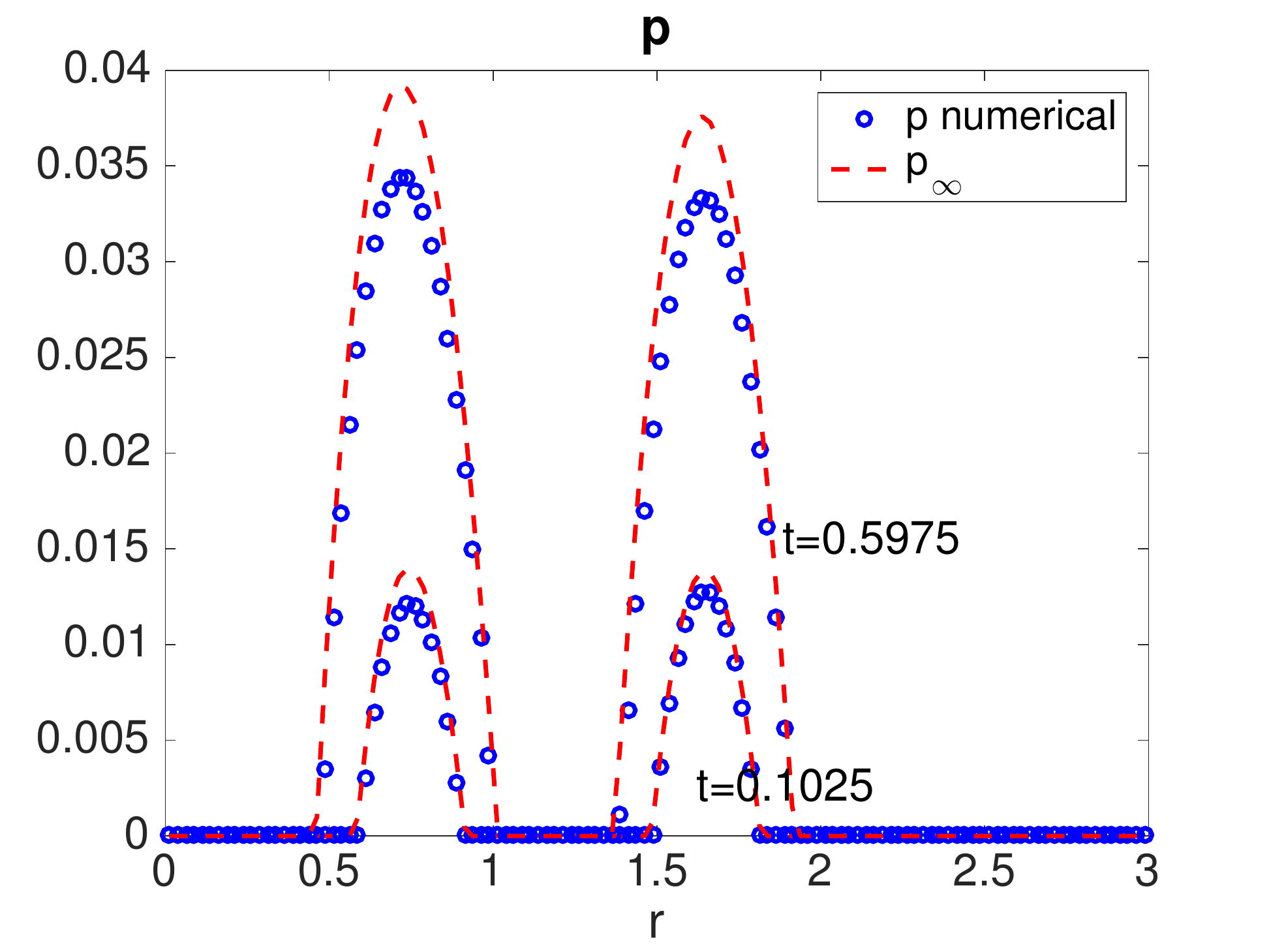}
\caption{2D radial symmetric case with double annulus initial data \eqref{IC:1ann} and $m=80$. Here $\Delta r =0.025$, and $\Delta t=0.00125$.  }
\label{fig:2ann}
\end{figure}

\begin{figure}[!ht]
\centering
\includegraphics[width = 0.48\textwidth]{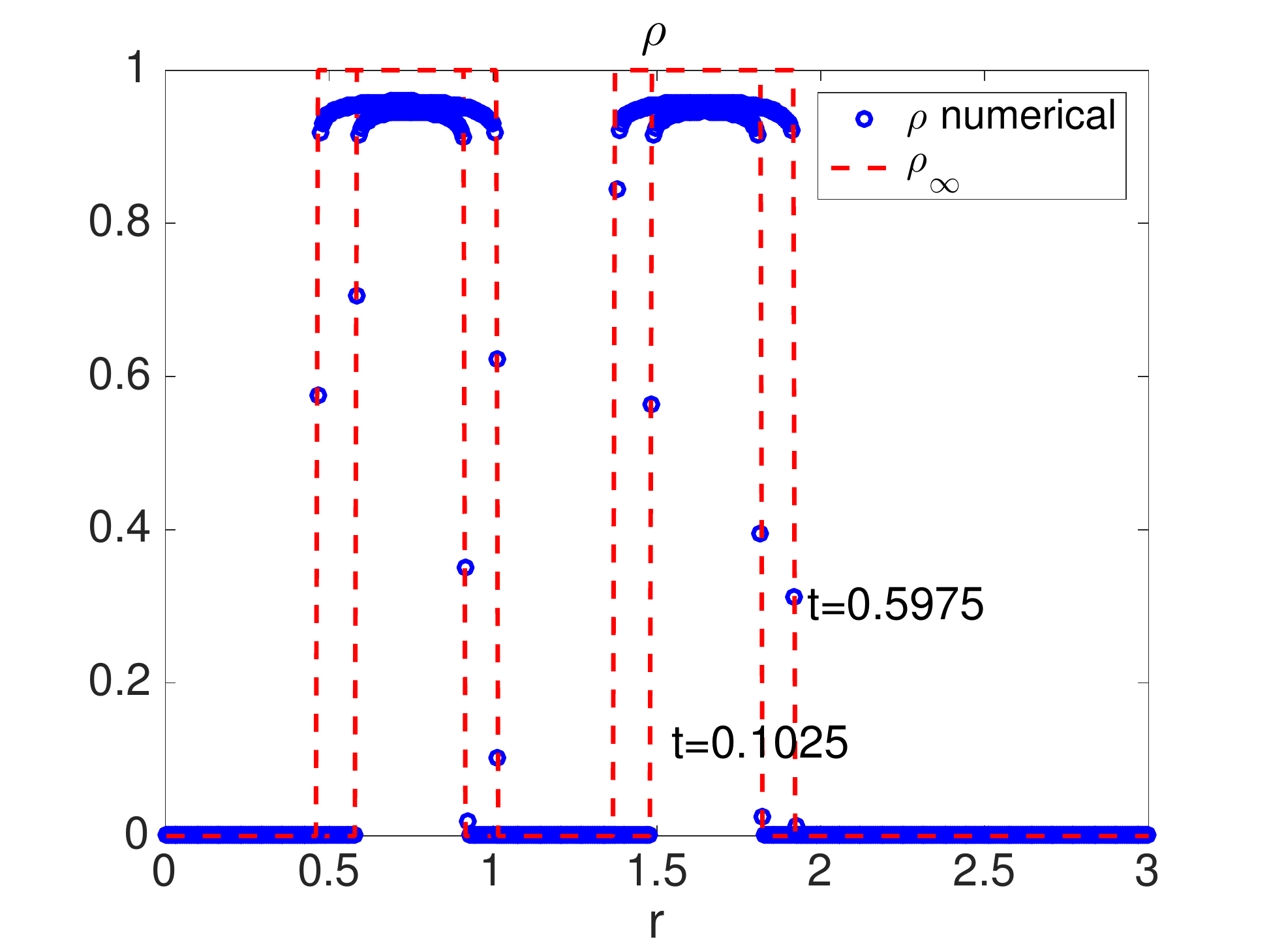}
\includegraphics[width = 0.48\textwidth]{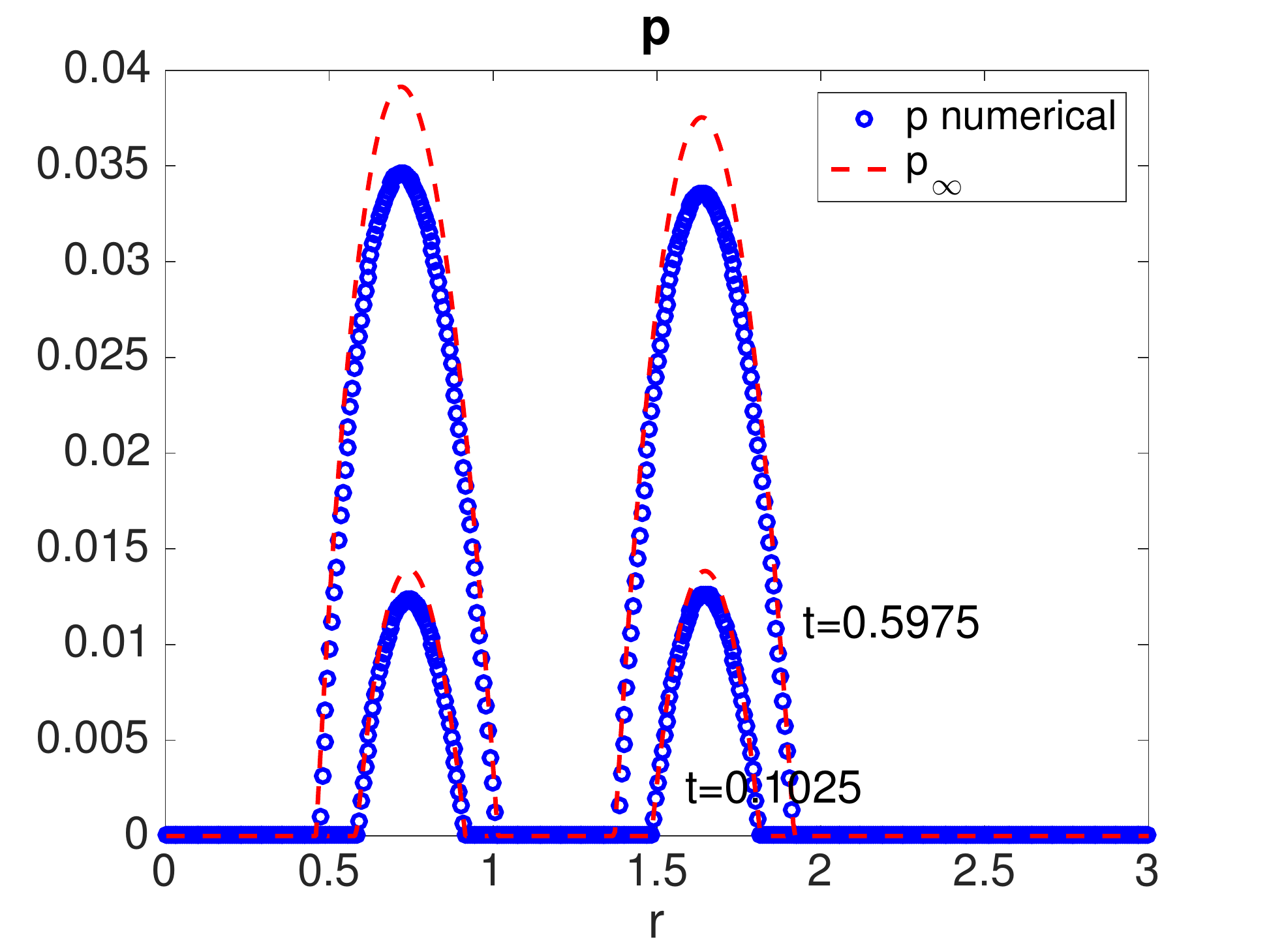}
\caption{The same example as in Fig.\ref{fig:2ann} but with finer mesh $\Delta r =0.00625$, and $\Delta t=3.125e-4$.  }
\label{fig:1ann}
\end{figure}

\subsection{Proliferating, quiescent and deal cells model}
In this section, we consider a biologically more realistic model \cite{BLM,Pbook}. Let $\rho_P(x,t)$, $\rho_Q(x,t)$, and $\rho_D(x,t)$ be the cell densities for proliferating, quiescent and dead cells, then the cell movements obey:
\begin{eqnarray*}
&& \partial_t \rho_P + \partial_x ( u \rho_P) = G(\rho_P, \rho_Q, c) - a \rho_P + b \rho_Q \,,
\\ && \partial_t \rho_Q + \partial_x (u \rho_Q) = a \rho_P - b\rho_Q - d\rho_Q  \,, 
\\ && \partial_t \rho_D + \partial_x (u \rho_D) = d \rho_Q - \mu \rho_D \,,
\\ && u = - \frac{m}{m-1} \partial_x \rho^{m-1}, \qquad \rho = \rho_P + \rho_Q + \rho_D\,,
\end{eqnarray*}
where $a$, $b$, and $\mu$ are three constants, and $c$ is again the nutrient that satisfies \eqref{eqn:c0}. To solve it numerically, we first reformulate it into a prediction-correction framework, i.e.,
\begin{align}\label{eqn:PQD}
&\left\{\begin{aligned}
& \p_t u = m \partial_x \left[  \rho^{m-2} (\p_x (\rho u) - G + \mu \rho_D) \right]\,,\\
&\p_t \rho_P+\p_x (u \rho_P )= G(\rho_P, \rho-\rho_P - \rho_D, c) - a \rho_P + b (\rho-\rho_P-\rho_D) \,,\\
& \p_t \rho_D + \p_x (u \rho_D)= d (\rho- \rho_P - \rho_D) - \mu \rho_D\,, \\
&\p_t \rho  + \p_x (u \rho) = G(\rho_P, \rho-\rho_P - \rho_D, c) - \mu \rho_D\,. \\
\end{aligned}
\right.
\\
& \quad  u(x,t)=-\f{m}{m-1}\p_x \rho^{m-1}(x,t)\,,	
\end{align}
and then discretize it in the same as in Section 4. Here we write an equation for the total density $\rho$ in place of $\rho_Q$ so that the equation for $u$ and $\rho$ is a reminiscent of the single species model. 

Our first test is when $c$ takes on an {\it in vitro} model \eqref{eqn:vitro1D}. We choose $a = b=d=1$, and $\mu=0$, and let $G(\rho_P, \rho_Q, c) = c\rho_P$. The initial data is taken as 
\begin{equation}\label{IC:PQD}
\rho_P = (1-\text{cosh}(x)/\text{cosh}(1.025)) \chi_{-1.025\leq x\leq 1.025}, \quad  \rho_Q = \rho_D = 0;
\end{equation}
and the computational domain is set as $[-8,8]$. In Fig.~\ref{fig:PQD-vitro}, we plot different cell densities at various times $t=0, ~ 2, ~ 4$. One can observe that the total density marches outside just like the single species case, but the proliferate and quiescent cell densities decreases in the center due to the lack of nutrient, which in turn leads to the increase of dead cells in the center. 

\begin{figure}[!ht]
\centering
\includegraphics[width = 0.31\textwidth]{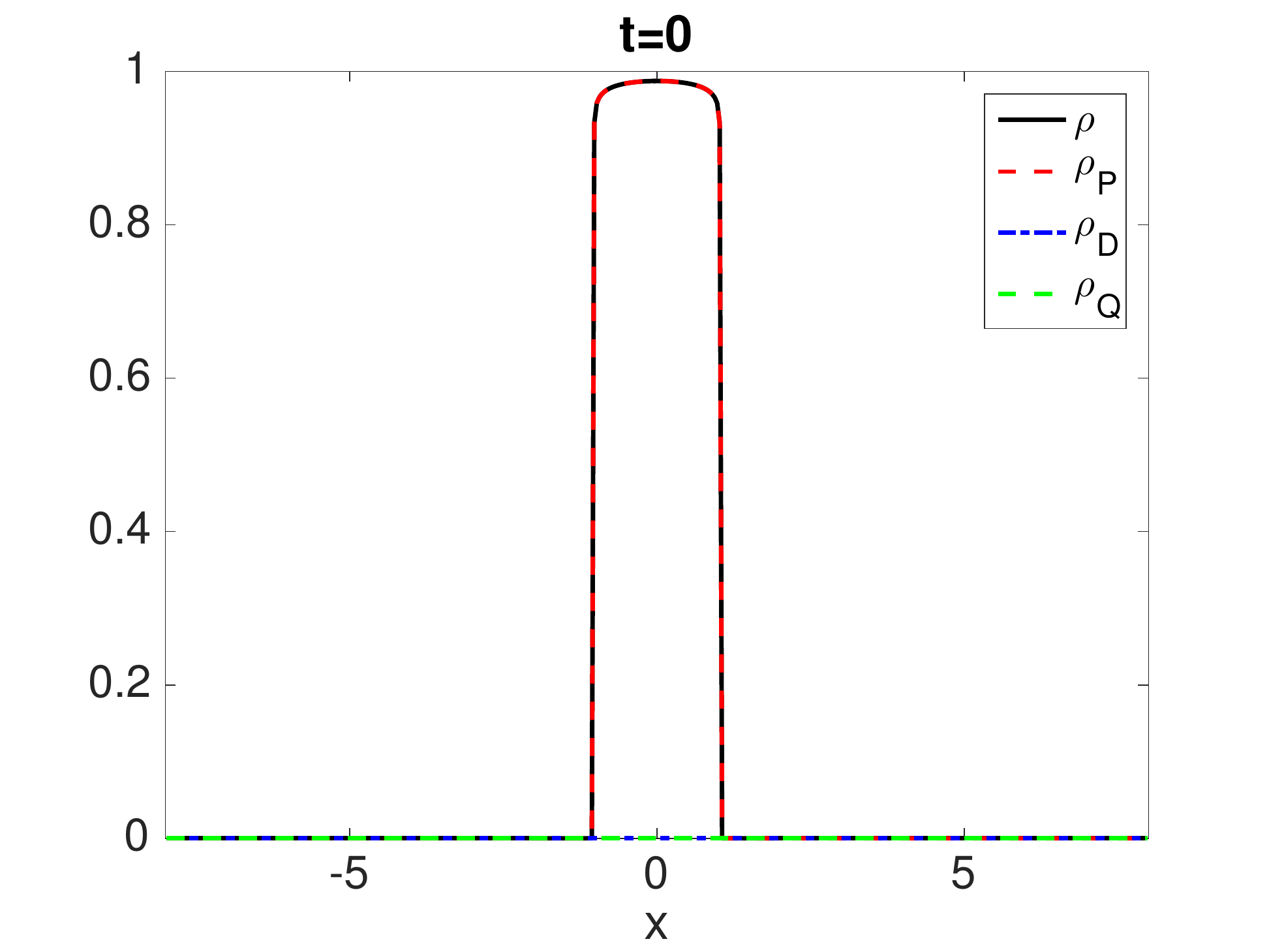}
\includegraphics[width = 0.31\textwidth]{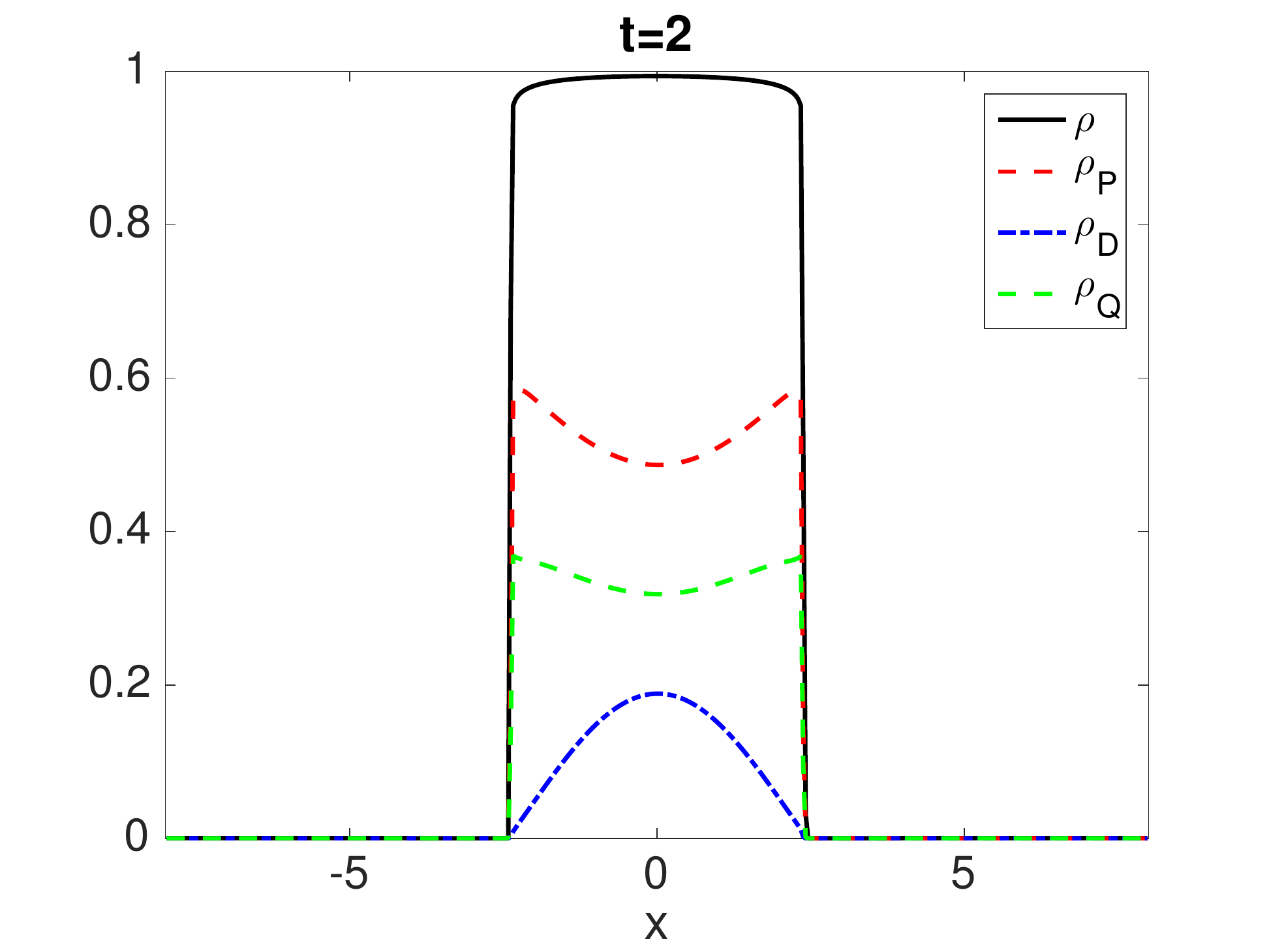}
\includegraphics[width = 0.31\textwidth]{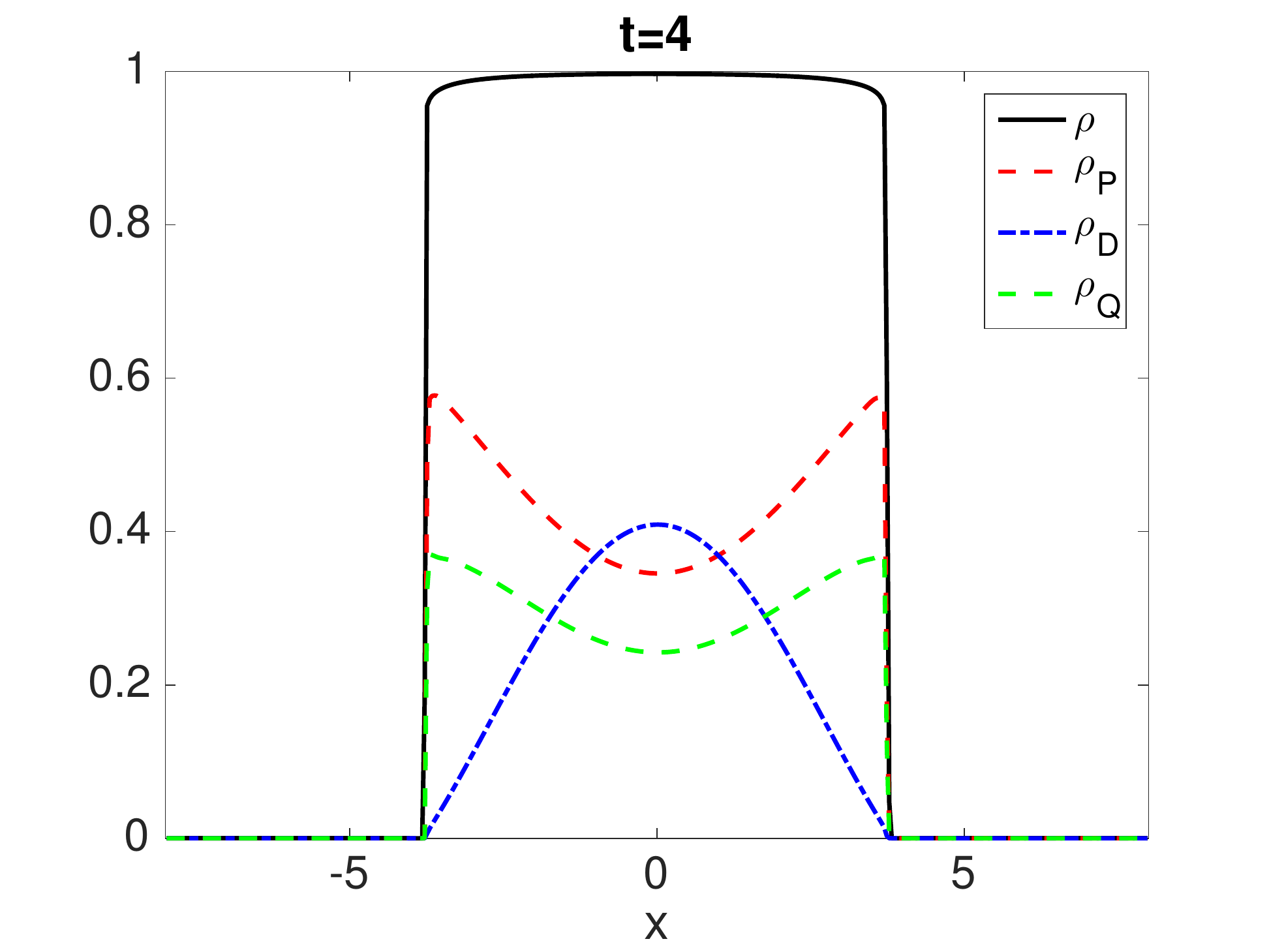}
\caption{Proliferating, quiescent and deal cells model with {\it in vitro} environment. From left to right: plots of $\rho$, $\rho_P$, $\rho_Q$, $\rho_D$ at time $t=0$ (left), $t=2$ (middle) and $t=4$ (right). Here $\Delta x = 0.04$, $\Delta t = 0.002$, $m=80$. }
\label{fig:PQD-vitro}
\end{figure}

Next we consider an {\it in vivo} model for the nutrient \eqref{eqn:vivo1D}. In this case, let $a=2$, $b=d =1$, $\mu=0$, and $G(\rho_P, \rho_Q, c) = \rho_P c$. The initial data is taken the same as \eqref{IC:PQD}, where $x\in[-8,8]$. The plots of different cell densities along time in displayed in Fig.~\ref{fig:PQD-vivo}, wherein similar trend as in the {\it in vitro} case is observed. However, there noticeable differences in the shape of dead cells, the front moving speed, and the relationship between the proliferate cell and quiescent cells. 

\begin{figure}[!ht]
\centering
\includegraphics[width = 0.31\textwidth]{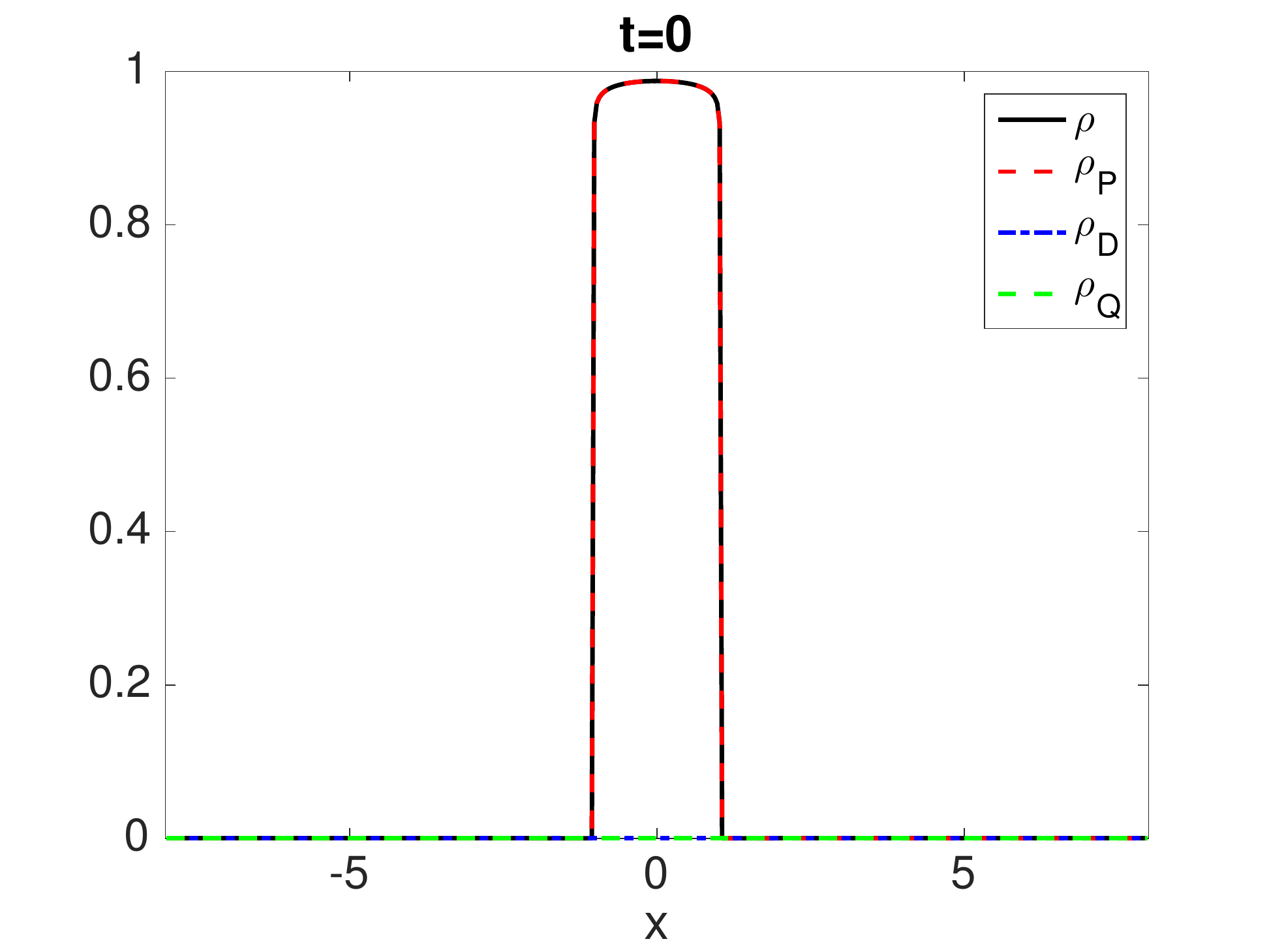}
\includegraphics[width = 0.31\textwidth]{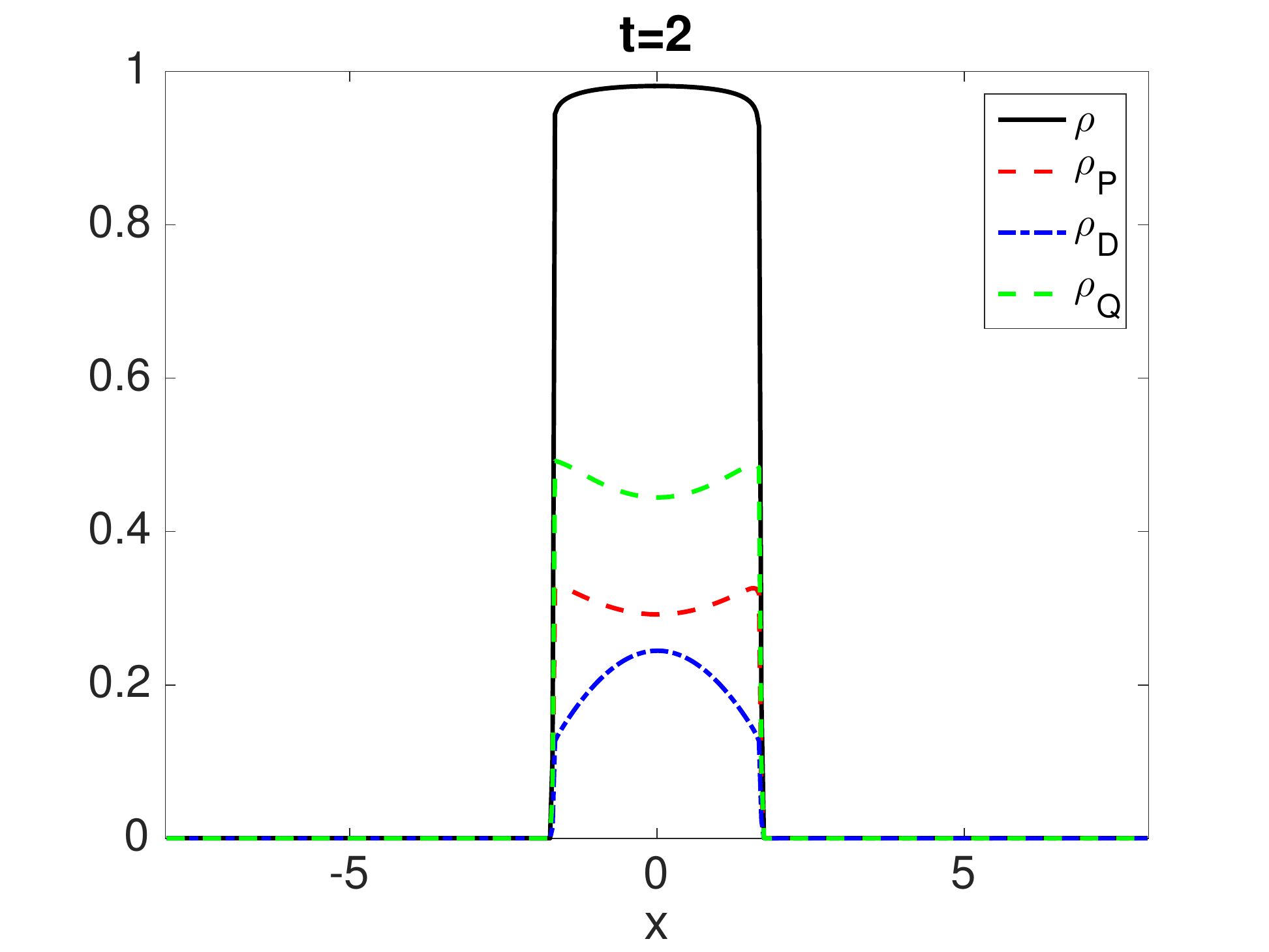}
\includegraphics[width = 0.31\textwidth]{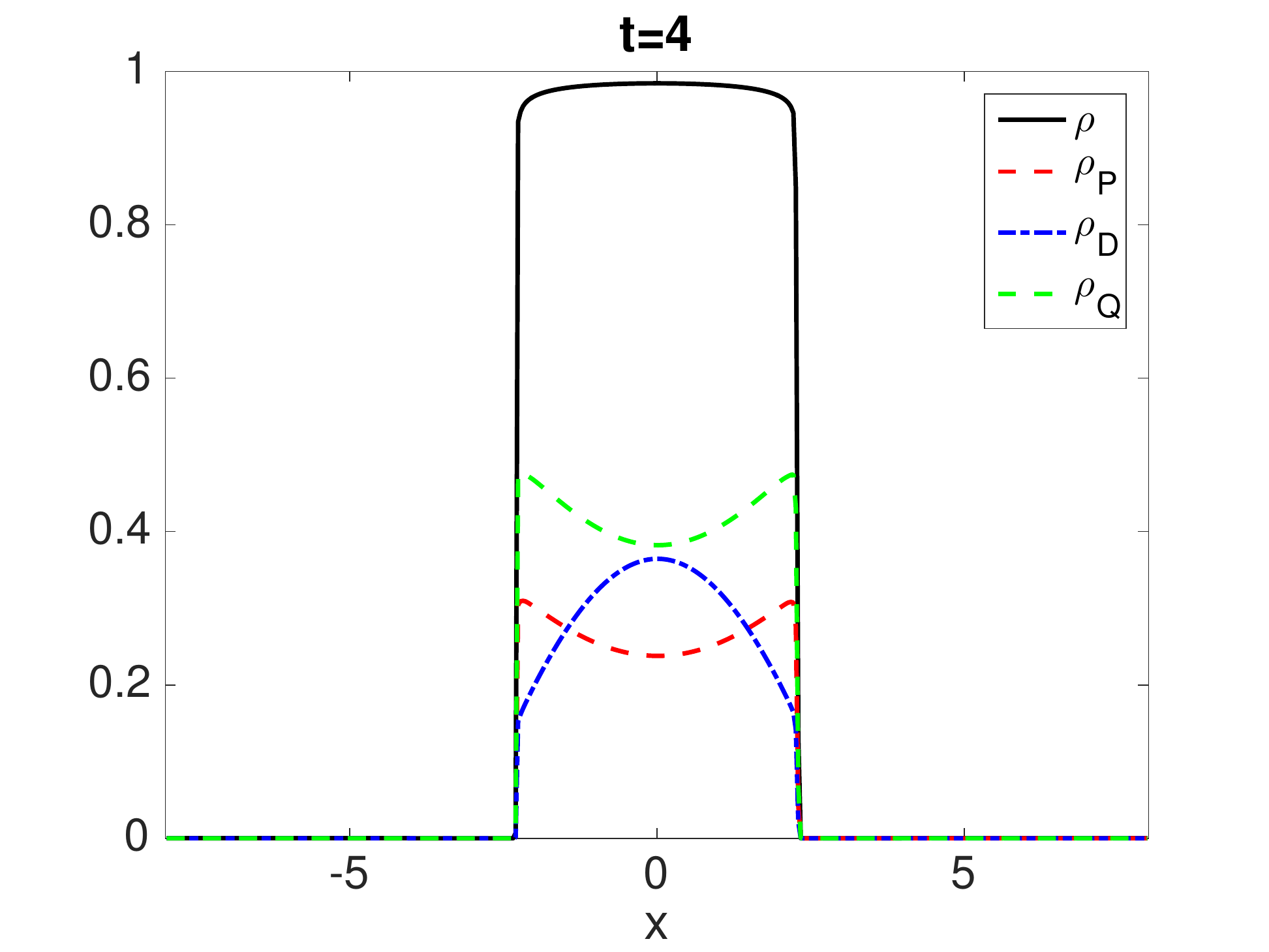}
\caption{Proliferating, quiescent and deal cells model with {\it in vitro} environment. From left to right: plots of $\rho$, $\rho_P$, $\rho_Q$, $\rho_D$ at time $t=0$ (left), $t=2$ (middle) and $t=4$ (right). Here $\Delta x = 0.04$, $\Delta t = 0.002$, $m=80$. }
\label{fig:PQD-vivo}
\end{figure}

In the third and forth tests, we change the coefficient $a$, $b$, $d$ and $\mu$. For the {\it in vitro} case, let $a =1$, $b=2$, $d=0.5$, $\mu=0$, and plot the solution at time $t=4$ (Fig.~\ref{fig:PQD22} upper). For the {\it in vivo} case, choose $a =2$, $b=1$, $d=0.5$, $\mu=0$ (Fig.~\ref{fig:PQD22} lower). In both examples, the growth rate of the dead cells is smaller than the previous cases, and lead to a boundary separation of the cells. When time is large enough, a necrotic core appears in the center. 
\begin{figure}[!ht]
\centering
\includegraphics[width = 0.45\textwidth]{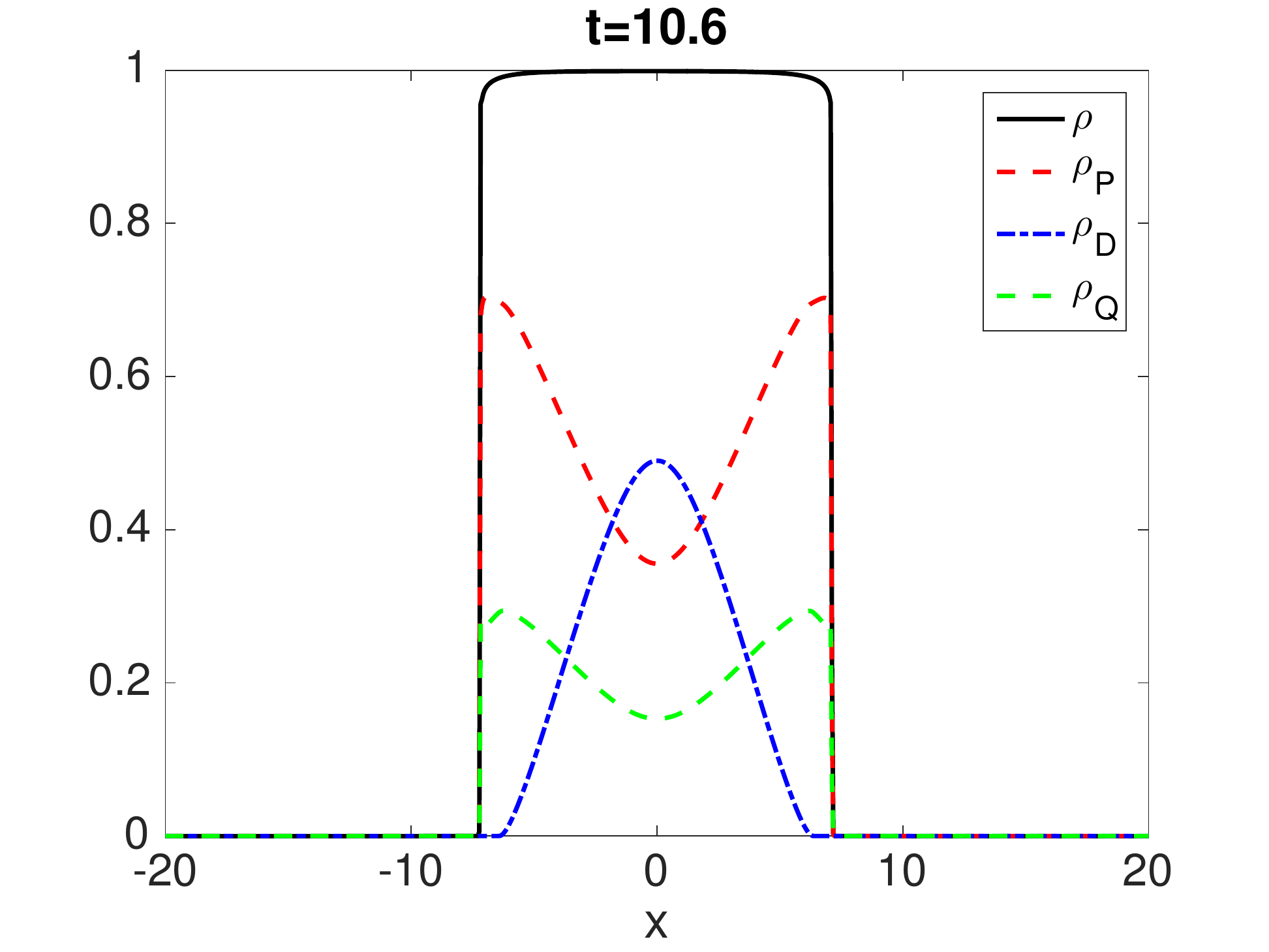}
\includegraphics[width = 0.45\textwidth]{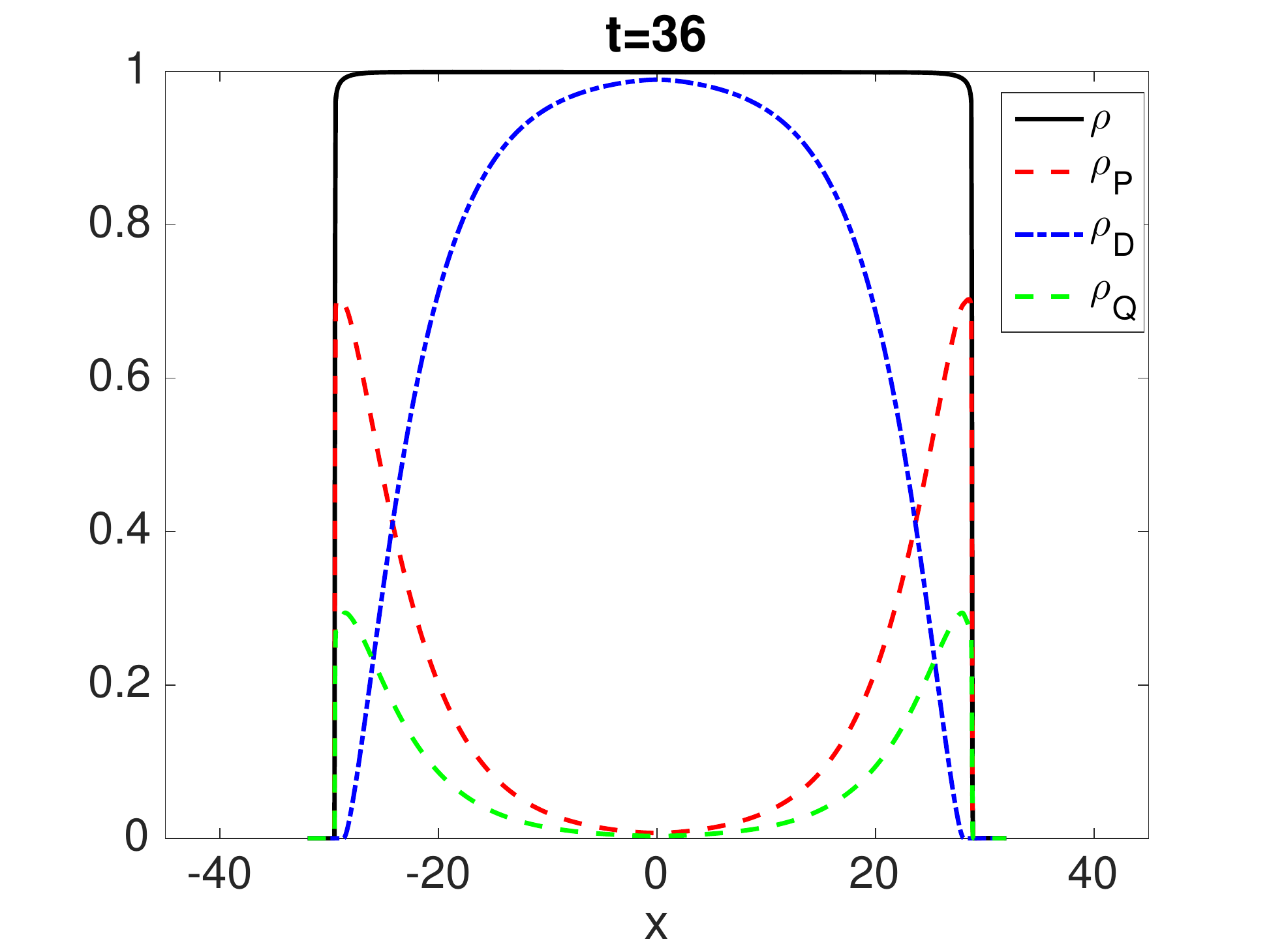}
\\
\includegraphics[width = 0.45\textwidth]{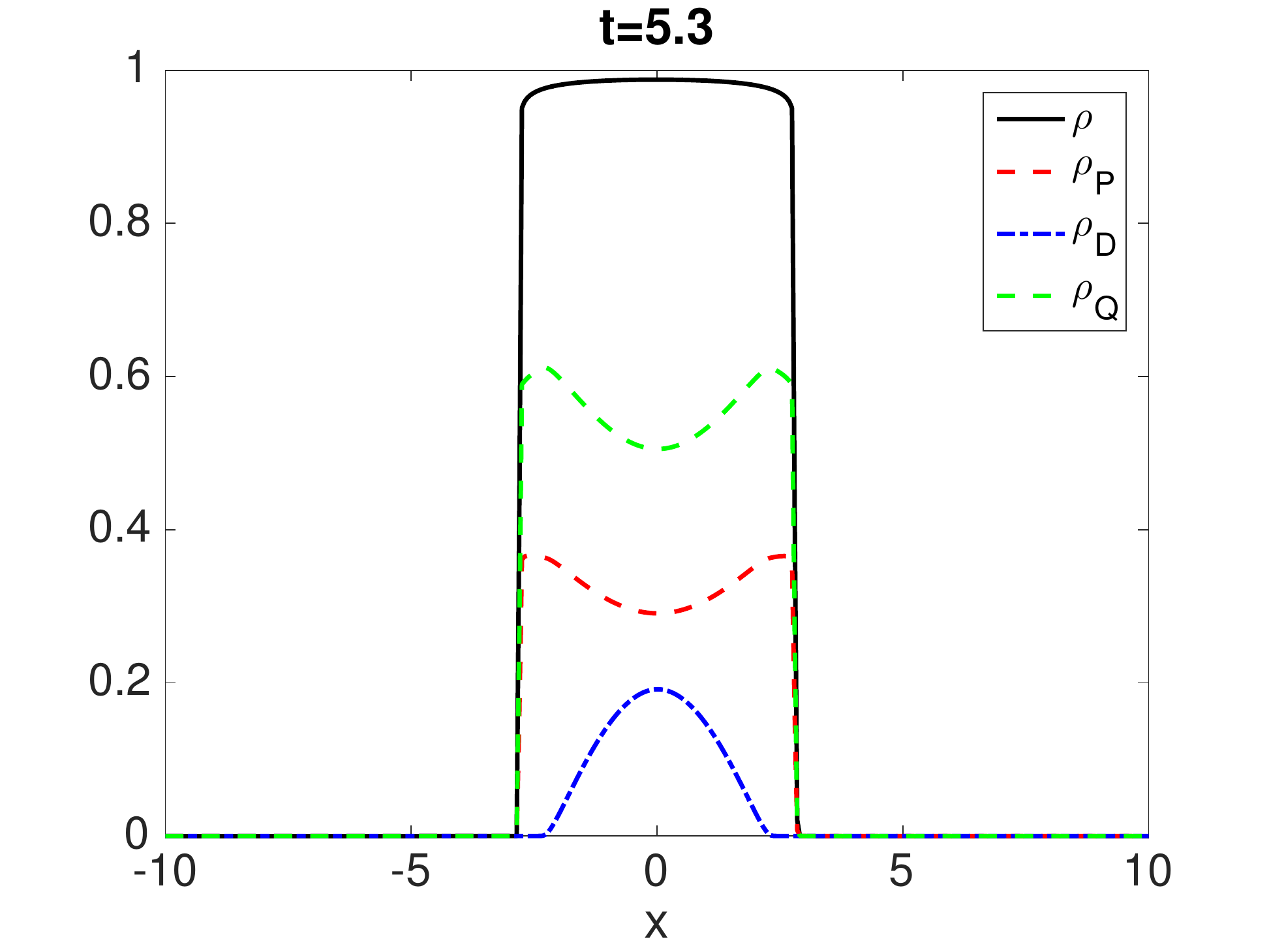}
\includegraphics[width = 0.45\textwidth]{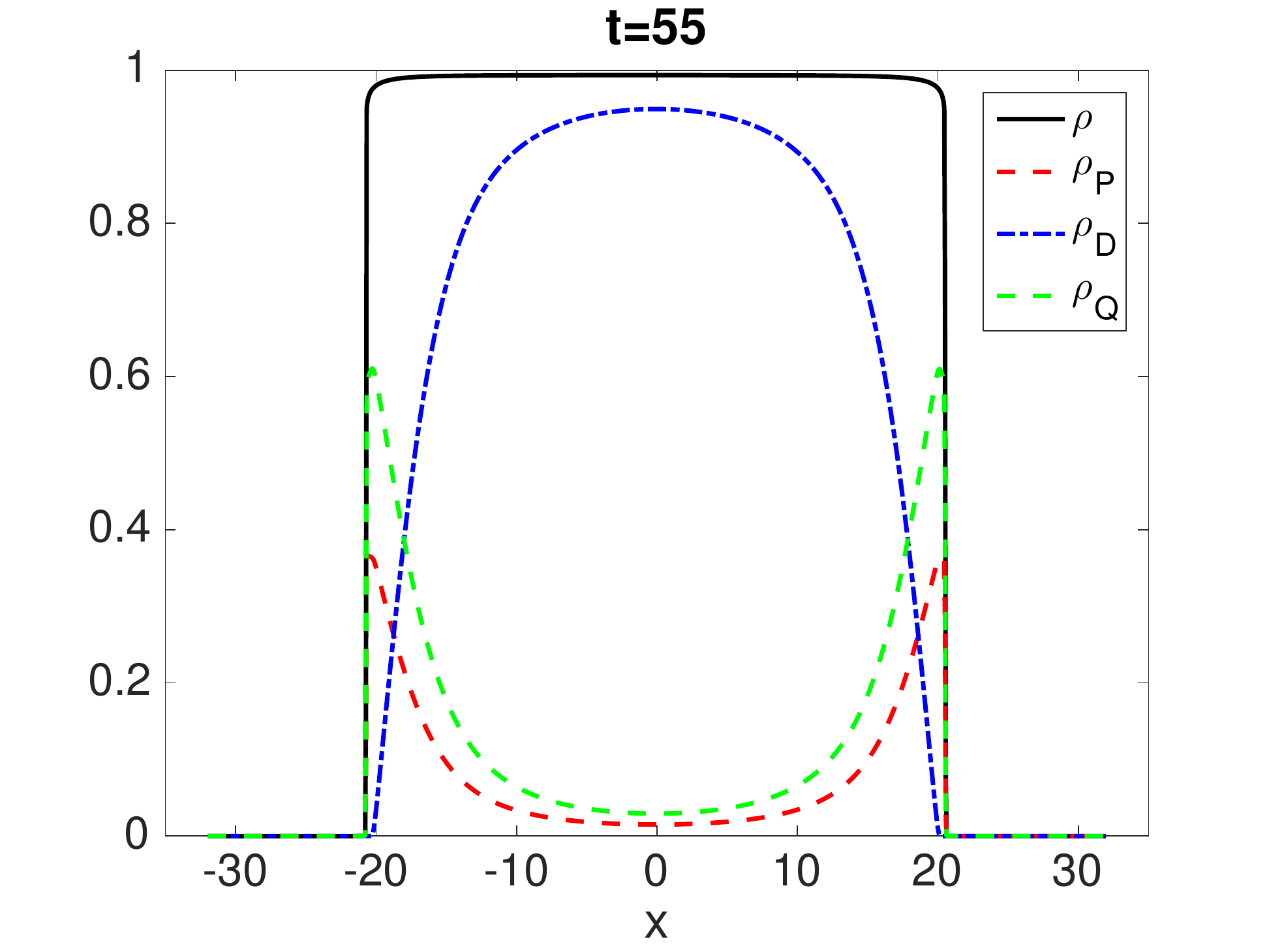}
\caption{Proliferating, quiescent and deal cells model with $\Delta x = 0.053$, $\Delta t = 0.0027$ at time $t=4$. Upper: {\it in vitro} environment. lower: {\it in vivo} environment. Here $m=80$.}
\label{fig:PQD22}
\end{figure}

\subsection{General 2D model}
At last, we conduct two tests in 2D with two different initial conditions. The nutrient is set to be a constant, i.e., $c\equiv 1$, $G(c) \equiv 1$. The computational domain is chosen as $(x,y)\in [-2,2]\times [-2,2]$, and the mesh is discretized with $\Delta x = \Delta y = 0.1$, $\Delta t = 0.005$. 

The first initial condition we considered is 
\begin{equation} \label{IC:2D-square}
\rho(x,y,0) = \left\{ \begin{array}{cc} 0.99 & (x,y)\in[0,0.5]\times[0,0.5] ~ \text{or} ~ [-0.6,-0.2]\times[-0.2,0.8]    \\ 0 & \text{otherwise} \end{array} \right. \,.
\end{equation} 
and evolution of $\rho(x,y)$ is plotted at different times in Fig.~\ref{fig:2D-square}.
\begin{figure}[!ht]
\centering
\includegraphics[width = 0.45\textwidth]{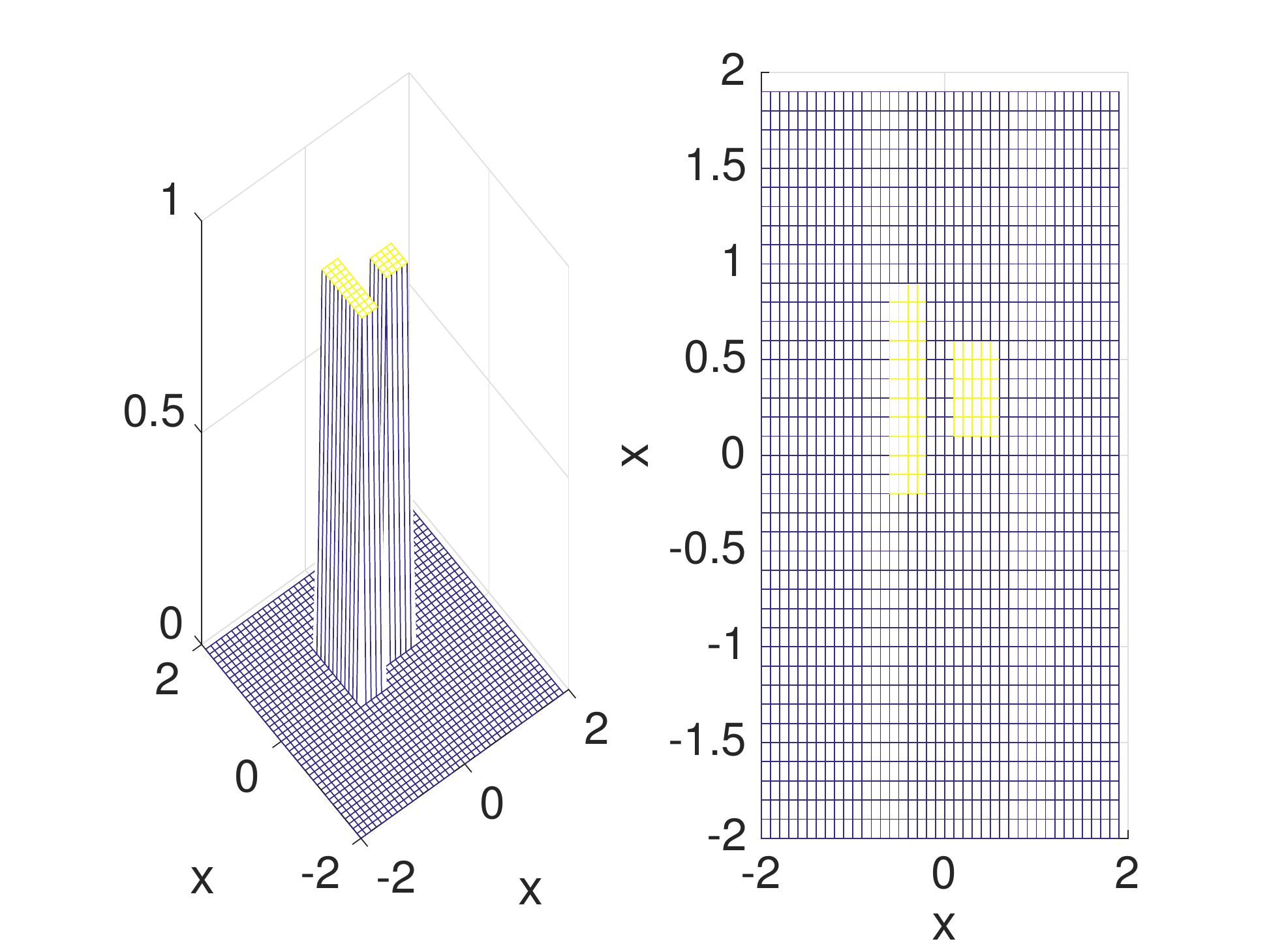}
\includegraphics[width = 0.45\textwidth]{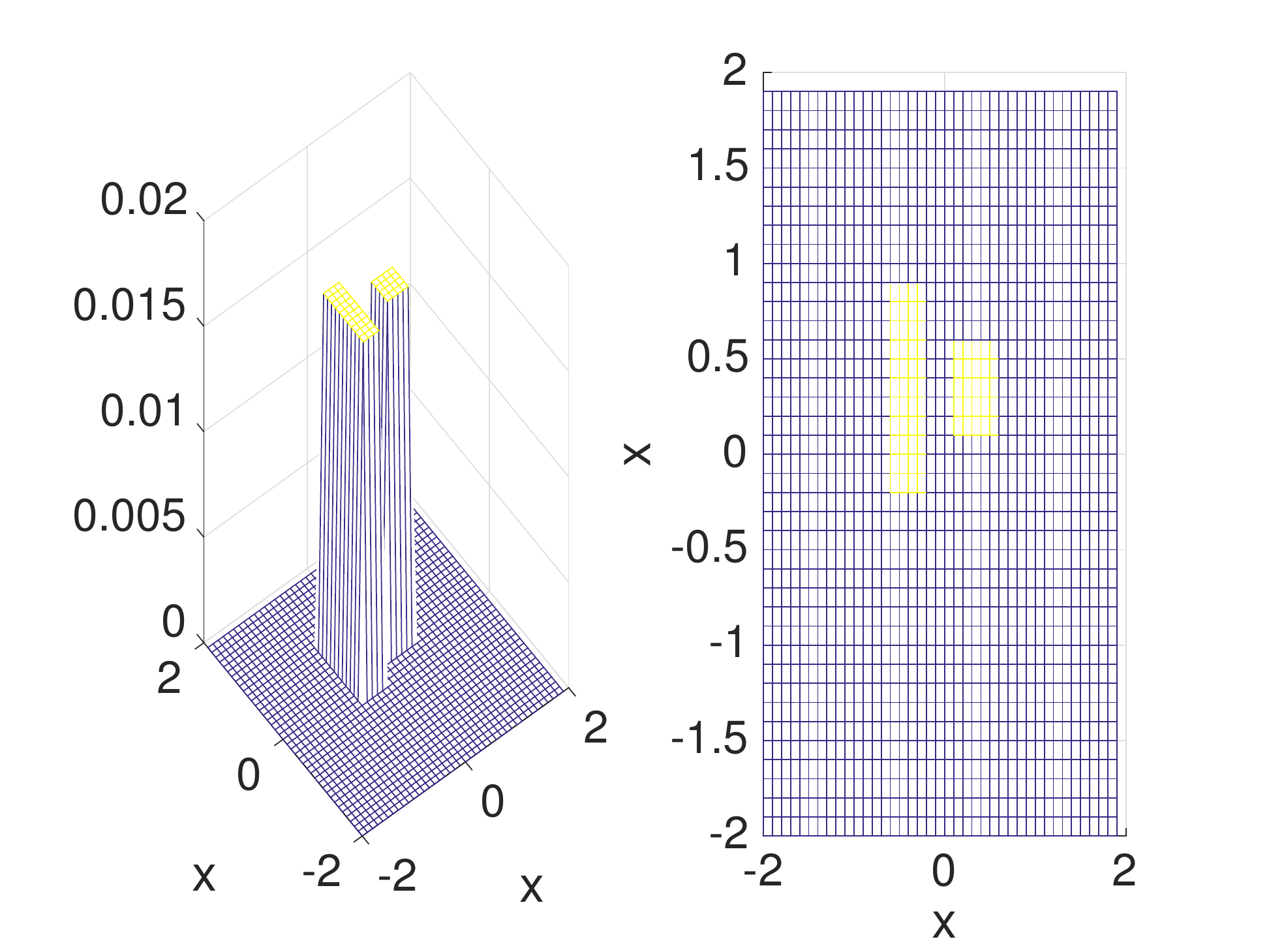}
\\
\includegraphics[width = 0.45\textwidth]{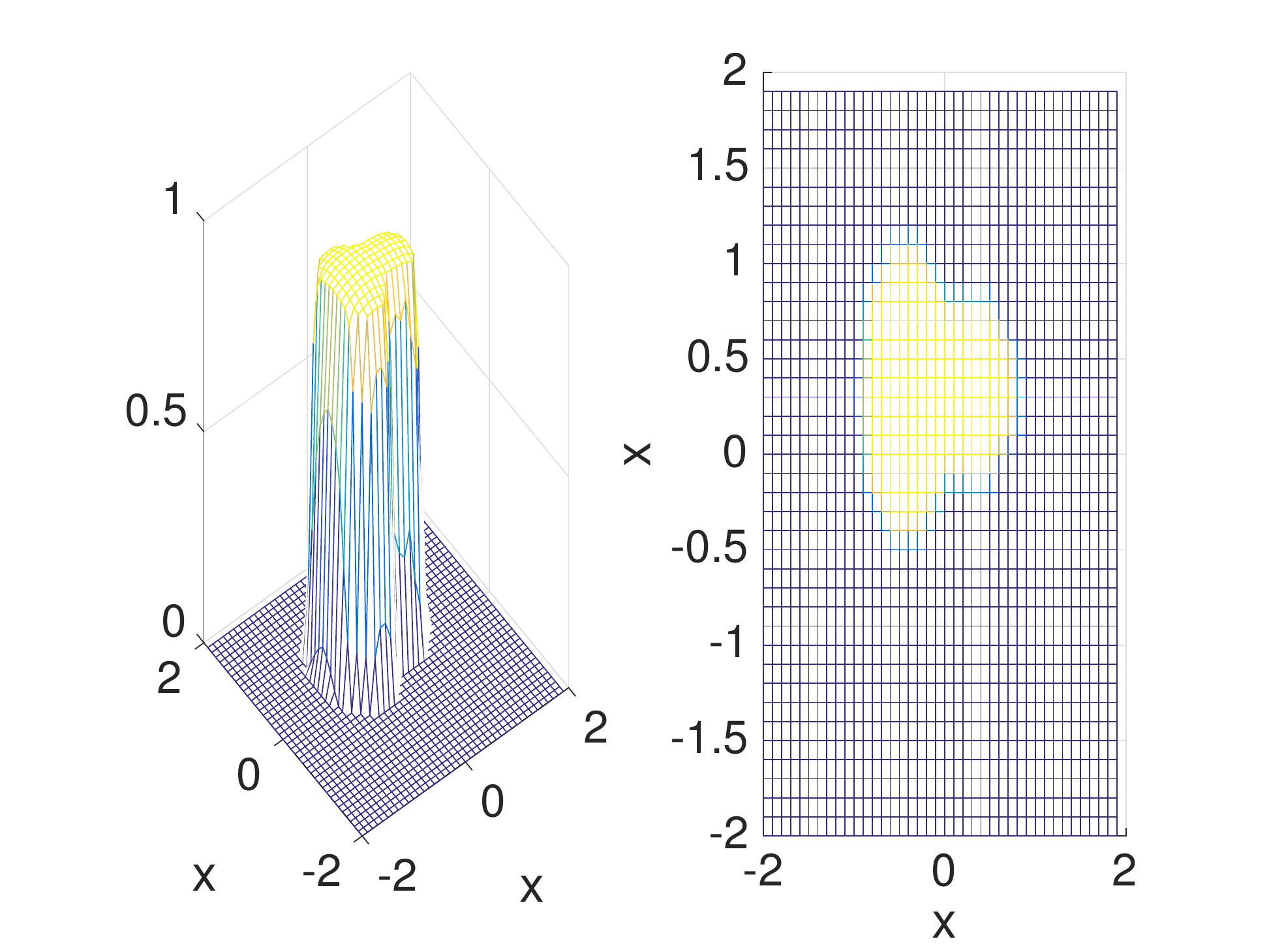}
\includegraphics[width = 0.45\textwidth]{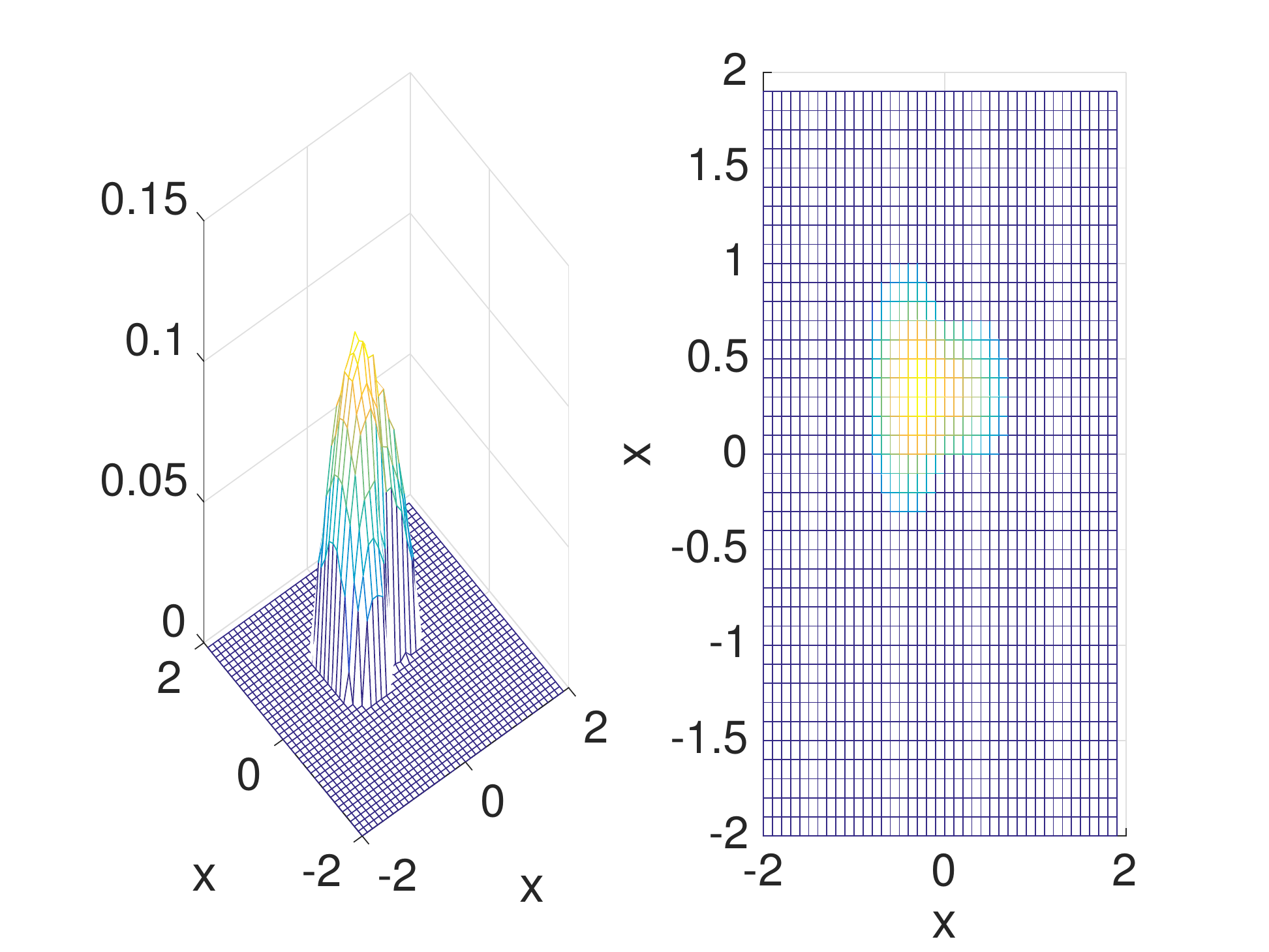}
\\
\includegraphics[width = 0.45\textwidth]{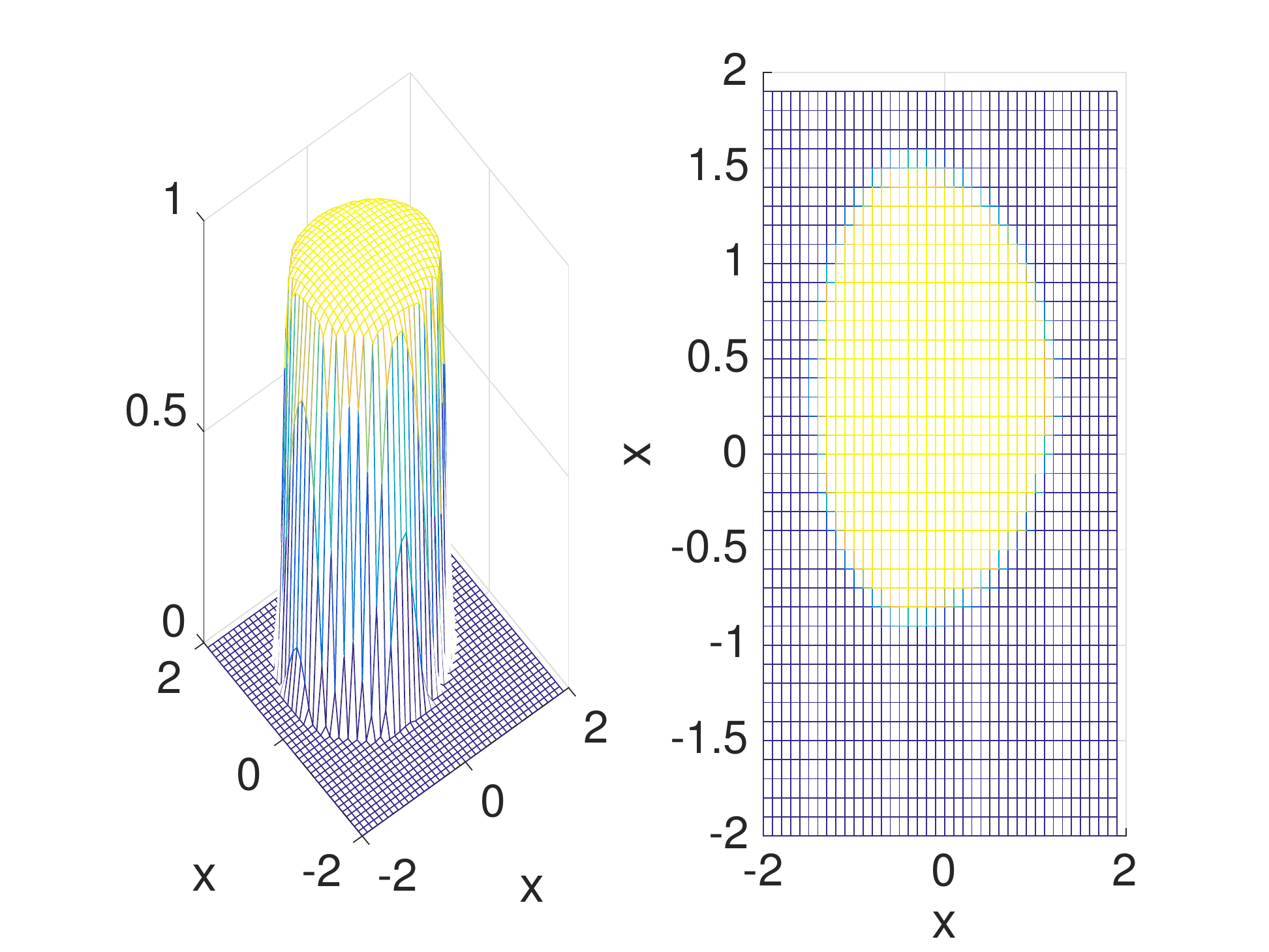}
\includegraphics[width = 0.45\textwidth]{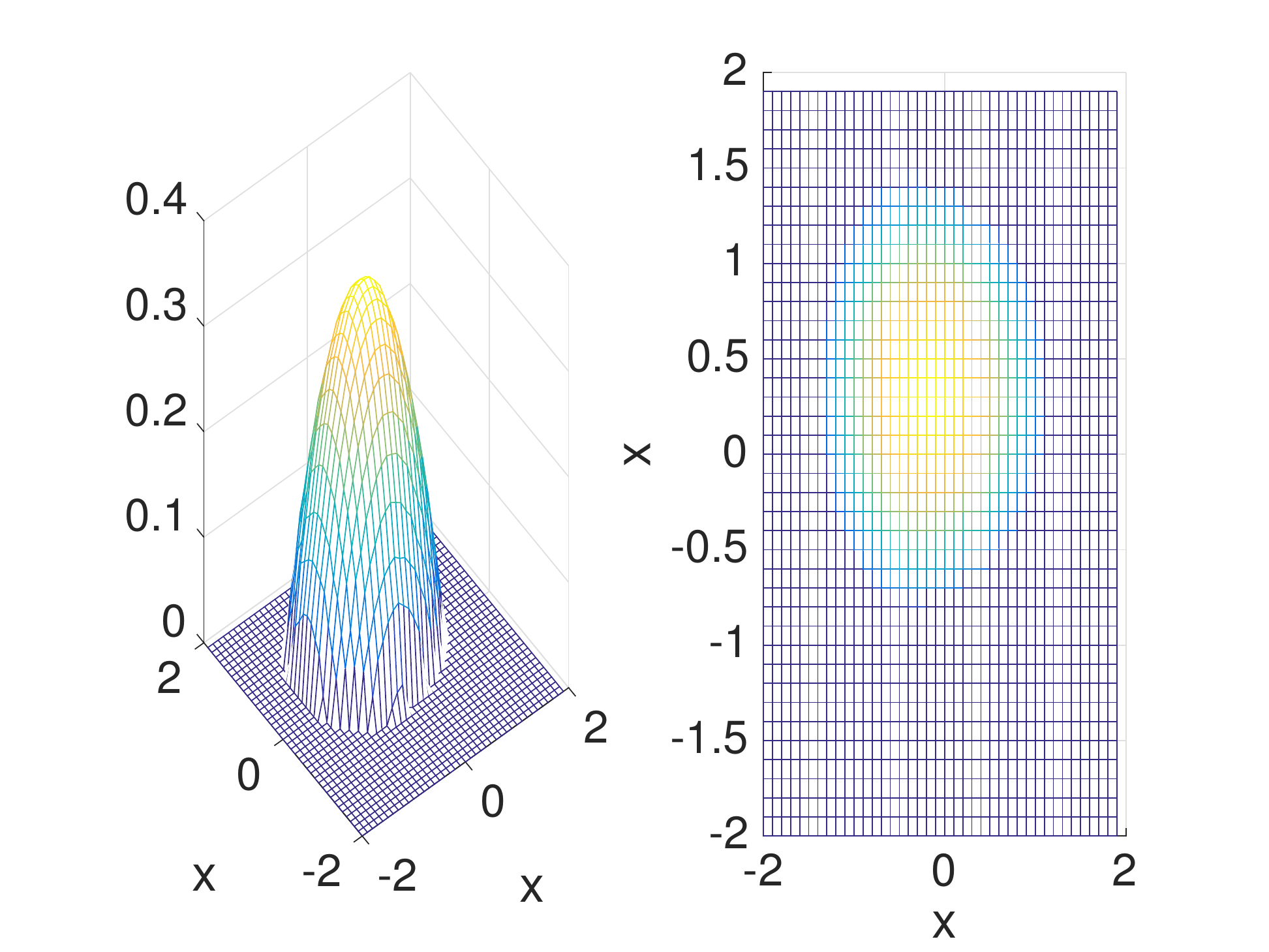}
\caption{ Plot of $\rho(x,y)$ with initial data \eqref{IC:2D-square} at different times: $t=0$ (top), $t=1$ (middle), $t=2$ (bottom). $m=40$. Left: $\rho$, right: $p$.}
\label{fig:2D-square}
\end{figure}

The second initial condition takes the form 
\begin{equation}\label{IC:2D-flower}
n(x,y,0) = \left\{ \begin{array}{cc} 0.9 & \sqrt{x^2 + y^2}-0.5-\sin(4\arctan(y/x))/2<0   \\ 0 & \text{otherwise} \end{array} \right. \,.
\end{equation}
and the evolution is gathered in Fig.~\ref{fig:2D-flower}.
These two cases indicate that, for big enough $m$, no matter what the initial condition is, the density $\rho$ will converge to a radially symmetric characteristic function.
\begin{figure}[!ht]
\centering
\includegraphics[width = 0.45\textwidth]{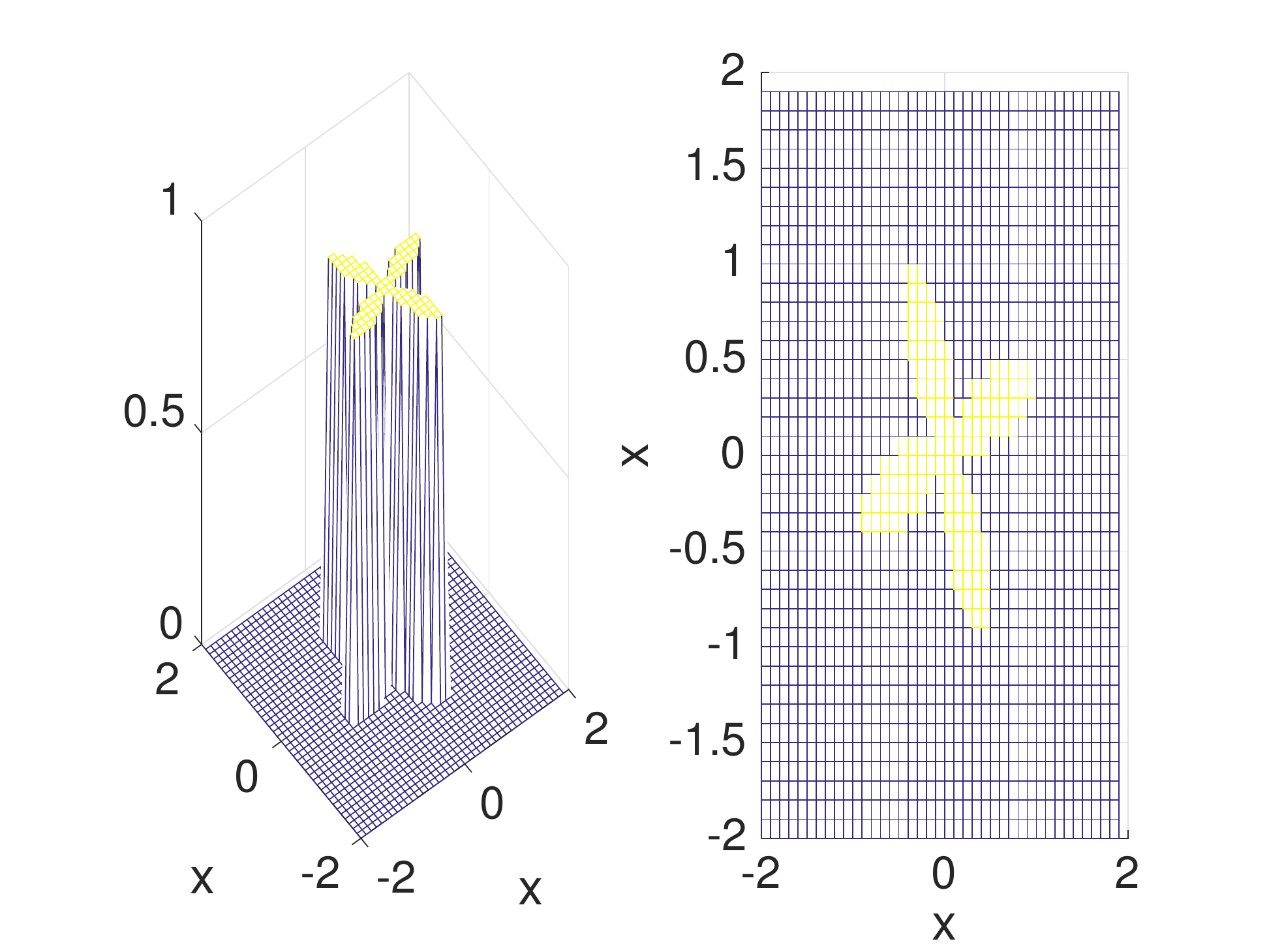}
\includegraphics[width = 0.45\textwidth]{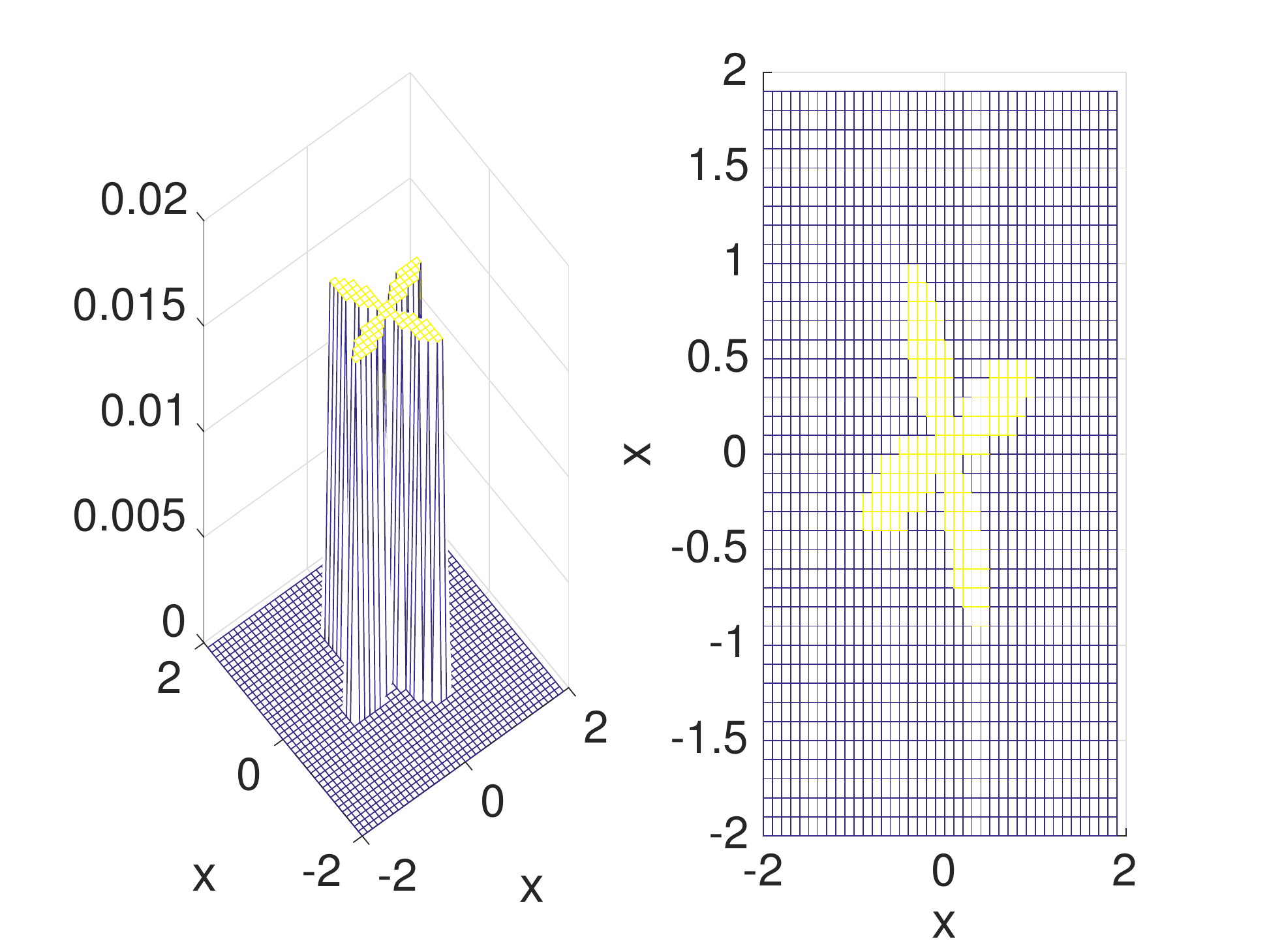}
\\
\includegraphics[width = 0.45\textwidth]{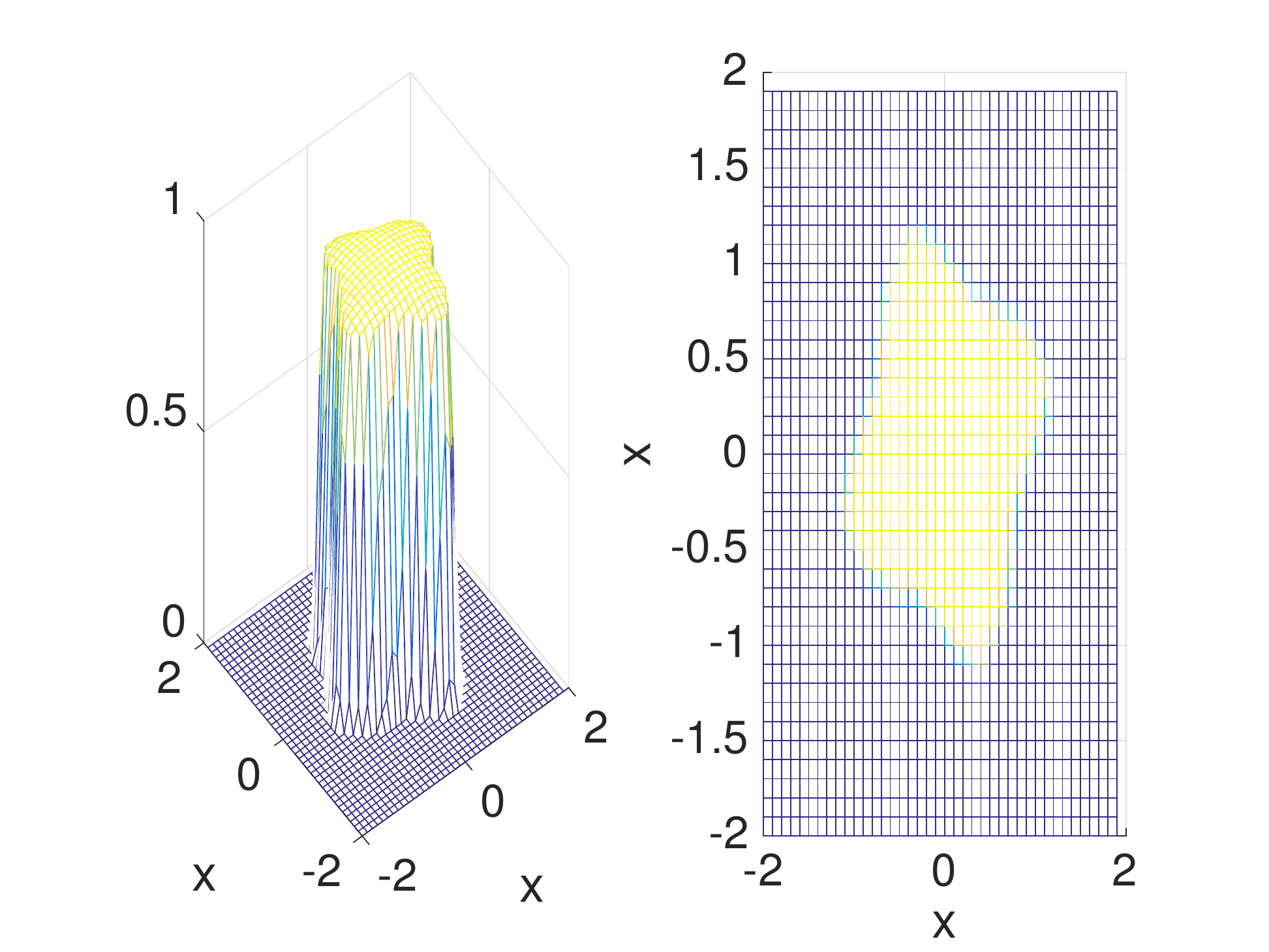}
\includegraphics[width = 0.45\textwidth]{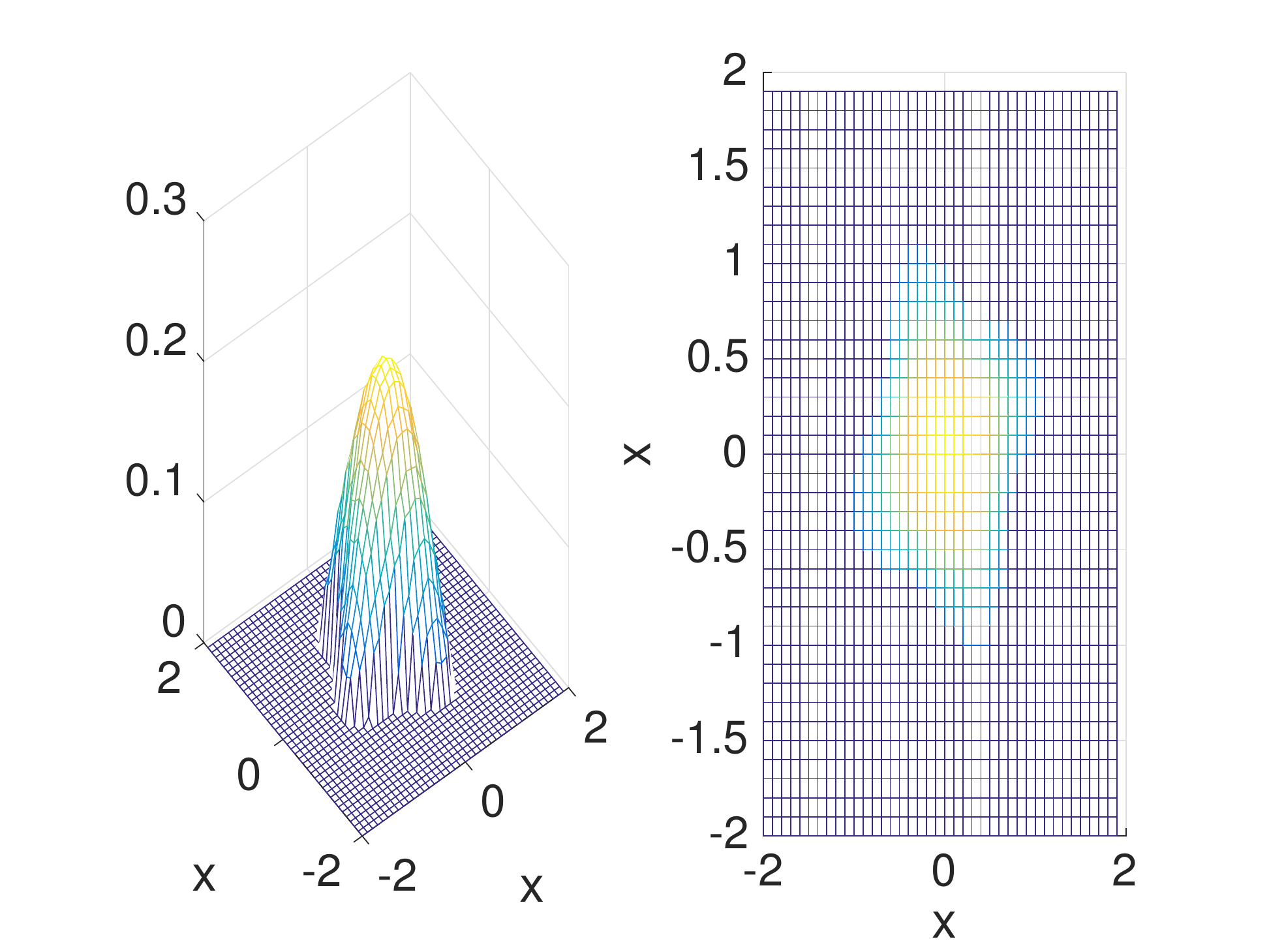}
\\
\includegraphics[width = 0.45\textwidth]{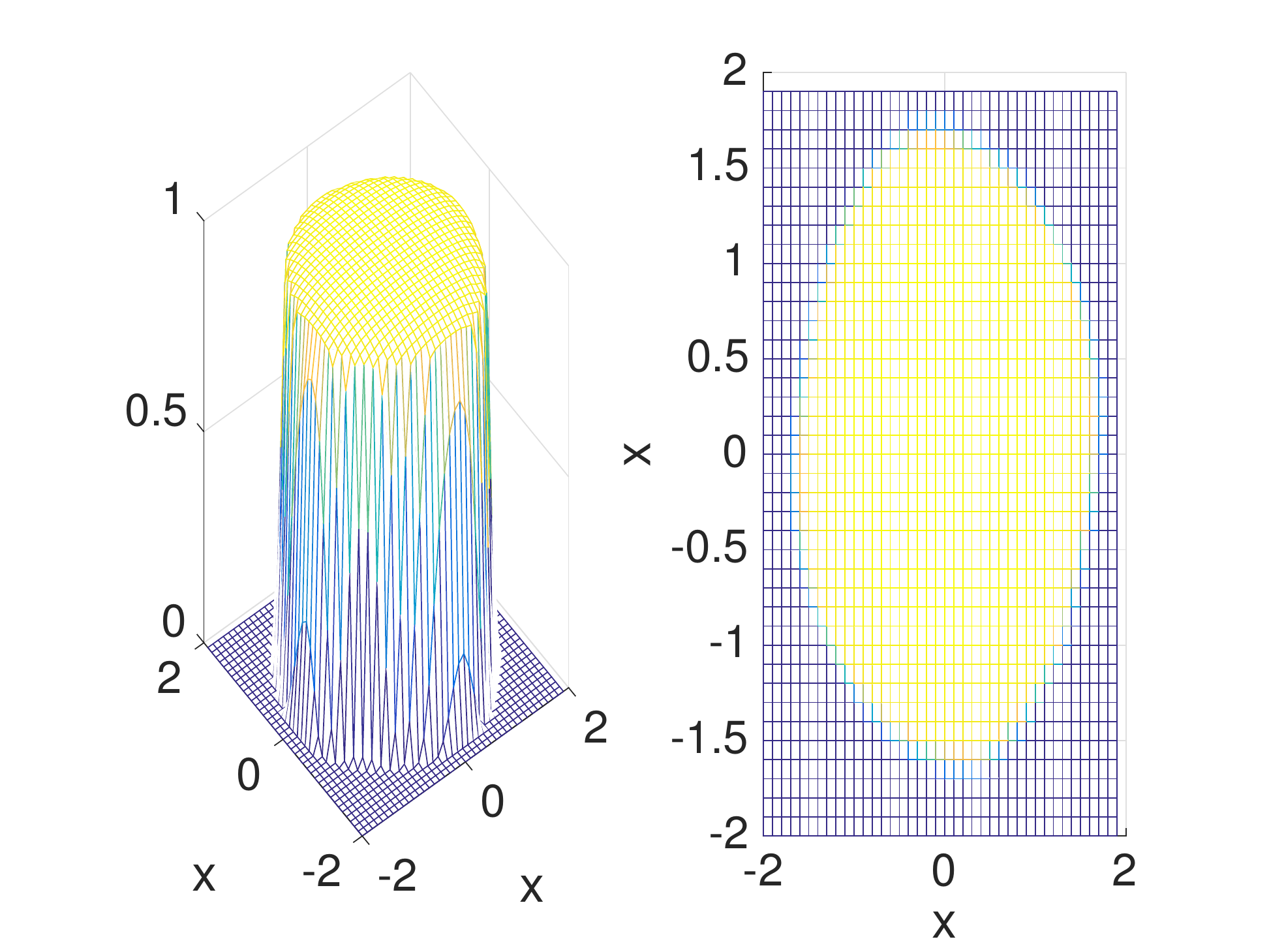}
\includegraphics[width = 0.45\textwidth]{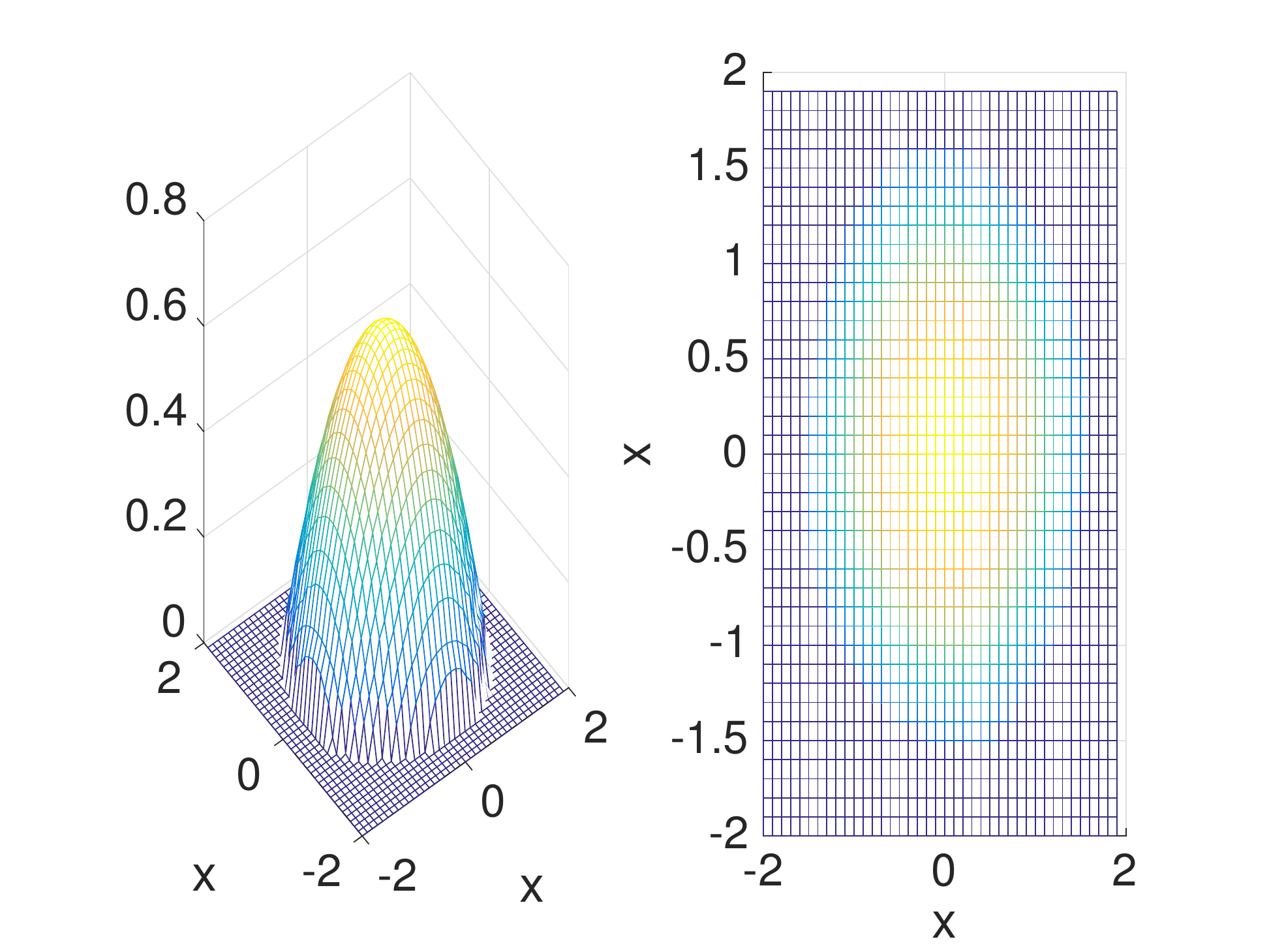}
\caption{ Plot of $\rho(x,y)$ with initial data \eqref{IC:2D-flower} at different times: $t=0$ (top), $t=1$ (middle), $t=2$ (bottom). $m=40$. Left: $\rho$, right: $p$.}
\label{fig:2D-flower}
\end{figure}

\section*{Acknowledgments}
 J. Liu is partially supported by KI-Net NSF RNMS grant No. 11-07444 and NSF grant DMS 1514826. M. Tang is supported by Science Challenge Project No. TZZT2017-A3-HT003-F and NSFC 91330203. Z. Zhou is partially supported by RNMS11-07444 (KI-Net). L. Wang is partially supported by the start up grant from SUNY Buffalo and NSF grant DMS 1620135. M. Tang and L. Wang would like to thank Prof. Jose Carrillo for fruitful discussions.

%
%

\end{document}